\theoremstyle{plain}
\newtheorem*{thm*}{Theorem}
\newtheorem*{cor*}{Corollary}
\newtheorem*{defn*}{Definition}
\newtheorem*{claim*}{Claim}
\newtheorem{theorem}{Theorem}[section]
\newtheorem{corollary}[theorem]{Corollary}
\newtheorem{lemma}[theorem]{Lemma}
\newtheorem{example}[theorem]{Example}
\newtheorem{definition}[theorem]{Definition}
\newtheorem{remark}[theorem]{Remark}
\newcommand*{\defeq}{\mathrel{\vcenter{\baselineskip0.5ex \lineskiplimit0pt
                     \hbox{\scriptsize.}\hbox{\scriptsize.}}}%
                      =}
\theoremstyle{definition}
\theoremstyle{remark}
\def\max{\mathrm{max}}
\def\N{\mathbb N}
\def\Z{\mathbb Z}
\def\card{\mathrm{Card}}
\def\In{\mathrm{in}}
\def\Ap{\mathrm{Ap}}
\def\fz{\mathbb{Z}}
\def\fn{\mathbb{N}}
\def\bn{{\mathbb N}}
\def\min#1{{\rm min}\,\{#1\}}
\def\card{{\rm card}}
\def\gbb{Gr\"obner basis\ }
\def\gb{Gr\"obner\ }
\begin{document}

\setlength{\baselineskip}{14.5pt}
\title[Symmetric and Almost Symmetric   semigroups]{Frobenius number of Almost Symmetric  numerical generalized almost arithmetic semigroups}

\author{ Marcel Morales}
\address{Institut Fourier, UMR 5582, Laboratoire de Mathématiques,
Université Grenoble Alpes, CS 40700, 38058 Grenoble cedex 9, France}
\email{ marcel.morales@univ-grenoble-alpes.fr}
 
\author{Nguyen Thi Dung}
\address{University of Information and Communication Technology, Thai Nguyen, Vietnam}
\email{ntdung.cntt@ictu.edu.vn}

\thanks{2010 {\em Mathematics Subject Classification:} Primary: 11D07, Secondary 11Gxx, 14Gxx, 13H10, 14M05, 16S36.}
\thanks{{\em Key words and phrases:} Frobenius number, Pseudo-Frobenius number, Ap\'ery set,  Gr\"obner basis, Semigroup rings, symmetric, almost symmetric semigroups, Gorenstein ring.}
\thanks{  The second author  was supported partially by the Institut Fourier, UMR 5582, Laboratoire de Mathématiques,
Université Grenoble Alpes, CS 40700, 38058 Grenoble cedex 9, France}

\begin{abstract} Let  $a,  k,h, c$ be positive integers and $d$ a non zero integer. Recall that a {\it numerical generalized almost arithmetic semigroup }  $S$ is a semigroup minimally generated by relatively prime  positive integers $a, ha+d, ha+2d, \ldots, ha+kd, c, $ that is its embedding dimension is $k+2.$ In a previous work, the authors  described the Apéry set and  a \gbb of the ideal defining $S$ under one technical assumption, the complete version will be published in a forthcoming paper. In this paper we continue with this assumption and  we describe the Pseudo Frobenius set. As a consequence we give a complete description of  $S$ when  it is symmetric or almost symmetric as well as  generalize and extend the previous results of  Ignacio García-Marco, J. L. Ram\'irez Alfons\'in and O. J. R\o dseth; we also find  a quadratic formula for its Frobenius number that  generalizes some results of J.C. Rosales, and P.A. García-Sánchez. Moreover, for given numbers $a, d, k,h, c,$ a simple algorithm allows us to determine if $S$ is almost symmetric or not and furthermore to find its type and  Frobenius number.
\end{abstract}
\maketitle
\tableofcontents
 
 \section{Introduction} 

Let  $a_0,\ldots, a_n$ be natural numbers  and $S\defeq\{k_0 a_0+\ldots + k_n a_n \mid  k_i\in \bn\}$ the  
semigroup  generated by $\{a_0,\ldots,a_n\}.$  Recall that if $a_0,\ldots, a_n$ are  relatively prime numbers then the {\it Frobenius number} of $S$, denoted  by $F(S),$ is the biggest integer that does not belong to $S$.
 Let
 $K\lbrack S\rbrack\defeq K[t^k \mid  k\in S]=K[t^{a_0}, \ldots , t^{a_n}]\subset K[t]$ be the {\it  semigroup ring} of $S$ and $R=K[x_{0},\ldots,x_{n}]$   the polynomial ring  in $n+1$ variables over $K$  graded by the weights  $\deg x_i=a_i,$ for all $i=0, \ldots, n.$ The {\it defining ideal} $I$ of $K[S]$ is defined to be  the kernel of the $K$-algebra homomorphism $\Psi \colon  R  \rightarrow K[S]$  given by $\Psi (x_i)=t^{a_i} $ for all  $i=0,\ldots, n$. We will use often the fact that $I$ is a prime ideal generated by binomials and does not contains monomials. We use the  {\it   weighted degree reverse  lexicographical order} $\prec _{w}$  on the set of monomials $[[R]]$ of the ring $R$  with  $x_0\prec \ldots\prec  x_n,$ and {\it the map} $\varphi\colon  [[R]]\rightarrow \bn$  defined by  $\varphi(M)=k_0a_0+ \ldots + k_n a_n$ for every monomial $M=x_0^{k_0}\ldots x_n^{k_n} \in [[R]]$. Let us recall that the {\it pseudo-Frobenius set} $PF(S)$ is the set of all  integer numbers  $a$ which satisfies $ a\notin S$ and $a+s\in S,$ for all $0\ne s\in S$, the number of elements of $PF(S)$ is called the {\it type} of $S$ and is  denoted by $t(S).$  Finally, the {\it Ap\'ery set}, which is defined by   ${\rm Ap}(S,a_0)\defeq\{ s\in S \mid s-a_0 \notin S\}$ plays an important role in our paper. By defining in \cite{M-D}, a monomial Ap\'ery set $\widetilde {\Ap(S,a_0)}$ with respect to $a_0$  as an algebraic analogous to the   Ap\'ery set $\Ap(S,a_0)$ and using the  order $\prec _{w}$  as well as  the map $\varphi$, we can change from studying the Ap\'ery set to studying the set of monomials in $[[R']]$   which are not in $\In(I)$, where  $R'=K[x_1,\ldots,x_n]$ and $\In(I)$ is the monomial ideal of the leading terms of $I$ with respect to  $\prec _{w}$.
 
 Let  $a,  k,h, c$ be positive integers and $d\in \fz^*$. Recall that a {\it numerical generalized almost arithmetic semigroup} (AAG-semigroup for short) is a semigroup minimally generated by relatively prime positive integers $a, ha+d, ha+2d, \ldots, ha+kd, c, $ that is its embedding dimension is $k+2.$ Our goal is to describe all properties of an AAG-semigroup in terms of a continuous fraction, as an extension of our previous works in \cite{Mo1}, \cite{Mo2}, \cite{M-D}. It  was given in \cite{M-D} a \gbb of the ideal defining $S$ under one assumption and the complete case will be published in a forthcoming paper. In this paper we continue the work of \cite{M-D}, with the same  assumption  we can describe the Pseudo Frobenius set (see Theorem \ref{tPSF}). As a consequence we  give a complete description of AAG-semigroups that are symmetric or almost symmetric (see Theorems \ref{tGor}, \ref{almostsym0}, \ref{almostsym1}, \ref{almostsym2}) which  improves and extends  the results of \cite{GRR}. In particular we prove that if $S$ is almost symmetric then its  type is at most the embedding dimension minus 1. Another interesting point is that if $S$ is almost symmetric then the Frobenius number is given by a linear or quadratic formula in terms of $a, d, k,h, c$  and $t(S)$. Moreover, a simple algorithm using the solutions of some quadratic equations allows us to decide if an AAG-semigroup is almost symmetric  or not and furthermore we can find its type and  Frobenius number. This result  extends and generalizes all the results of \cite{R-G}.

The algorithms  presented here are the extensions of the previous works  by the author in  \cite{Mo1}, \cite{Mo2} and can be downloaded in  http: //www-fourier.univ-grenoble-alpes.fr/~morales/.

\section{Preliminaries}
 

Denote by $\Z$  and $\N$ the set of integers and nonnegative integers respectively.
Given $n\geqslant 1$
and $a_0,\ldots,a_n\in \N$ such that $\gcd(a_0,\ldots,a_n)=1$, let $S$ be the semigroup in $\N$
$$S\defeq\{k_0 a_0+\ldots k_n a_n \mid k_i\in \bn\}.$$
The set $\N\setminus S$ is finite and $S$ is called  {\it numerical semigroup}. If $S$  is minimally generated by  $\{a_0, \ldots,a_n\}$ then   $n+1$ is called the {\it embedding dimension} of $S.$

\begin{definition} \label {d2} {\rm Let $S$ be a numerical semigroup generated by $a_0,\ldots ,a_n$.

{(i)} The number $F(S)=\max \{a\in \Z \mid a\notin S\}$ is called  the {\it Frobenius number} of $S$.

 {(ii)} We also define
$\displaystyle PF(S)\defeq\{a\in \Z\setminus S\mid  a+s \in S \text {\ if \ } s \in S \text {\ and\ } s\ne 0\}$ the pseudo Frobenius set.
An element of $PF(S)$ is called a {\it pseudo-Frobenius number} of $S$. Obviously, the Frobenius number  
is a pseudo-Frobenius number and the number of elements of $PF(S)$ is called the {\it type} of $S$, denoted by $t(S).$
 
{(iii)} The {\it Ap\'ery set } with respect to $a_0$ in  $S$  is the set
${\rm Ap}(S,a_0)=\{ s\in S \mid  s-a_0 \notin S\}.$
}
\end{definition}

 A numerical  semigroup is { symmetric} if $t(S)=1$. We will use the following property to characterize almost symmetric numerical semigroups.
 
 \begin{lemma}\label {Nari}\cite{N} Suppose $t(S)>1$. Let $PF(S)=\{f_1,\dots, f_{t(S)-1}, F(S)\}$ with $f_1<\dots<f_{t(S)-1}$. Then $S$ is almost symmetric if and only if
 $f_i+ f_{t(S)-i}= F(S)$ for $i=1,\dots,t(S)-1$.
\end{lemma}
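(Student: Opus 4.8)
The plan is to work from the standard (Barucci–Fr\"oberg) description of almost symmetry: $S$ is almost symmetric exactly when every gap $x\in\N\setminus S$ satisfies $F-x\in S$ or $x\in PF(S)$, where I abbreviate $F=F(S)$ and $t=t(S)$. Two structures drive the argument. First, equip $\Z$ with the relation $x\preceq y\Leftrightarrow y-x\in S$; then $PF(S)$ is precisely the set of $\preceq$-maximal gaps, since $f\in PF(S)$ says $f$ is a gap with $f+s\in S$ for all $0\ne s\in S$, i.e. no gap lies $\preceq$-above $f$. Second, consider the reflection $\sigma\colon x\mapsto F-x$ and the set $D\defeq\{x\in\N\setminus S : F-x\notin S\}$ of gaps whose reflection is again a gap. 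I will show that almost symmetry is equivalent to $D=\{f_1,\dots,f_{t-1}\}$ and then read off Nari's relations from the behaviour of $\sigma$ on $D$.

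I would first record two facts. (i) $\sigma$ restricts to an involution of $D$ reversing the usual order of $\Z$: if $x\notin S$ and $F-x\notin S$ then both $F-x\notin S$ and $F-(F-x)=x\notin S$, so $\sigma(x)\in D$, while $x<y$ plainly gives $F-x>F-y$. (ii) $PF(S)\setminus\{F\}\subseteq D$: for $f\in PF(S)$ with $f<F$, if $F-f\in S$ then $F-f$ is a nonzero element of $S$, whence $f+(F-f)=F\in S$, contradicting $F\notin S$. Writing $PF(S)=\{f_1<\dots<f_{t-1}<f_t=F\}$, fact (ii) gives $\{f_1,\dots,f_{t-1}\}\subseteq D$.

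Assume now that $S$ is almost symmetric. By definition each element of $D$, being a gap $x$ with $F-x\notin S$, must lie in $PF(S)$; since $F\notin D$ (as $F-F=0\in S$) this forces $D\subseteq\{f_1,\dots,f_{t-1}\}$, and combined with (ii) we obtain $D=\{f_1,\dots,f_{t-1}\}$. By (i), $\sigma$ is an order-reversing involution of the finite chain $f_1<\dots<f_{t-1}$, so necessarily $\sigma(f_i)=f_{t-i}$, i.e. $F-f_i=f_{t-i}$, which is exactly $f_i+f_{t-i}=F$ for $i=1,\dots,t-1$.

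For the converse, assuming $f_i+f_{t-i}=F$ for all $i$, the task is to rule out any gap $x\in D$ outside $\{f_1,\dots,f_{t-1}\}$; this is the step I expect to be the main obstacle, since it is where maximality of pseudo-Frobenius numbers must be exploited. Suppose $x\in D$ but $x\notin PF(S)$. Then $x$ is a non-maximal gap, so some $0\ne s\in S$ has $x+s\notin S$, and choosing a $\preceq$-maximal gap $z$ with $x+s\preceq z$ (possible since $\N\setminus S$ is finite) gives $z\in PF(S)$ and $x\prec z$. Because $x\in D$ means $F-x\notin S$, we cannot have $z=F$ (that would give $F-x\in S$), so $z=f_j$ with $1\le j\le t-1$ and $f_j-x\in S\setminus\{0\}$. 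Then $F-x$ is again a gap, and
\[
(F-x)-f_{t-j}=(F-x)-(F-f_j)=f_j-x\in S\setminus\{0\},
\]
so the gap $F-x$ lies strictly $\preceq$-above the maximal gap $f_{t-j}\in PF(S)$, a contradiction. Hence $D=\{f_1,\dots,f_{t-1}\}\subseteq PF(S)$, so every gap $x$ with $F-x\notin S$ is a pseudo-Frobenius number and $S$ is almost symmetric. Apart from this maximality argument, the proof is bookkeeping around $\sigma$ and the order $\preceq$.
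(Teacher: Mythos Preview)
The paper does not prove this lemma; it is simply cited from Nari \cite{N} as a known characterization of almost symmetric semigroups, so there is no proof in the paper to compare against.

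Your argument is correct and self-contained. You take as starting point the Barucci--Fr\"oberg description (every gap $x$ has $F-x\in S$ or $x\in PF(S)$), which is indeed the standard definition used in the literature the paper draws on, and you translate it into the statement $D=\{f_1,\dots,f_{t-1}\}$ for the set $D$ of ``self-reflecting'' gaps. The forward direction then follows immediately from the fact that the order-reversing involution $\sigma(x)=F-x$ on a finite chain must send $f_i$ to $f_{t-i}$. The converse is where the content lies, and your maximality argument handles it cleanly: given a hypothetical $x\in D\setminus PF(S)$, you push $x$ up to some $f_j\in PF(S)$ via $\preceq$, then use the hypothesis $f_{t-j}=F-f_j$ to exhibit the gap $F-x$ as lying strictly $\preceq$-above the maximal gap $f_{t-j}$, a contradiction. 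The strictness is guaranteed because $f_j-x\in S\setminus\{0\}$ forces $F-x\ne f_{t-j}$. This is essentially the argument in Nari's original paper, so you have reconstructed the intended proof.
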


{\rm Now we need some results about Frobenius number and \gbb that  follow from \cite{M-D}. Let $R=K[x_{0},\ldots,x_{n}]$ be the polynomial ring graded by the weights  $\deg x_0=a_0,\ldots ,\deg x_n=a_n$,
 $J\subset R$ a graded ideal and $B=R/J$.
Set $R'=K[x_1,\ldots,x_n]$ and  denote by $[[R']]$ the set of all monomials of $R'.$ Let $\varphi\colon  [[R']]\rightarrow \bn$ be the map defined by  $\varphi(M)=k_1 a_1+\ldots + k_n a_n$, for every monomial $M=x_1^{k_1}\ldots x_n^{k_n} \in [[R']]$.
We consider   the weighted degree reverse lexicographical order $\prec _{w}$ with  $x_0\prec _{w}\cdots \prec _{w}x_n$ and  $\deg x_i=a_i$ for all $0\leqslant i\leqslant n$. Let ${\rm in}(I)$ be the initial ideal  of $I$ for the order
$\prec _{w}$, that is the ideal generated by the leading terms of all the polynomials  in $I$.   Now we consider  two sets  
 $$\widetilde{\Ap(S,a_0)}\defeq \{ M \in [[R']] \mid  M \notin {{\rm in}(I)}\}$$
and
$$\widetilde{PF(S)}\defeq \{ M  \in \widetilde{\Ap(S,a_0)} \mid \forall i\ne 0,  \exists N_i\in [[R']],  \alpha_i>0 \text{\ such that\ }
 Mx_i-x_0^{\alpha_i}N_i \in I\}.$$

\begin{corollary}\label{c10} Assume that $\gcd(a_0,\ldots ,a_n)=1$. We have  
 
(i)  The restriction of $\varphi$ to $ \widetilde{\Ap(S,a_0)}$  is bijective and $\varphi (\widetilde{\Ap(S,a_0)})=\Ap(S,a_0)$.
In particular
$\card(\widetilde{(\Ap(S,a_0))})=a_0$ and $F(S)=\max\{ { \varphi(M) \mid   M\notin \In(I)}\}-{a_0}$.
 
(ii) The restriction of $\varphi$ to $\widetilde{PF(S)}$   is bijective and $\varphi (\widetilde{PF(S)})=PF(S)+a_0$, i.e. each  
element   $\omega \in PF(S) $ corresponds to exactly one monomial  $M_\omega \in \widetilde{PF(H)}$
such that $\varphi (M_\omega)-a_0=\omega.$

(iii) Maximality of   $ \widetilde{PF(S)}$: let $M\in [[R']]$,   if $x_iM\in \widetilde{\Ap(S,a)}$ for some   $i=1,\dots ,k+1$ then certainly $ M\not\in \widetilde{PF(S)}$.

(iv) Let $s\in {\rm Ap}(S,a_0)$, $M\in \widetilde{\Ap(S,a_0)}$ and $N\in [[R']]$ such that $s=\varphi(M)=\varphi(N)$. Then  $M\prec _{w} N$.
\end{corollary}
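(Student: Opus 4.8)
The plan is to derive all four assertions from two standing observations about the normal-form combinatorics of $I$. First, for any $M\in[[R']]$ one has $\Psi(M)=t^{\varphi(M)}$, so $\varphi(M)\in S$; and if $M,N$ are monomials with $\Psi(M)=\Psi(N)$, then $M-N\in I$, so whenever $M\neq N$ and $M\notin\In(I)$ the leading term of the binomial $M-N$ is forced to be $N$, whence $M\prec_w N$. This already yields (iv): for $s\in\Ap(S,a_0)$, $M\in\widetilde{\Ap(S,a_0)}$ and $N\in[[R']]$ with $\varphi(M)=\varphi(N)=s$, either $N=M$ or $M\prec_w N$. The second observation is that $\prec_w$ is reverse-lexicographic with $x_0$ as its smallest variable, so among monomials of a fixed weighted degree the one carrying the smaller exponent of $x_0$ is the $\prec_w$-larger one; I expect this to be the technical crux, as it is precisely what couples the ring side to the Ap\'ery structure.

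For (i) I would establish three facts. Injectivity of $\varphi$ on $\widetilde{\Ap(S,a_0)}$ is immediate from the normal-form principle. For the inclusion $\varphi(\widetilde{\Ap(S,a_0)})\subseteq\Ap(S,a_0)$: if $M\in[[R']]\setminus\In(I)$ had $s:=\varphi(M)$ with $s-a_0\in S$, choose a monomial $M'$ with $\Psi(M')=t^{s-a_0}$; then $M$ and $x_0M'$ both represent $t^{s}$ and have the same weighted degree, so $M-x_0M'\in I$, and since $M$ carries the strictly smaller power of $x_0$ it is the $\prec_w$-larger term, forcing $M\in\In(I)$, a contradiction. For surjectivity: given $s\in\Ap(S,a_0)$, take the unique standard monomial $M_s$ of weighted degree $s$ (the standard monomials form a $K$-basis of $R/I\cong K[S]$) and write $M_s=x_0^{k_0}M$ with $M\in[[R']]$; then $M\notin\In(I)$ since $\In(I)$ is a monomial ideal, and $k_0=0$ because otherwise $x_0^{k_0-1}M$ would represent $t^{s-a_0}$, contradicting $s\in\Ap(S,a_0)$, so $\varphi(M)=s$. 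The counting statement $\card\widetilde{\Ap(S,a_0)}=a_0$ and the formula for $F(S)$ then follow from the classical identities $\card\Ap(S,a_0)=a_0$ and $F(S)=\max\Ap(S,a_0)-a_0$ via this bijection.

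For (ii) the decisive step is to translate the condition defining $\widetilde{PF(S)}$. Applying $\Psi$ to a relation $Mx_i-x_0^{\alpha_i}N_i\in I$ with $\alpha_i>0$ yields $\varphi(M)+a_i=\alpha_i a_0+\varphi(N_i)$, hence $\omega+a_i\in S$ where $\omega:=\varphi(M)-a_0$; conversely, if $\omega+a_i\in S$ then $t^{\varphi(M)+a_i}=t^{a_0}\,t^{\omega+a_i}$ exhibits such a relation, with $\alpha_i\geq 1$ after collecting the powers of $x_0$. Thus, for $M\in\widetilde{\Ap(S,a_0)}$, membership $M\in\widetilde{PF(S)}$ is equivalent to $\omega+a_i\in S$ for all $i\geq 1$. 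By (i) we have $s:=\varphi(M)\in\Ap(S,a_0)$, so $\omega=s-a_0\notin S$ while $\omega+a_0=s\in S$ holds automatically; and since $S$ is generated by $a_0,\ldots,a_n$, the conditions $\{\omega+a_i\in S\}_{i=0}^{n}$ are together equivalent to $\omega+s'\in S$ for every $0\neq s'\in S$ (expand $s'$ in the generators and peel off one). Hence $M\in\widetilde{PF(S)}\Longleftrightarrow\omega\in PF(S)$, which gives the bijection and $\varphi(\widetilde{PF(S)})=PF(S)+a_0$; for surjectivity one realizes, for each $\omega\in PF(S)$, the element $s=\omega+a_0\in\Ap(S,a_0)$ by its standard monomial from (i).

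Finally, (iii) follows directly: if $M\in\widetilde{PF(S)}$ then $\omega=\varphi(M)-a_0\in PF(S)$ by (ii), so $\omega+a_i\in S$ for every $i\geq 1$; were $x_iM\in\widetilde{\Ap(S,a_0)}$, then (i) would give $\varphi(x_iM)-a_0=\omega+a_i\notin S$, a contradiction, so $M\notin\widetilde{PF(S)}$. The one genuinely delicate point in the whole argument is the reverse-lexicographic comparison with $x_0$ minimal invoked twice in (i); the remainder is bookkeeping with $\Psi$, $\varphi$, and the semigroup generators.
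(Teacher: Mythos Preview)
Your argument is correct. The paper does not actually prove this corollary: it is recalled from \cite{M-D} (the preceding paper by the same authors) as background, with the line ``Now we need some results about Frobenius number and \gbb that follow from \cite{M-D}'' and no proof given. So there is no paper-internal proof to compare against.

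Your self-contained proof is sound. The two structural ingredients you isolate are exactly the right ones: (a) for monomials of the same weighted degree, a difference lies in $I$, and the one outside $\In(I)$ must be the $\prec_w$-smaller; (b) under weighted degree reverse lex with $x_0$ minimal, among monomials of equal weighted degree the one with the smallest $x_0$-exponent is $\prec_w$-largest. These combine to show that the standard monomial of any Ap\'ery degree lives in $[[R']]$, giving the bijection in~(i), and that any $M\in[[R']]\setminus\In(I)$ with $\varphi(M)-a_0\in S$ would be forced into $\In(I)$. The translation in~(ii) between the monomial condition $Mx_i-x_0^{\alpha_i}N_i\in I$ and the semigroup condition $\omega+a_i\in S$ is clean, and the reduction from ``$\omega+s\in S$ for all nonzero $s\in S$'' to the generator conditions is the standard one-step peeling. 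Part~(iii) is then a one-line contrapositive, and~(iv) is the normal-form principle itself. One cosmetic point: in~(iv) the conclusion should be read as $M\preceq_w N$ (equality when $N=M$), which is how your proof treats it.
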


We denote by $\widetilde{\rm Frob(S) }$ the unique monomial in $\widetilde{ PF(S)}$ such that $\varphi (\widetilde{\rm Frob(S) })=F(S)+a_0$. The  following lemma    characterizes almost symmetric numerical semigroups in terms of monomials.
 
 \begin{lemma}\label{Nari-M} Suppose $t(S)>1$. Let $\widetilde{PF(S)}=\{M_1,\dots, M_{t(S)-1}, \widetilde{\rm Frob(S) }\}$ with $\varphi (M_1)<\dots<\varphi (M_{t(S)-1})$. Then $S$ is almost symmetric if and only if
 $M_i M_{t(S)-i}- x_0 \widetilde{\rm Frob(S)}\in I,$ for $i=1,\dots,t(S)-1$.
\end{lemma}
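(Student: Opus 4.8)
The plan is to reduce the monomial statement to the numerical criterion of Lemma~\ref{Nari} by transporting everything through the bijection of Corollary~\ref{c10}(ii) and the binomial membership test for the toric ideal $I$. The lemma is essentially a change of language, so the work lies in setting up the dictionary carefully.

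First I would unwind the correspondence between $\widetilde{PF(S)}$ and $PF(S)$. Write $PF(S)=\{f_1,\dots,f_{t(S)-1},F(S)\}$ with $f_1<\dots<f_{t(S)-1}<F(S)$. By Corollary~\ref{c10}(ii) the restriction of $\varphi$ to $\widetilde{PF(S)}$ is a bijection onto $PF(S)+a_0$, so each $f_i$ corresponds to a unique monomial of weighted degree $f_i+a_0$; labelling these monomials by increasing $\varphi$ gives exactly the $M_i$ of the statement, so that $\varphi(M_i)=f_i+a_0$ and the hypothesis $\varphi(M_1)<\dots<\varphi(M_{t(S)-1})$ matches $f_1<\dots<f_{t(S)-1}$. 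Moreover $\varphi(\widetilde{\rm Frob(S)})=F(S)+a_0$ by the definition of $\widetilde{\rm Frob(S)}$.

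The key observation is the membership criterion for binomials in $I$. Since $I=\ker\Psi$ and $\Psi$ sends a monomial $M\in[[R]]$ to $t^{\varphi(M)}$, a binomial $M-N$ with $M,N\in[[R]]$ lies in $I$ if and only if $\Psi(M)=\Psi(N)$, that is, if and only if $\varphi(M)=\varphi(N)$; here one uses that $I$ is a prime ideal generated by binomials and containing no monomials, so this criterion applies verbatim. Applying it to $M=M_iM_{t(S)-i}$ and $N=x_0\widetilde{\rm Frob(S)}$, and using $\varphi(x_0\widetilde{\rm Frob(S)})=a_0+\varphi(\widetilde{\rm Frob(S)})$, I obtain that $M_iM_{t(S)-i}-x_0\widetilde{\rm Frob(S)}\in I$ holds if and only if $\varphi(M_i)+\varphi(M_{t(S)-i})=a_0+\varphi(\widetilde{\rm Frob(S)})$, i.e.
\[(f_i+a_0)+(f_{t(S)-i}+a_0)=a_0+(F(S)+a_0),\]
which simplifies precisely to $f_i+f_{t(S)-i}=F(S)$.

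Finally I would invoke Lemma~\ref{Nari}: by the computation above, the system $M_iM_{t(S)-i}-x_0\widetilde{\rm Frob(S)}\in I$ for $i=1,\dots,t(S)-1$ is equivalent to $f_i+f_{t(S)-i}=F(S)$ for all $i$, which is exactly the condition for $S$ to be almost symmetric. There is no genuine obstacle here; the only points needing care are that the $\varphi$-ordering of the monomials $M_i$ faithfully transports the ordering of the $f_i$, and that the binomial membership test is legitimate, and both follow directly from Corollary~\ref{c10} and the toric structure of $I$ recalled above.
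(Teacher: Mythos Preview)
Your proof is correct and follows essentially the same approach as the paper's proof: both compute $\varphi(M_iM_{t(S)-i})=2a_0+f_i+f_{t(S)-i}$ and $\varphi(x_0\widetilde{\rm Frob(S)})=2a_0+F(S)$, then reduce the binomial condition to Nari's criterion via Lemma~\ref{Nari}. The paper's version is terser, leaving implicit both the correspondence from Corollary~\ref{c10}(ii) and the fact that a binomial lies in $I$ exactly when its two terms have equal weighted degree, whereas you spell these out explicitly.
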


\begin{proof} Let $PF(S)=\{f_1,\dots, f_{t(S)-1}, F(S) \}$. Since   $\varphi (M_i )=a_0+f_i$, we have  $\varphi (M_i M_{t(S)-i})=2a_0+f_i+f_{t(S)-i}$ and
$\varphi (\widetilde{\rm Frob(S)})=a_0+F(S)$. So our claim follows by Lemma \ref{Nari}.
\end{proof}

\section{Numerical generalized almost arithmetic semigroups (AAG-semigroups)} 
 
Let  $a, k,h, c$ be positive integers and $d\in \fz^*$ such that $ha+kd>0$. Recall from \cite{M-D} that a {\it numerical  generalized almost  arithmetic  semigroup } (AAG-semigroup for short) is a semigroup minimally generated by relatively prime positive integers $a, ha+d, ha+2d, \ldots, ha+kd, c, $ that is its embedding dimension is $k+2$.  
 An interesting particular case   when $h=1, d>0$ called a {\it numerical almost   arithmetic  semigroup}. They were considered by D. P. Patil  \cite{P} from the algebraic point of view and by  J. L. Ram\'irez Alfons\'in  and  O. J. R\o dseth in \cite{RR}, \cite {ROD2} from combinatorial point of view.
  
 Let $R= K[x_{0},\ldots,x_{k}, x_{k+1}]$ be the polynomial ring  in $k+2$ variables over $K$  graded by the weights  $\deg x_i=ha+id$ for $i=0,\ldots,k,$  $\deg x_{k+1}=c$ and  $I$  the kernel of the homomorphism $\Phi\colon  R  \rightarrow K[S]$ of $K$-algebras defined by $\Phi(x_0)=t^{a}$, $\Phi(x_i)=t^{ha+id} $ for all  $i=1,\ldots, k$ and $\Phi(x_{k+1})=t^c.$ Let  $R'= K[x_1,\ldots,x_{k+1}]$.
 
\subsection{\gbb of AAG-semigroups}

\begin{remark}\label{rneg} {\rm If $d<0$ then we can assume $h\geq 2$. Namely, suppose  $d<0$ and $h=1$,   $S$ is generated by $a, a+d, a+2d, \ldots, a+kd, c $. If we set $a'= a+kd, d'=-d>0$ then $S$ is generated by $a', a'+d', a'+2d', \ldots, a'+kd', c$. }
 \end{remark}
 
 We recall the following results from \cite[Section 5]{M-D}. The following result extends \cite[Lemma 1.6.1]{P}. 
 
 \begin{lemma}\label{determinantal} For $1\leq i,j<k$, set 
 	$$\mathcal A\defeq \{x_ix_j- x_{0}^h x_{i+j}\mid  {\rm if\ }    i+j\leq k,\} \cup \{x_ix_j- x_{i+j-k}x_{k}\mid {\rm if\ }    i+j> k\}.$$
 	Then every binomial of $\mathcal A$ belongs to $I$. Note that  $\card(\mathcal A)=\frac{(k-1)k}{2}$. 
 \end{lemma}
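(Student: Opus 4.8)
The plan is to establish membership by a one-line computation with the defining homomorphism $\Phi$ for each of the two families, and then to count $\mathcal{A}$ by a small combinatorial bijection. Since $I=\ker\Phi$ and $\Phi$ carries every monomial of $R$ to a single power of $t$, a binomial $M-N$ lies in $I$ exactly when $\Phi(M)=\Phi(N)$, i.e. when the two monomials have the same image under $\Phi$. Thus the entire membership claim reduces to checking that the exponents of $t$ on the two terms of each binomial coincide, using $\Phi(x_0)=t^{a}$ and $\Phi(x_i)=t^{ha+id}$ for $1\le i\le k$.

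For the first family, where $i+j\le k$, I would compute $\Phi(x_ix_j)=t^{(ha+id)+(ha+jd)}=t^{2ha+(i+j)d}$ and $\Phi(x_0^hx_{i+j})=(t^a)^h\,t^{ha+(i+j)d}=t^{ha}\,t^{ha+(i+j)d}=t^{2ha+(i+j)d}$; the two exponents agree, so $x_ix_j-x_0^hx_{i+j}\in I$. For the second family, where $i+j>k$, the same computation gives $\Phi(x_{i+j-k}x_k)=t^{ha+(i+j-k)d}\,t^{ha+kd}=t^{2ha+(i+j)d}=\Phi(x_ix_j)$, so $x_ix_j-x_{i+j-k}x_k\in I$. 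Here I would also verify that every variable occurring is a genuine generator: from $1\le i,j\le k-1$ we get $2\le i+j\le 2k-2$, so in the first case $2\le i+j\le k$ and in the second case $1\le i+j-k\le k-2$, whence the targets $x_{i+j}$ and $x_{i+j-k}$ both lie among $x_1,\dots,x_k$.

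For the cardinality, I would observe that each binomial of $\mathcal{A}$ is determined by its leading monomial $x_ix_j$, hence by the unordered pair $\{i,j\}$ (repetitions allowed) taken from $\{1,\dots,k-1\}$, and that for every such pair exactly one of the mutually exclusive conditions $i+j\le k$, $i+j>k$ holds, producing exactly one binomial. Distinct pairs give distinct leading monomials, so the assignment is a bijection and $\card(\mathcal{A})$ equals the number of such pairs, namely $\binom{k-1}{2}+(k-1)=\binom{k}{2}=\tfrac{(k-1)k}{2}$.

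There is no real obstacle: the statement is a pure verification and is the expected generalization to arbitrary $h$ of \cite[Lemma 1.6.1]{P}, which treats the arithmetic case $h=1$. The only places that ask for a little care are the two pieces of bookkeeping above, namely noticing that it is precisely the convention $\Phi(x_0)=t^{a}$ that makes the common exponent $2ha+(i+j)d$ appear on both sides of each binomial, and confirming the index ranges so that no out-of-range variable is introduced.
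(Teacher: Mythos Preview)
Your proof is correct. The paper does not actually give its own proof of this lemma: it is recalled from \cite[Section 5]{M-D} as an extension of \cite[Lemma 1.6.1]{P}, so there is nothing to compare against beyond the fact that your direct verification via $\Phi$ and the multiset count of unordered pairs from $\{1,\dots,k-1\}$ is exactly the standard argument one would expect.
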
 
 
 \begin{corollary}\label{orderGB} Let consider the weighted monomial order $\prec _{w}$ such that $x_0\prec _{w}x_1\prec _{w}\ldots \prec _{w}x_k\prec _{w}x_{k+1}$.  The leading terms of $\mathcal A$ are part of the generators of $\In(I)$. Moreover     $\In(I)\cap [[R']]$ and $\widetilde{\Ap(S,a_0)}$ can be represented in the plane. Indeed, to any point $(y,z)\in \fn^2$ we associate the monomial 
 	$M(y,z)  \defeq L_ix_k^{\alpha}x_{k+1}^z,$ where $\alpha={\big\lfloor \frac{y}{k} \big\rfloor}$, $ i= y-k\alpha$ and  $L_i=1$   if $ i=0$  and $L_i=x_i $   if  $0<i<k.$  Conversely, any monomial $L_ix_k^{\alpha}x_{k+1}^z \in  [[R']]$  can be represented by the point $(y,z)\in \fn^2$, where $y=\alpha k+i.$
 \end{corollary}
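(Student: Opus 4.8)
The plan is to pin down the leading term of each binomial in $\mathcal A$ with respect to $\prec_w$, to deduce that the resulting quadratic monomials sit among the generators of $\In(I)$, and then to describe the complementary monomials of $[[R']]$ by Euclidean division, which produces the planar picture.

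First I would use that $I$ is homogeneous for the weighted grading, so every binomial of $\mathcal A$ is weighted-homogeneous; indeed $\deg(x_ix_j)=2ha+(i+j)d$ agrees with both $\deg(x_0^hx_{i+j})$ and $\deg(x_{i+j-k}x_k)$. Since the two terms of each binomial share the weighted degree, $\prec_w$ decides between them purely by the reverse-lexicographic tiebreak, which prefers the monomial carrying the smaller exponent in the least variable where the two exponent vectors differ. For $x_ix_j-x_0^hx_{i+j}$ the second monomial involves the least variable $x_0$ while the first does not, so $x_ix_j$ leads. For $x_ix_j-x_{i+j-k}x_k$ with $i+j>k$, setting $p=i+j-k$ one checks $1\le p<\min(i,j)$; below $x_p$ both monomials are trivial, and at $x_p$ the exponent is $0$ in $x_ix_j$ but $1$ in $x_{i+j-k}x_k$, so again $x_ix_j$ is the leading term. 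In both families the leading term is therefore $x_ix_j$.

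Next I would record that consequently $\{x_ix_j\mid 1\le i,j\le k-1\}\subseteq\In(I)$. These degree-two monomials admit no divisibilities among themselves except equalities, and since $S$ is minimally generated $\In(I)$ contains no variable (a degree-one leading term would give a binomial $x_r-N\in I$ expressing a minimal generator through the others), hence has no degree-one generator; so the $x_ix_j$ belong to a minimal generating set of $\In(I)$, which is the first assertion. A monomial $x_1^{b_1}\cdots x_{k+1}^{b_{k+1}}$ of $R'$ avoids all of these generators exactly when $b_1+\cdots+b_{k-1}\le 1$, i.e.\ when it has the shape $L_ix_k^{\alpha}x_{k+1}^z$ with $L_0=1$ and $L_i=x_i$ for $0<i<k$; every remaining monomial of $R'$ already lies in $\In(I)$. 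By Euclidean division each $y\in\fn$ is uniquely $y=\alpha k+i$ with $0\le i<k$, so $(y,z)\mapsto M(y,z)$ is a bijection from $\fn^2$ onto $\{L_ix_k^{\alpha}x_{k+1}^z\}$ with inverse $L_ix_k^{\alpha}x_{k+1}^z\mapsto(\alpha k+i,z)$. Since $\widetilde{\Ap(S,a_0)}=\{M\in[[R']]\mid M\notin\In(I)\}$ is contained in this set, both $\widetilde{\Ap(S,a_0)}$ and the complementary part $\In(I)\cap[[R']]$ are visible inside this plane, as claimed.

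The hard part will be the reverse-lexicographic comparison for the second family, where both candidate leading monomials avoid $x_0$ and the tie must be broken one variable deeper; identifying $x_p$ with $p=i+j-k$ as the decisive least variable is exactly what forces $x_ix_j$ to lead. Once the leading terms are fixed, the planar description reduces to the Euclidean-division bookkeeping above.
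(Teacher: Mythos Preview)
Your argument is correct and complete. The paper itself offers no proof of this corollary: it is stated as an immediate consequence of Lemma~\ref{determinantal} (and implicitly of the results in \cite{M-D}), so there is no alternative approach to compare against; your detailed verification of the reverse-lexicographic tiebreak, the minimality of the quadratic leading terms, and the Euclidean-division bijection is exactly the intended filling-in of the details.

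One small remark on the final sentence: the set $\In(I)\cap[[R']]$ literally contains monomials such as $x_1^2$ that are not of the form $M(y,z)$, so it is not itself a subset of the plane. What the paper means (and what is used later, e.g.\ in Corollary~\ref{c-AAG-1}) is that once the leading terms of $\mathcal A$ are accounted for, the \emph{remaining} monomials of $[[R']]$---namely those of the shape $L_ix_k^{\alpha}x_{k+1}^z$---biject with $\fn^2$, and both $\widetilde{\Ap(S,a_0)}$ and its complement inside this family are then planar regions. Your argument establishes precisely this, so the content is right; you may simply want to phrase the conclusion to match.
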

 
 \begin{lemma}\label{equations-s-p} 
 	Let $s  \in \fn,p,r \in \fz $ such that 
 	$$ ra=sd-pc.$$
 	Let $s=\sigma k+l\rho,$ where $0\leq \rho <k$  and   $l=0$ if $\rho =0$,  $l=1$ if $\rho >0$. 
 	\item (1) We have 
 	$$ (r+h(\sigma +l))a  = \sigma   (ha+kd) +l(ha+\rho    d)-p c  .\ \ (*)$$
 	\item (2) Define   
 	$r'\defeq r+h(\sigma +l)$, we have \begin{itemize}\item  If $p,r'\geq 0$ then $L_lx_k^\sigma  -x_0 ^{r'}x_{k+1}^p\in I $.
 		\item  If $p<0,r'> 0$ then $L_lx_k^\sigma x_{k+1}^{-p} -x_0 ^{r'} \in I $. 
 		\item  If $p\geq 0,r'< 0$ then  $x_{k+1}^{p} -x_0 ^{-r'} L_l x_k^\sigma \in I$. 
 	\end{itemize}
 \end{lemma}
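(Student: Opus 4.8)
The plan is to prove Lemma~\ref{equations-s-p} by a direct substitution argument, splitting into the stated sign cases. Everything hinges on the decomposition $s=\sigma k+l\rho$ and on the relations $\deg x_k = ha+kd$ and $\deg x_i = ha+id$ coming from the grading.

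\emph{Step 1 (the identity $(*)$).} First I would verify part (1) purely by arithmetic. Starting from the hypothesis $ra=sd-pc$ and the decomposition $s=\sigma k + l\rho$, I substitute to obtain $ra = (\sigma k + l\rho)d - pc = \sigma(kd) + l(\rho d) - pc$. The goal is to rewrite this so that the right-hand side becomes $\sigma(ha+kd)+l(ha+\rho d)-pc$, which exceeds $\sigma(kd)+l(\rho d)-pc$ by exactly $\sigma(ha)+l(ha)=h(\sigma+l)a$. Moving this correction term to the left-hand side gives
$$ (r+h(\sigma+l))a = \sigma(ha+kd)+l(ha+\rho d)-pc,$$
which is precisely $(*)$ with $r'=r+h(\sigma+l)$. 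This step is routine bookkeeping; the only subtlety is checking that when $\rho=0$ we have $l=0$ so the middle term vanishes and the formula still reads correctly as $r'a=\sigma(ha+kd)-pc$.

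\emph{Step 2 (translating $(*)$ into binomial membership).} For part (2), the key observation is that applying the algebra homomorphism $\Phi$ to the relevant monomials converts $(*)$ into an equality of powers of $t$. By the grading, $\Phi(L_l x_k^\sigma) = t^{l\deg x_l + \sigma \deg x_k} = t^{l(ha+\rho d)+\sigma(ha+kd)}$, where $L_l=1$ if $l=0$ and $L_l=x_\rho$ if $l=1$ (matching the convention $L_i=x_i$ for $0<i<k$ in Corollary~\ref{orderGB}); similarly $\Phi(x_0)=t^a$ and $\Phi(x_{k+1})=t^c$. The identity $(*)$ says exactly that the $t$-exponents on the two sides of each proposed binomial agree, depending on which of $p$ and $r'$ are nonnegative. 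In the case $p,r'\geq 0$, I compute $\Phi(L_l x_k^\sigma) = t^{\sigma(ha+kd)+l(ha+\rho d)}$ and $\Phi(x_0^{r'}x_{k+1}^p)=t^{r'a+pc}$, and $(*)$ gives $\sigma(ha+kd)+l(ha+\rho d)=r'a+pc$, so the two monomials have the same image and their difference lies in $I=\ker\Phi$. The remaining two cases, $p<0,r'>0$ and $p\geq 0,r'<0$, are handled identically: one simply moves the factor with the negative exponent to the other side of the binomial so that all exponents that actually appear are nonnegative (a genuine monomial), then checks equality of $\Phi$-images via $(*)$.

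\emph{Expected obstacle.} The mathematics is essentially a verification, so there is no deep difficulty; the real care is bookkeeping and conventions. The main thing to watch is the notation $L_l$: the index on $L$ should track the residue $\rho$, not $l$ itself, since $l$ is only a $0/1$ flag. I would make sure that when $l=1$ the monomial is $x_\rho$ (with $0<\rho<k$, so $L_\rho = x_\rho$ is well defined and is a legitimate generator of $R'$), and when $l=0$ it is the empty product $1$. A second point of attention is that each asserted binomial must be an honest element of the polynomial ring, i.e. all exponents appearing must be nonnegative integers; this is exactly why the lemma separates the three sign cases, and verifying that the signs of $p$ and $r'$ line up with which variable gets moved across is the only place an error could creep in. Once the sign bookkeeping is fixed, membership in $I$ follows immediately because $I$ is the kernel of $\Phi$ and $(*)$ forces equality of images.
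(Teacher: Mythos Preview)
Your proof is correct and is the natural direct verification: identity $(*)$ follows by adding $h(\sigma+l)a$ to both sides of $ra=sd-pc$ after expanding $s=\sigma k+l\rho$, and the binomial membership in part (2) follows immediately since $I=\ker\Phi$ and $(*)$ forces both monomials to have the same image under $\Phi$. The paper itself does not supply a proof of this lemma here (it is recalled from the authors' earlier work \cite{M-D}), so there is no alternative approach to compare against; your remark that the symbol $L_l$ should really be read as $L_\rho$ (equivalently $x_\rho^{\,l}$) is a correct reading of the intended convention, since $l=0\iff\rho=0$.
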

 
 Now we  define  numbers $s_i, p_i, r_i, q_i$ for $i\geq 2 $ such that $(s_i,p_i,r_i)$ is solution of the equation $sd-pc=ra$.
 We apply our algorithm for the case $n=3$ in \cite{Mo1} with numbers $a, d,c$.
 
 Let $s_0$ be the smallest natural number such that 
 $(s_0,0,r_0)$ is solution of the equation $sd-pc=ra$. Set $p_0=0$ and let $p_1$ be the smallest natural number such that
 $(s_1,p_1,r_1)$ is solution of the equation $sd-pc=ra$, where  $0\leq s_1<s_0$. 
 In our situation by using \cite[Proposition 1.3]{Mo1} we can assume $\gcd(a,d)=1$, so $s_0=a, p_1=1$.  Let consider the Euclid's algorithm with negative rest:  
 $$ \left \{
 \begin{array}{r c l}
 	s_{0} & = & q_2s_1-s_2 \\
 	s_1 & = & q_3s_2-s_3 \\
 	\ldots  & = & \ldots \\
 	s_{m-1} & = & q_{m+1}s_{m} \\
 	s_{m+1} & = & 0 
 \end{array}
 \right.       $$
 where  $ q_i\ge 2,\ s_i\ge 0$ for all  $i=2,\ldots, m+1.$ For $i=1, \ldots, m$, let define  $ p_{i+1}, r_{i+1}$   by
 $$  p_{i+1}=p_iq_{i+1}-p_{i-1}~,  r_{i+1}=r_iq_{i+1}-r_{i-1}.$$
 It is proved in \cite{Mo1},  \cite{Mo2} that for $i=0,\ldots, m,$  a triple number $(s_i,p_i,r_i)$ is solution of the equation $sd-pc=ra$,
 $$s_i p_{i+1}- s_{i+1}p_i=a, s_{i+1}r_i-s_i r_{i+1}=c, p_{i+1}r_i-p_i r_{i+1}=d,$$
 and the sequence $s_i$ is decreasing, while the sequence $p_i$ is increasing. Moreover if $d>0$ then the sequence $ r_i$ is decreasing.
 
 Let $s_i= \sigma _i k+l_i \rho _i,$ where $0\leq \rho _i<k$ and  $l_i=0 $ if   $\rho _i=0$, $l_i=1 $ if   $\rho _i>0 $. We also set     
 $ r'_i= r_i+h(\sigma _i+l_i)$. If $s_{ \mu+1}\not=0$ let $ s_\mu-s_{ \mu+1}= \widetilde \sigma k+ \widetilde l \widetilde \rho$, with $0\leq \widetilde \rho<k,$  and $\widetilde l=0$ if $\widetilde \rho=0$, $\widetilde l=1$ if $\widetilde \rho>0$. We define $\widetilde  r\defeq   r_\mu-r_{ \mu+1} +h(\widetilde \sigma +\widetilde l)$.  
 
 \begin{lemma}\label{widetildehorizontal0} (\cite[Lemma 5.4]{M-D}) For $i=0,\ldots, m+1$ we have 
 	$$L_{\widetilde \rho_{i}} x_k^{\widetilde \sigma _{i}}x_{k+1}^{p_{i+1}-p_i}-x_0^{\widetilde r _{i}}\in I,$$
 	where  $\widetilde r _{i}>h$ if    ${\widetilde \rho_{i}}>0$. Moreover for $i=0,\ldots, m,$ we have  $r'_i> r' _{i+1}.$  
 \end{lemma}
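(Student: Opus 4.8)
The plan is to derive everything from Lemma \ref{equations-s-p} applied not to a single triple but to the difference of two consecutive ones. Since both $(s_i,p_i,r_i)$ and $(s_{i+1},p_{i+1},r_{i+1})$ solve $sd-pc=ra$, so does the triple $(s,p,r)=(s_i-s_{i+1},\,-(p_{i+1}-p_i),\,r_i-r_{i+1})$ obtained by subtraction. Writing $s_i-s_{i+1}=\widetilde\sigma_i k+\widetilde l_i\widetilde\rho_i$ as in the statement identifies $\sigma=\widetilde\sigma_i$, $\rho=\widetilde\rho_i$, $l=\widetilde l_i$, and makes $r'=r+h(\sigma+l)$ equal to $\widetilde r_i=(r_i-r_{i+1})+h(\widetilde\sigma_i+\widetilde l_i)$. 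Because $p_0=0$, $p_1=1$ and $p_{i+1}=q_{i+1}p_i-p_{i-1}$ with $q_{i+1}\ge 2$, the sequence $p_i$ is strictly increasing, so $p=-(p_{i+1}-p_i)<0$ and we are in the regime $p<0$ of Lemma \ref{equations-s-p}.

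First I would record positivity of $\widetilde r_i$. Equation $(*)$ of Lemma \ref{equations-s-p} for this triple reads
\[
\widetilde r_i\,a=\widetilde\sigma_i(ha+kd)+\widetilde l_i(ha+\widetilde\rho_i d)+(p_{i+1}-p_i)c,
\]
and the right-hand side is exactly the weighted degree $\deg\big(L_{\widetilde\rho_i}x_k^{\widetilde\sigma_i}x_{k+1}^{p_{i+1}-p_i}\big)$. Since every generator of $S$ is a positive integer, all the weights $\deg x_j$ are positive, and since $p_{i+1}-p_i\ge 1$ the monomial is nonconstant; hence its degree is $>0$ and $\widetilde r_i>0$. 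With $p<0$ and $r'=\widetilde r_i>0$, the case $p<0,\,r'>0$ of Lemma \ref{equations-s-p}(2) gives precisely
\[
L_{\widetilde\rho_i}x_k^{\widetilde\sigma_i}x_{k+1}^{p_{i+1}-p_i}-x_0^{\widetilde r_i}\in I,
\]
which is the displayed membership (the boundary indices $i=m,m+1$ are handled by the same computation under the convention $s_{m+1}=0$).

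Next I would upgrade this to $\widetilde r_i>h$ whenever $\widetilde\rho_i>0$. In that case $\widetilde l_i=1$, so the identity above becomes $\widetilde r_i a=(ha+\widetilde\rho_i d)+\widetilde\sigma_i(ha+kd)+(p_{i+1}-p_i)c$; subtracting $ha=\deg x_0^h$ leaves $\widetilde\rho_i d+\widetilde\sigma_i(ha+kd)+(p_{i+1}-p_i)c$, which I want to be strictly positive. The factor $x_{k+1}^{p_{i+1}-p_i}$ already contributes $(p_{i+1}-p_i)c>0$; for $d>0$ the term $\widetilde\rho_i d$ is itself positive and the conclusion is immediate. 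Hence $\widetilde r_i a>ha$, i.e. $\widetilde r_i>h$.

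Finally, for the monotonicity $r'_i>r'_{i+1}$ I would compare $\widetilde r_i$ with $r'_i-r'_{i+1}$ through the base-$k$ subtraction of $s_{i+1}=\sigma_{i+1}k+\rho_{i+1}$ from $s_i=\sigma_ik+\rho_i$. A borrow occurs exactly when $\rho_i<\rho_{i+1}$; bookkeeping of $\widetilde\sigma_i,\widetilde l_i$ against $\sigma_i,l_i,\sigma_{i+1},l_{i+1}$ yields $\widetilde r_i-(r'_i-r'_{i+1})=h\epsilon_i$ with $\epsilon_i\in\{0,1\}$, where $\epsilon_i=1$ only in the two cases $\rho_i>\rho_{i+1}>0$ and $\rho_{i+1}>\rho_i=0$, both of which force $\widetilde\rho_i>0$. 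Thus when $\epsilon_i=0$ we get $r'_i-r'_{i+1}=\widetilde r_i>0$, and when $\epsilon_i=1$ we get $r'_i-r'_{i+1}=\widetilde r_i-h>0$ by the previous step; in either case $r'_i>r'_{i+1}$. The main obstacle I anticipate is the bound $\widetilde r_i>h$ for $\widetilde\rho_i>0$ when $d<0$: there $\widetilde\rho_i d<0$ can offset the contribution of $x_0^h$, so one must control the competing terms $\widetilde\sigma_i(ha+kd)$ and $(p_{i+1}-p_i)c$, and it is precisely here that the standing technical assumption on $a,d,k,h,c$ together with the (increasing) behaviour of $r_i$ must be invoked.
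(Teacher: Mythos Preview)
The paper does not prove this lemma; it is quoted from \cite{M-D}, so there is no in-paper proof to compare against. Your strategy---feeding the difference triple $(s_i-s_{i+1},\,-(p_{i+1}-p_i),\,r_i-r_{i+1})$ into Lemma~\ref{equations-s-p}---is the natural one, and your argument for $\widetilde r_i>0$ via the weighted degree of the corresponding monomial is clean. The bookkeeping linking $\widetilde r_i$ to $r'_i-r'_{i+1}$ is also correct: it is exactly the six-case computation of Lemma~\ref{widetildehorizontal} done at a general index, and you have correctly isolated the two ``borrow'' cases ($\rho_i>\rho_{i+1}>0$ and $\rho_i=0<\rho_{i+1}$) as the only ones producing the offset $h$, both of which force $\widetilde\rho_i>0$.

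The gap you yourself flag for $d<0$ is, however, real and not closed by what you wrote. Your reduction gives $(\widetilde r_i-h)a=\widetilde\rho_i d+\widetilde\sigma_i(ha+kd)+(p_{i+1}-p_i)c$, and for $d<0$ nothing you have established prevents $\widetilde\rho_i|d|$ from dominating the positive terms; in particular there is no standing lower bound on $c$ relative to $k|d|$. The phrase ``the standing technical assumption on $a,d,k,h,c$ together with the (increasing) behaviour of $r_i$'' is not an argument: the paper's running hypothesis $r'_\mu\ge h$ or $\rho_\mu=0$ constrains only the distinguished index $\mu$, not a general $i$, and for $d<0$ the identities $p_{i+1}r_i-p_ir_{i+1}=d$ and $s_{i+1}r_i-s_ir_{i+1}=c$ only yield $p_i/p_{i+1}<|r_i|/|r_{i+1}|<s_i/s_{i+1}$, which does not pin down the sign of $r_i-r_{i+1}$. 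Since your monotonicity step $r'_i>r'_{i+1}$ invokes $\widetilde r_i>h$ precisely in the $\epsilon_i=1$ cases, the gap propagates there as well. To finish, you must either import the missing estimate from \cite{M-D} or supply an independent bound (for instance exploiting that $c$ is a minimal generator not in the arithmetic part, together with the $2\times 2$ relations $s_ip_{i+1}-s_{i+1}p_i=a$, $s_{i+1}r_i-s_ir_{i+1}=c$).
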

 
 Note that we always  have $\widetilde r _{i}\geq 2$ since we assume that the embedding dimension of $S$ is $k+2$.
 
 \begin{definition} {\rm Let $ \mu$ be the unique integer such that   $ r'_{\mu }> 0\geq r'_{\mu+1}$. We collect the {\it sequences} in the following table \begin{tabular}{lll|l}
 		s & p & r & r'\\
 		\hline 
 		$\cdots$& $\cdots$& $\cdots$&$\cdots$\\
 		$s_\mu $ & $p_\mu $ & $r_\mu $& $r'_\mu $\\
 		$s_{\mu +1}$ & $p_{\mu +1}$ & $r_{\mu +1}$ & $r'_{\mu +1}$ \\
 		$\cdots$& $\cdots$& $\cdots$&$\cdots$\\
 	\end{tabular}}
 \end{definition}
 
 As in \cite{M-D},  from now on we  suppose that either  $r'_\mu\geq h$ or $ \rho_\mu =0$. Note that by the definition of $\mu $, if $h=1$ then  $r'_\mu\geq h$.
 
 \begin{definition}{\rm  With the above notations and the assumption $r'_\mu\geq h$ or $ \rho_\mu =0$, we define some {\it sets}:  
 		\item(1) If  $ \rho_\mu =0$ then we set  $\displaystyle \mathcal B\defeq \{  x_k^{\sigma_\mu} -x_0^{r'_\mu }x_{k+1}^{p_\mu}\}.$ 
 		If  $ \rho_\mu \not=0$ then we set $$\displaystyle \mathcal B\defeq \{ x_{\rho_\mu } x_k^{ \sigma_\mu } -x_0^{r'_\mu }x_{k+1}^{p_\mu} , x_{\rho_\mu +j} x_k^{\sigma_\mu } -x_0^{r'_\mu-h} x_{j} x_{k+1}^{p_\mu} \mid 1\leq j\leq k-\rho_\mu \}.$$
 		
 		\item(2)   If $\widetilde \rho>0$ then we set 
 		$$\displaystyle \mathcal C\defeq \{ x_{\widetilde \rho}x_k^{\widetilde \sigma} x_{k+1}^{p_{ \mu+1}-p_\mu}-x_0^{\widetilde  r},x_{j+\widetilde \rho} x_k^{\widetilde \sigma} x_{k+1}^{p_{ \mu+1}-p_\mu}-x_0^{\widetilde  r-h}x_j \mid  1\leq j\leq  k-\widetilde \rho \},$$
 		if  $\widetilde \rho=0 $ then we set 
 		$\displaystyle \mathcal C\defeq \{  x_k^{\widetilde \sigma} x_{k+1}^{p_{ \mu+1}-p_\mu}-x_0^{\widetilde  r}\}, $ and if $s_{ \mu+1} =0$ then we set $\mathcal C\defeq \emptyset$.
 		
 		\item(3) $\displaystyle \mathcal D\defeq\{x_{k+1}^{p_{ \mu+1}}-x_0^{-r'_{ \mu+1}}L_{\rho_{ \mu+1}}x_k^{\sigma_{ \mu+1}} \}$. 
 		
 		By our assumptions the embedding dimension of the semigroup $S$ is $k+2,$ so $s_\mu >k$, $p_{ \mu+1}>1$ which implies $\mu >0$, and  if $r'_{ \mu+1}=0$  then $s_{ \mu+1}>k$, or reciprocally if $s_{ \mu+1}\leq k$ then $r'_{ \mu+1}<0$.}
 \end{definition}
 
 \medskip
 The next theorem allows us to compute effectively a \gbb of the ideal semigroup $I$, it precises and   extends the main theorem of \cite {P} where the case $h=1$ is considered.
 
 \begin{theorem} \label{AAG-1} Let $S$ be a AAG-semigroup. Suppose that either  $r'_\mu\geq h$ or $ \rho_\mu =0$. 
 	\item (i) We have   \[\begin{aligned}\widetilde{\Ap(S,a)}&=\left\{L_ix_k^{\alpha}x_{k+1}^z \mid 0\leq \alpha k+i <s_{\mu }-s_{\mu+1}, 0\leq z<p_{\mu+1 }\right\}\cup\\
 		& \left\{L_ix_k^{\alpha}x_{k+1}^z \mid
 		s_{\mu }-s_{\mu+1}\leq \alpha k+i <s_{\mu },0\leq z<p_{\mu+1 }-p_{\mu}\right\}.\end{aligned}\]
 	
 	\item(ii) 
 	$\mathcal G\defeq \mathcal A\cup \mathcal B\cup \mathcal C\cup \mathcal D$ is a \gbb of $I$.
 \end{theorem}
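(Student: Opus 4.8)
The plan is to prove both parts simultaneously by comparing the standard monomials of the candidate set $\mathcal G$ with the monomial Ap\'ery set, deliberately avoiding Buchberger's criterion in favour of a cardinality count.

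First I would record that $\mathcal G\subseteq I$. The binomials of $\mathcal A$ lie in $I$ by Lemma \ref{determinantal}, while those of $\mathcal B$, $\mathcal C$, $\mathcal D$ are exactly the binomials furnished by Lemma \ref{equations-s-p} and Lemma \ref{widetildehorizontal0} applied to the triple $(s_\mu,p_\mu,r_\mu)$, to the difference data defining $\widetilde\sigma,\widetilde\rho,\widetilde r$, and to $(s_{\mu+1},p_{\mu+1},r_{\mu+1})$ respectively; the sign conditions $r'_\mu>0\geq r'_{\mu+1}$ coming from the definition of $\mu$ are precisely what selects which of the three shapes of binomial occurs in each case. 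Next I would compute the leading term of every element of $\mathcal G$ for $\prec_w$. Since $I$ is graded for the weights, each binomial is weight-homogeneous, so the tie is broken by the reverse-lexicographic rule with $x_0$ smallest: the deciding variable is always the smallest one in which the two monomials differ, and it carries a strictly smaller exponent in the monomial that avoids $x_0$. Hence the leading terms are $x_ix_j$ for $\mathcal A$, the monomials $x_{\rho_\mu+j}x_k^{\sigma_\mu}$ (or $x_k^{\sigma_\mu}$) for $\mathcal B$, the monomials $x_{j+\widetilde\rho}x_k^{\widetilde\sigma}x_{k+1}^{p_{\mu+1}-p_\mu}$ (or $x_k^{\widetilde\sigma}x_{k+1}^{p_{\mu+1}-p_\mu}$) for $\mathcal C$, and $x_{k+1}^{p_{\mu+1}}$ for $\mathcal D$. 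In particular none of them involves $x_0$, so $\In(\mathcal G)$ is extended from a monomial ideal of $R'$.

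I would then read off the standard monomials through the planar encoding of Corollary \ref{orderGB}. The leading terms of $\mathcal A$ force every standard monomial into the form $L_ix_k^\alpha x_{k+1}^z$, i.e. a lattice point $(y,z)$ with $y=\alpha k+i$; the leading terms of $\mathcal B$ cut off the whole vertical strip $y\geq s_\mu$ (at every height $z$), those of $\mathcal C$ additionally cut off $y\geq s_\mu-s_{\mu+1}$ at heights $z\geq p_{\mu+1}-p_\mu$, and $x_{k+1}^{p_{\mu+1}}$ cuts off $z\geq p_{\mu+1}$. Since $s_i$ is decreasing and $p_i$ is increasing, the surviving points are the two rectangles in (i): the full strip $0\leq y<s_\mu-s_{\mu+1}$, $0\leq z<p_{\mu+1}$, together with $s_\mu-s_{\mu+1}\leq y<s_\mu$, $0\leq z<p_{\mu+1}-p_\mu$. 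Here I would carry out the case distinctions $\rho_\mu=0$, $\widetilde\rho=0$ and $s_{\mu+1}=0$ to verify that each boundary lands exactly where claimed (for instance, checking via the $\mathcal A$-reduced shape $L_ix_k^\alpha$ that $\mathcal B$ kills a point precisely when $y\geq s_\mu$, and that $\mathcal C=\emptyset$ when $s_{\mu+1}=0$ degenerates the picture to a single rectangle). This bookkeeping of leading terms against the staircase is the main obstacle; everything else is formal.

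Finally I would close by counting. The two rectangles contain
\[
(s_\mu-s_{\mu+1})\,p_{\mu+1}+s_{\mu+1}(p_{\mu+1}-p_\mu)=s_\mu p_{\mu+1}-s_{\mu+1}p_\mu
\]
lattice points, which equals $a$ by the relation $s_ip_{i+1}-s_{i+1}p_i=a$ taken at $i=\mu$. On the other hand $\widetilde{\Ap(S,a)}$ is by definition the set of monomials of $R'$ outside $\In(I)$, and $\card\widetilde{\Ap(S,a)}=a$ by Corollary \ref{c10}(i). Because $\mathcal G\subseteq I$ we have $\In(\mathcal G)\subseteq\In(I)$, so $\widetilde{\Ap(S,a)}$ is contained in the set of standard monomials of $\mathcal G$ lying in $R'$; both sets have exactly $a$ elements, hence they coincide, which is precisely statement (i). To upgrade this to (ii) I would note that $x_0\mapsto t^a$ is a nonzerodivisor on $R/I=K[S]$, so for the reverse-lexicographic order with $x_0$ smallest the variable $x_0$ divides no minimal generator of $\In(I)$; thus $\In(I)$, like $\In(\mathcal G)$, is extended from $R'$, and the equality of their $R'$-parts forces $\In(\mathcal G)=\In(I)$. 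Therefore $\mathcal G$ is a Gr\"obner basis of $I$.
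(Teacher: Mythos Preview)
The paper does not actually prove Theorem \ref{AAG-1}; it is quoted from \cite[Section 5]{M-D} as part of the preliminary material recalled without argument. So there is no proof in this paper to compare against.

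That said, your proposal is correct and is exactly the kind of argument one uses for such statements. The cardinality strategy is the right one: once you verify that the leading terms of $\mathcal A\cup\mathcal B\cup\mathcal C\cup\mathcal D$ cut out precisely the staircase in the $(y,z)$-plane with $s_\mu p_{\mu+1}-s_{\mu+1}p_\mu=a$ surviving lattice points, the chain of inclusions $\widetilde{\Ap(S,a)}\subseteq\{\text{standard monomials of }\In(\mathcal G)\text{ in }R'\}$ together with $\card\widetilde{\Ap(S,a)}=a$ forces equality, and the nonzerodivisor argument lifts this to $\In(\mathcal G)=\In(I)$ in $R$. Your identification of the leading terms is accurate (the hypothesis $r'_\mu\geq h$ or $\rho_\mu=0$ is what guarantees the $x_0$-power in the second family of $\mathcal B$ is nonnegative, and Lemma \ref{widetildehorizontal0} gives $\widetilde r>h$ when $\widetilde\rho>0$, handling $\mathcal C$), and the translation of the divisibility conditions into the inequalities $y\geq s_\mu$, $y\geq s_\mu-s_{\mu+1}$ with $z\geq p_{\mu+1}-p_\mu$, and $z\geq p_{\mu+1}$ is exactly right. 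The one place to be careful is the boundary bookkeeping you flag yourself: checking that, say, $L_ix_k^\alpha$ is divisible by some leading term of $\mathcal B$ if and only if $\alpha k+i\geq s_\mu$ requires splitting on $i<\rho_\mu$ versus $i\geq\rho_\mu$, but this goes through cleanly. Whether the original proof in \cite{M-D} proceeds by this counting argument or by Buchberger's $S$-pair criterion I cannot tell from the present paper; your route is in any case the cleaner of the two.
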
 

 \begin{corollary} \label{c-AAG-1} Suppose that either  $r'_\mu\geq h$ or $ \rho_\mu =0$. With the  notations of Corollary \ref{orderGB}.
 Let $$ \mathcal U=\{M(y,z)\mid 0\leq y< s_{\mu }-s_{\mu+1}, z\geq p_{\mu+1 }\}, $$
$$\mathcal V=\{M(y,z)\mid  y\geq  s_{\mu }-s_{\mu+1}, z\geq p_{\mu+1 }-p_{\mu}\}, $$
$$\mathcal W=\{M(y,z)\mid  y\geq  s_{\mu }, 0\leq z< p_{\mu+1 }-p_{\mu}\}.$$ Then $\In(I)=\mathcal U\cup \mathcal V\cup \mathcal W.$ 
 \end{corollary}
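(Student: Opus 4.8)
The plan is to derive the description of $\In(I)$ directly from Theorem \ref{AAG-1}(i) together with the planar bookkeeping fixed in Corollary \ref{orderGB}, so that essentially nothing beyond a set-complement computation remains. By the very definition of $\widetilde{\Ap(S,a)}$, a monomial $M\in[[R']]$ lies in $\In(I)$ if and only if $M\notin \widetilde{\Ap(S,a)}$; hence the set of monomials of $R'$ belonging to $\In(I)$ is exactly the complement $[[R']]\setminus \widetilde{\Ap(S,a)}$. Since the assignment $(y,z)\mapsto M(y,z)$ of Corollary \ref{orderGB} is a bijection between $\fn^2$ and $[[R']]$, it suffices to read off this complement as a region of $\fn^2$ and to match it with $\mathcal{U}\cup\mathcal{V}\cup\mathcal{W}$, exactly as those three sets were defined.

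First I would rewrite the two families appearing in Theorem \ref{AAG-1}(i) in the $(y,z)$-coordinates via $y=\alpha k+i$. Writing for brevity $b=s_\mu-s_{\mu+1}$, $c=s_\mu$, $P=p_{\mu+1}$ and $Q=p_{\mu+1}-p_\mu$, the Apéry region $\widetilde{\Ap(S,a)}$ becomes the disjoint union of the two rectangles $[0,b)\times[0,P)$ and $[b,c)\times[0,Q)$ inside $\fn^2$. Here $0<b\le c$ because the sequence $(s_i)$ is decreasing (with $b=c$ precisely when $s_{\mu+1}=0$), and $0<Q<P$ because $(p_i)$ is increasing and $p_\mu\ge1$; these inequalities are what make the two rectangles genuinely disjoint and stacked as the theorem asserts.

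Next I would compute the complement in $\fn^2$ by splitting on the value of $y$. For $0\le y<b$ a point lies outside the Apéry region exactly when $z\ge P$, yielding $\mathcal{U}$; for $b\le y<c$ it lies outside exactly when $z\ge Q$; and for $y\ge c$ every height $z$ is outside. Collecting these strata and regrouping the last two shows that the complement equals
\[
\big([0,b)\times[P,\infty)\big)\ \cup\ \big([b,\infty)\times[Q,\infty)\big)\ \cup\ \big([c,\infty)\times[0,Q)\big),
\]
which is precisely $\mathcal{U}\cup\mathcal{V}\cup\mathcal{W}$ after substituting back the values of $b,c,P,Q$. Translating through $M(y,z)$ then gives the claimed equality $\In(I)=\mathcal{U}\cup\mathcal{V}\cup\mathcal{W}$ at the level of monomials of $R'$, consistent with the convention of Corollary \ref{orderGB}.

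The part that needs genuine care, rather than routine algebra, is the treatment of the degenerate configurations at the boundary of the table of sequences: the case $s_{\mu+1}=0$, where $b=c$ so that the middle stratum $[b,c)$ is empty (matching $\mathcal{C}=\emptyset$) and $\mathcal{V}$, $\mathcal{W}$ abut along $y=c$; and verifying that the standing hypothesis $r'_\mu\ge h$ or $\rho_\mu=0$ is exactly what licenses Theorem \ref{AAG-1}(i), so that no additional Apéry monomials intrude on the region above. Once these edge cases are dispatched, the complement computation is immediate and the corollary follows.
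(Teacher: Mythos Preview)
Your approach is correct and is exactly the paper's: the corollary is stated without proof as the immediate planar complement of the Ap\'ery region given in Theorem~\ref{AAG-1}(i). One minor wording point: the map $(y,z)\mapsto M(y,z)$ is a bijection onto the monomials of the form $L_i x_k^{\alpha} x_{k+1}^{z}$, not onto all of $[[R']]$ (monomials such as $x_1x_2$ lie in $\In(I)$ via $\mathcal A$ but are not of this shape), so the equality $\In(I)=\mathcal U\cup\mathcal V\cup\mathcal W$ is to be read in the planar representation set up in Corollary~\ref{orderGB}, which is precisely how you then use it.
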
 

 From now on we will use  the  notations introduced in Corollaries \ref{orderGB}, \ref{c-AAG-1}.
 \begin{lemma}\label{AAG-2} Suppose that either  $r'_\mu\geq h$ or $ \rho_\mu =0$. 
 	 Let $M\in \In(I)$, if either 
 	\item $r'_{\mu+1 }<0$ and $M\in \mathcal U$, or
  \item $M\in \mathcal V$, or 	
\item $ r'_\mu>h $ or $ \rho_\mu =0$ and  $M\in \mathcal W$, \\ 
 then there exists $\alpha>0 $ and a monomial $N$ such that $M-x_0^\alpha N\in I$. 
 \end{lemma}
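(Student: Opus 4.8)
The plan is to read off the conclusion from a single reduction step against the Gr\"obner basis $\mathcal G=\mathcal A\cup\mathcal B\cup\mathcal C\cup\mathcal D$ of Theorem \ref{AAG-1}. The mechanism is the following: suppose $g\in\mathcal G$ can be written $g=\In(g)-x_0^{\beta}N'$ with $\beta>0$ and $N'\in[[R']]$ (that is, its trailing monomial carries a positive power of $x_0$), and suppose the leading monomial $\In(g)$ divides $M$, say $M=\In(g)\cdot M'$ with $M'\in[[R']]$. Then $M-x_0^{\beta}(N'M')=M'g\in I$, so $\alpha=\beta$ and $N=N'M'$ settle the claim. Hence in each of the three situations it suffices to exhibit a generator of $\mathcal G$ whose leading monomial divides $M=M(y,z)$ and whose trailing monomial is divisible by $x_0$; the hypothesis attached to each case is exactly what forces the $x_0$-exponent of that trailing monomial to be positive. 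For $M\in\mathcal U$ one has $z\geq p_{\mu+1}$, so the leading monomial $x_{k+1}^{p_{\mu+1}}$ of the element of $\mathcal D$ divides $M$; its trailing monomial is $x_0^{-r'_{\mu+1}}L_{\rho_{\mu+1}}x_k^{\sigma_{\mu+1}}$, whose $x_0$-exponent $-r'_{\mu+1}$ is positive precisely because the hypothesis of this case is $r'_{\mu+1}<0$.

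For $M=L_ix_k^{\alpha}x_{k+1}^{z}\in\mathcal W$ (so $y=\alpha k+i\geq s_\mu$, whence $\alpha\geq\sigma_\mu$) I would argue by subcases on how $M$ meets the leading monomials of $\mathcal B$. If $\rho_\mu=0$ then $x_k^{\sigma_\mu}$ divides $M$ and its trailing monomial $x_0^{r'_\mu}x_{k+1}^{p_\mu}$ is divisible by $x_0$ because $r'_\mu>0$ by the definition of $\mu$. If $\rho_\mu\neq 0$: when $\alpha=\sigma_\mu$ then $i\geq\rho_\mu$, and the relevant generator has leading monomial $x_ix_k^{\sigma_\mu}$ with trailing monomial $x_0^{r'_\mu}x_{k+1}^{p_\mu}$ (for $i=\rho_\mu$) or $x_0^{r'_\mu-h}x_{i-\rho_\mu}x_{k+1}^{p_\mu}$ (for $i>\rho_\mu$); when $\alpha\geq\sigma_\mu+1$ the generator with leading monomial $x_k^{\sigma_\mu+1}$ and trailing monomial $x_0^{r'_\mu-h}x_{k-\rho_\mu}x_{k+1}^{p_\mu}$ applies. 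Every trailing monomial carrying the exponent $r'_\mu-h$ is divisible by $x_0$ exactly under the hypothesis $r'_\mu>h$, while the remaining one needs only $r'_\mu>0$; the subcase $\alpha<\sigma_\mu$ cannot occur, since it would force $y<s_\mu$.

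For $M\in\mathcal V$, assume first $s_{\mu+1}\neq 0$, so $\mathcal C\neq\emptyset$. Here $z\geq p_{\mu+1}-p_\mu$, so $x_{k+1}^{p_{\mu+1}-p_\mu}$ divides $x_{k+1}^{z}$, and $y\geq s_\mu-s_{\mu+1}=\widetilde\sigma k+\widetilde l\,\widetilde\rho$; running the same subcase analysis as for $\mathcal W$ with $(\widetilde\sigma,\widetilde\rho)$ in place of $(\sigma_\mu,\rho_\mu)$ shows that one of the leading monomials of $\mathcal C$ divides $M$. Their trailing monomials are $x_0^{\widetilde r}$ and $x_0^{\widetilde r-h}x_j$; by Lemma \ref{widetildehorizontal0} (which gives $\widetilde r>h$ whenever $\widetilde\rho>0$, i.e. whenever a $\widetilde r-h$ term occurs) together with $\widetilde r\geq 2$, each is divisible by $x_0$, so no extra hypothesis is needed. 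In the degenerate case $s_{\mu+1}=0$ (where $\mathcal C=\emptyset$ and $s_\mu-s_{\mu+1}=s_\mu$) one has $\mathcal V\subseteq\{M(y,z)\mid y\geq s_\mu\}$ and one reduces by $\mathcal B$ exactly as in the case $\mathcal W$; should that $\mathcal B$-step use an $x_0$-free trailing monomial $x_{i-\rho_\mu}x_{k+1}^{p_\mu}$ (possible only when $r'_\mu=h$), the resulting monomial has $x_{k+1}$-exponent $z+p_\mu\geq p_{\mu+1}$, and a second step through $\mathcal D$ finishes because $s_{\mu+1}\leq k$ forces $r'_{\mu+1}<0$.

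The conceptual point, that one reduction against $\mathcal G$ already produces a binomial of the required shape, is immediate; the real labour is the combinatorial bookkeeping of the previous two paragraphs, namely verifying that the leading monomials of $\mathcal B$ and $\mathcal C$ sweep out the regions $\mathcal W$ and $\mathcal V$ in the $(y,z)$-coordinates of Corollary \ref{orderGB}, and selecting in each position a generator whose trailing monomial retains $x_0$. The only genuinely delicate spot is the boundary between the ``first'' generator of $\mathcal B$ (safe, needing only $r'_\mu>0$) and its ``variant'' generators (needing $r'_\mu>h$): this is what makes the strict inequality $r'_\mu>h$ the natural hypothesis for $\mathcal W$, and it is also why the degenerate $\mathcal V$-case must be closed by a follow-up $\mathcal D$-reduction rather than by a single step.
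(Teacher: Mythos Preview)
The paper does not prove Lemma~\ref{AAG-2}; it is one of several results explicitly \emph{recalled} from \cite[Section~5]{M-D} and stated here without argument. So there is no in-paper proof to compare against.

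Your argument is correct and is exactly the natural one: each of the three regions $\mathcal U,\mathcal V,\mathcal W$ is covered by the leading monomials of one of the blocks $\mathcal D,\mathcal C,\mathcal B$ of the Gr\"obner basis of Theorem~\ref{AAG-1}, and the listed hypotheses are precisely what force the $x_0$-exponent on the corresponding trailing monomial to be positive. Your handling of the degenerate case $s_{\mu+1}=0$ in $\mathcal V$ (where $\mathcal C=\emptyset$ and a single $\mathcal B$-reduction may produce an $x_0$-free trailing monomial when $r'_\mu=h$) via a follow-up $\mathcal D$-step is the right fix, and your appeal to the remark that $s_{\mu+1}\leq k$ forces $r'_{\mu+1}<0$ is exactly how the paper uses this implication elsewhere. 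The invocation of Lemma~\ref{widetildehorizontal0} for $\widetilde r>h$ when $\widetilde\rho>0$, together with the standing observation $\widetilde r\geq 2$, cleanly closes the $\mathcal C$-case. Nothing is missing.
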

 
 \medskip
 The following lemma allows us to  compare $\widetilde r$ and $r'_\mu - r' _{\mu +1}$.
 
 \begin{lemma}\label{widetildehorizontal}  \begin{enumerate}
 		\item If $\rho_\mu=\rho_{\mu+1}=0$ then   $\widetilde r  =r'_\mu- r' _{\mu+1}.$
 		\item If $\rho_\mu=0,\rho_{\mu+1}>0$ then   $\widetilde r -h =r'_\mu- r' _{\mu+1}$.
 		\item If $\rho_\mu>0,\rho_{\mu+1}=0$ then   $\widetilde r  =r'_\mu- r' _{\mu+1}.$
 		\item If $\rho_\mu>0,\rho_{\mu+1}>0$ and $0<\rho _\mu<\rho _{\mu+1}$ then  $\widetilde r=r'_\mu- r' _{\mu+1}.$
 		\item If $\rho_\mu>0,\rho_{\mu+1}>0$ and  $ 0<\rho _\mu=\rho _{\mu+1}$ then  $\widetilde r  =r'_\mu- r' _{\mu+1}.$
 		\item If $\rho_\mu>0,\rho_{\mu+1}>0$ and  $ \rho _\mu>\rho _{\mu+1}>0$ then $\widetilde r-h =r'_\mu- r' _{\mu+1}$.
 	\end{enumerate}
 \end{lemma}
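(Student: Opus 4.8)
The plan is to collapse all six cases to a single arithmetic identity and then dispatch them by the sign of one remainder term. First I would substitute the defining formulas $r'_i = r_i + h(\sigma_i + l_i)$ and $\widetilde r = (r_\mu - r_{\mu+1}) + h(\widetilde\sigma + \widetilde l)$. The terms $r_\mu - r_{\mu+1}$ cancel, so the quantity to control is purely combinatorial:
\[
\widetilde r - (r'_\mu - r'_{\mu+1}) = h\,B, \qquad B \defeq (\widetilde\sigma + \widetilde l) - (\sigma_\mu - \sigma_{\mu+1}) - (l_\mu - l_{\mu+1}).
\]
Thus the lemma is equivalent to showing $B = 0$ in cases (1), (3), (4), (5) and $B = 1$ in cases (2), (6): in the former $\widetilde r = r'_\mu - r'_{\mu+1}$, and in the latter $\widetilde r - h = r'_\mu - r'_{\mu+1}$.

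Next I would compute $B$ from the Euclidean representation of $s_\mu - s_{\mu+1}$. Subtracting $s_i = \sigma_i k + l_i \rho_i$ gives
\[
s_\mu - s_{\mu+1} = (\sigma_\mu - \sigma_{\mu+1})k + \delta, \qquad \delta \defeq l_\mu \rho_\mu - l_{\mu+1}\rho_{\mu+1},
\]
and this must coincide with the normalized form $\widetilde\sigma k + \widetilde l \widetilde\rho$ having $0 \le \widetilde\rho < k$. The key observation is that $(\widetilde\sigma, \widetilde l)$ is governed solely by whether $\delta$ forces a borrow. If $\delta = 0$ then $(\widetilde\sigma, \widetilde l) = (\sigma_\mu - \sigma_{\mu+1}, 0)$; if $0 < \delta < k$ then $(\widetilde\sigma, \widetilde l) = (\sigma_\mu - \sigma_{\mu+1}, 1)$; and if $-k < \delta < 0$ one borrow is forced, so that $\widetilde\rho = k + \delta \in (0,k)$ and $(\widetilde\sigma, \widetilde l) = (\sigma_\mu - \sigma_{\mu+1} - 1, 1)$. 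Substituting into the formula for $B$ yields the compact conclusion $B = -(l_\mu - l_{\mu+1})$ whenever $\delta \le 0$ and $B = 1 - (l_\mu - l_{\mu+1})$ whenever $0 < \delta < k$.

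Finally I would read off the six cases by recording, for each, the sign of $\delta$ and the pair $(l_\mu, l_{\mu+1})$, all of which are fixed by the hypotheses on $\rho_\mu, \rho_{\mu+1}$ (recall $l_i = 0$ exactly when $\rho_i = 0$). In cases (1) and (5) we have $\delta = 0$ with $l_\mu = l_{\mu+1}$, giving $B = 0$; in case (3), $\delta = \rho_\mu \in (0,k)$ with $(l_\mu, l_{\mu+1}) = (1,0)$, giving $B = 1 - 1 = 0$; in case (4), $\delta = \rho_\mu - \rho_{\mu+1} \in (-k, 0)$ with $l_\mu = l_{\mu+1} = 1$, giving $B = 0$; in case (2), $\delta = -\rho_{\mu+1} \in (-k, 0)$ with $(l_\mu, l_{\mu+1}) = (0,1)$, giving $B = -(0 - 1) = 1$; and in case (6), $\delta = \rho_\mu - \rho_{\mu+1} \in (0,k)$ with $l_\mu = l_{\mu+1} = 1$, giving $B = 1 - 0 = 1$.

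I do not expect a genuine obstacle here; the entire content is the bookkeeping reconciling the borrow in $\widetilde\sigma$ with the change in the $l$-values. The one step I would verify with care is that in each borrow case the strict inequalities assumed on $\rho_\mu, \rho_{\mu+1}$ indeed force $0 < k + \delta < k$, so that $\widetilde\rho = k + \delta$ is legitimately the normalized remainder; this is what guarantees the representation used to read off $(\widetilde\sigma, \widetilde l)$ is the correct one.
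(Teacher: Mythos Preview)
Your proposal is correct and follows essentially the same approach as the paper: both arguments reduce to computing $(\widetilde\sigma,\widetilde\rho,\widetilde l)$ from the Euclidean representation of $s_\mu-s_{\mu+1}$ and plugging into the definitions of $\widetilde r$ and $r'_i$. The only difference is organizational---the paper treats the six cases separately, whereas you compress them into the single identity $\widetilde r-(r'_\mu-r'_{\mu+1})=hB$ and a two-clause formula for $B$ in terms of the sign of $\delta=\rho_\mu-\rho_{\mu+1}$ and the pair $(l_\mu,l_{\mu+1})$; this is a tidy repackaging of the same computation.
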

 
 \begin{proof} 
 	Recall that 
 	$\widetilde r :=r_\mu-r_{\mu+1}+h( \widetilde \sigma +\widetilde l )$, 
 	$r'_\mu- r' _{\mu+1}=r_\mu-r_{\mu+1}+h(  \sigma _{\mu}-\sigma _{\mu+1}+l_\mu-l_{\mu+1})$. We have six cases.
 	\begin{enumerate}
 		\item If $\rho_\mu=\rho_{\mu+1}=0$  then  $\rho _\mu=\rho _{\mu+1}=\widetilde \rho=0, \widetilde l=0,\widetilde \sigma =\sigma _\mu-\sigma _{\mu+1}$. Hence  $\widetilde r  =r'_\mu- r' _{\mu+1}.$
 		\item If $\rho_\mu=0,\rho_{\mu+1}>0$ then  $\rho _\mu=0,\rho _{\mu+1}>0,\widetilde \rho=k-\rho _{\mu+1}>0,\widetilde l=1,$ and  $\widetilde \sigma=\sigma _\mu-\sigma _{\mu+1}-1$. Hence $\widetilde r -h =r'_\mu- r' _{\mu+1}$.
 		\item If $\rho_\mu>0,\rho_{\mu+1}=0$  then  $\rho _\mu=\widetilde \rho, \widetilde l=1,\widetilde \sigma =\sigma _\mu-\sigma _{\mu+1}$. Hence  $\widetilde r  =r'_\mu- r' _{\mu+1}.$
 		\item If $\rho_\mu>0,\rho_{\mu+1}>0$ and $0<\rho _\mu<\rho _{\mu+1}$ then $  \widetilde \rho=k+\rho_{\mu}-\rho _{\mu+1}>0, \widetilde l=1,$ and $\widetilde \sigma=\sigma _\mu-\sigma _{\mu+1}-1\geq 0$. Hence $\widetilde r=r'_\mu- r' _{\mu+1}.$
 		\item If $\rho_\mu>0,\rho_{\mu+1}>0$ and  $ 0<\rho _\mu=\rho _{\mu+1}$ then $ \widetilde \rho=0, \widetilde l=0, \widetilde \sigma=\sigma _\mu-\sigma _{\mu+1}\geq 0$.  Hence $\widetilde r  =r'_\mu- r' _{\mu+1}.$
 		\item If $\rho_\mu>0,\rho_{\mu+1}>0$ and  $ \rho _\mu>\rho _{\mu+1}>0$ then $ \widetilde \rho=\rho _\mu-\rho _{\mu+1}>0, \widetilde l=1,$ and $ \widetilde \sigma=\sigma _\mu-\sigma _{\mu+1}\geq 0.$ Hence $\widetilde r-h =r'_\mu- r' _{\mu+1}$.
 	\end{enumerate}
 \end{proof} 
 
 For a description of the Apéry set and a formula for the Frobenius number in terms of the sequences $s_i,p_i,r_i$ (see \cite[section 5]{M-D}).
 
\subsection{Pseudo Frobenius set of AAG-semigroups}

Now we consider pseudo Frobenius set of AAG-semigroups.
 When $h=1$, the case $S$  symmetric was studied  in \cite{RR} and the case $S$ is pseudo symmetric was studied in \cite{GRR}. In our work we will describe the pseudo Frobenius set and characterize when $S$ is  almost symmetric in general for  $h \geq 1 $. In this paper we restrict to the hypothesis  $r'_\mu\geq h$ or $ \rho_\mu =0$. Note that $r'_\mu\geq h$ is satisfied if $h=1$.  
 Let recall the following lemma.
 
 \begin{lemma}\label{lPSF} (\cite[Corollary 5.9]{M-D})
 	With the above notations, suppose that either  $r'_\mu\geq h$ or $ \rho_\mu =0$. 
 	Let  $\widetilde{PF(S)}_1 $ be  the set of  monomials in $\widetilde{PF(S)}$ such that the power of $x_{k+1}$ is  $p_{\mu+1 }-1$ and let $\widetilde{PF(S)}_2  $ be  the set of  monomials in $\widetilde{PF(S)}$ such that the power of $x_{k+1}$ is  $p_{\mu+1 }-p_{\mu} -1$. We have
 	$\widetilde{PF(S)}=\widetilde{PF(S)}_1\cup \widetilde{PF(S)}_2.$ 
 \end{lemma}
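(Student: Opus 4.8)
The plan is to combine the explicit description of the monomial Apéry set in Theorem \ref{AAG-1}(i) with the maximality property recorded in Corollary \ref{c10}(iii), reading everything off the planar $(y,z)$ picture of Corollary \ref{orderGB}. The underlying observation is that multiplication by the last variable $x_{k+1}$ acts very simply on this picture: it sends the monomial $M(y,z)$ to $M(y,z+1)$, increasing the height $z$ by one and leaving $y$ untouched. So whether $x_{k+1}M$ stays in the Apéry set is governed entirely by the height of the column above the fixed value $y$.

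First I would fix $M \in \widetilde{PF(S)}$ and write $M = M(y,z)$; since $\widetilde{PF(S)} \subseteq \widetilde{\Ap(S,a)}$, the point $(y,z)$ lies in the L-shaped region of Theorem \ref{AAG-1}(i), namely either (A) $0 \le y < s_\mu - s_{\mu+1}$ with $0 \le z < p_{\mu+1}$, or (B) $s_\mu - s_{\mu+1} \le y < s_\mu$ with $0 \le z < p_{\mu+1}-p_\mu$. Next, by the contrapositive of the maximality statement in Corollary \ref{c10}(iii) applied to $i = k+1$, membership $M \in \widetilde{PF(S)}$ forces $x_{k+1}M = M(y,z+1) \notin \widetilde{\Ap(S,a)}$; that is, $(y,z+1)$ must lie outside the Apéry region. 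In case (A) the column above $y$ reaches up to height $p_{\mu+1}-1$, and since $y < s_\mu - s_{\mu+1}$ keeps $(y,z+1)$ out of the lower block (B), the point $(y,z+1)$ escapes the region exactly when $z = p_{\mu+1}-1$. In case (B) the $y$-range is disjoint from that of (A), so the column only reaches height $p_{\mu+1}-p_\mu-1$ and $(y,z+1)$ escapes exactly when $z = p_{\mu+1}-p_\mu-1$. Collecting the two cases shows that the $x_{k+1}$-exponent of any $M \in \widetilde{PF(S)}$ equals either $p_{\mu+1}-1$ or $p_{\mu+1}-p_\mu-1$, which is precisely the assertion $\widetilde{PF(S)} = \widetilde{PF(S)}_1 \cup \widetilde{PF(S)}_2$.

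I expect the only real subtlety to be bookkeeping rather than a genuine obstacle: one must handle the boundary column $y = s_\mu - s_{\mu+1}$ with care, as it belongs to block (B) and not (A), and one should note that the two target heights are genuinely different, since $\mu > 0$ gives $p_\mu \ge 1$ and hence $p_{\mu+1}-p_\mu-1 < p_{\mu+1}-1$, so that $\widetilde{PF(S)}_1$ and $\widetilde{PF(S)}_2$ are disjoint. It is worth emphasising what this argument does and does not deliver: maximality yields only a \emph{necessary} condition on the height $z$, which is all the present lemma asks for; it does not pin down which monomials of these two top rows are actually pseudo-Frobenius, a finer question to be settled afterwards, and where the standing hypothesis $r'_\mu \ge h$ or $\rho_\mu = 0$ (already built into Theorem \ref{AAG-1}) will be used in earnest.
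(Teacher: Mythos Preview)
The paper does not supply its own proof of this lemma; it is simply recalled from \cite[Corollary~5.9]{M-D}. Your argument is correct and is exactly the natural one given the ingredients assembled in this paper: the planar description of $\widetilde{\Ap(S,a)}$ from Theorem~\ref{AAG-1}(i), the observation that $x_{k+1}\cdot M(y,z)=M(y,z+1)$, and the maximality criterion Corollary~\ref{c10}(iii) with $i=k+1$, whose contrapositive forces any $M\in\widetilde{PF(S)}$ to sit at the top of its column in the L-shaped region. Your handling of the two blocks (A) and (B) and of the boundary column $y=s_\mu-s_{\mu+1}$ is accurate, and your closing remark that this gives only a necessary condition on $z$ (the finer analysis being deferred to Theorem~\ref{tPSF}) is well placed.
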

 
 In the proof of Theorem \ref{tPSF}, we will need the following remark.
  \begin{remark}\label{rPF}{\rm (a) Let $i\in \{1,\dots ,k\}, j\in \{1,\dots ,k+1\}$, $x_iM\in \widetilde{\Ap(S,a)}$.  If  $i+j\leq k$ then  $x_jx_iM-x_0^hx_{i+j}M \in I$ since $x_ix_j- x_0^h x_{i+j}\in I $. So in order to check if $x_iM$  belongs to $\widetilde{PF(S)}$  we need only to consider $j$ such that $i+j> k$.
 		
 		\item(b) Since  
 		$ s_{\mu }-s_{\mu+1} = (\sigma_{\mu }-\sigma_{\mu+1})k + \rho_{\mu }-\rho_{\mu+1}$, so if $\rho_{\mu+1 }\leq \rho_{\mu }$ then we have 
 		$\widetilde{\sigma  }=\sigma_{\mu }-\sigma_{\mu+1}, \widetilde{\rho }=\rho_{\mu }-\rho_{\mu+1}$ and if 
 		$\rho_{\mu+1 }> \rho_{\mu }$ then we have 
 		$\widetilde{\sigma  }=\sigma_{\mu }-\sigma_{\mu+1}-1, \widetilde{\rho }=k+\rho_{\mu }-\rho_{\mu+1}$.}
 		 \end{remark}
 	 
 In the next theorem we describe the pseudo Frobenius set, it will be very useful to study symmetric and almost symmetric AAG-semigroups. 
 
 \begin{theorem}\label{tPSF} With the above notations, assume that either  $r'_\mu\geq h$ or $ \rho_\mu =0$.
 	We have
 	\begin{enumerate}\item  Suppose $r'_ {\mu +1 }=0$.
 		\begin{enumerate}
 			\item If $ \rho_{\mu+1 }=0 $ then $\widetilde{PF(S)}_1=\emptyset .$
 			\item If $\rho_{\mu+1 }>0,\widetilde{\rho }=0   $  then 
 			$\displaystyle \widetilde{PF(S)}_1=\{x_i x_k ^{\widetilde{\sigma  }-1 } x_{k+1}^{p_{\mu+1 } -1}, i=1,\dots ,k-\rho_{\mu+1 }  \}.$
 			\item If $\rho_{\mu+1 }>0, \widetilde{\rho }=1, \widetilde{\sigma }=0$ then 
 			$\widetilde{PF(S)}_1=\emptyset .$ 
 			\item If $\rho_{\mu+1 }>0,\widetilde{\rho }=1, \widetilde{\sigma }>0$  then \\
 			$\displaystyle \widetilde{PF(S)}_1=\{x_i x_k ^{\widetilde{\sigma  }-1 } x_{k+1}^{p_{\mu+1 } -1},i=1,\dots ,k-\rho_{\mu+1 }  \}.$
 			\item If $\rho_{\mu+1 }>0, \widetilde{\rho }>1$ then \\
 			$\displaystyle \widetilde{PF(S)}_1=\{x_i x_k ^{\widetilde{\sigma  } } x_{k+1}^{p_{\mu+1 } -1}, i=1,\dots ,\min{\widetilde{\rho }-1,k-\rho _{\mu+1 }  } \}.$
 		\end{enumerate} 
 		\item Suppose $r'_ {\mu +1 }<0$.
 		\begin{enumerate}
 			\item If $\widetilde{\rho }=0$ then $\displaystyle \widetilde{PF(S)}_1=\{x_i x_k ^{\widetilde{\sigma  }-1 } x_{k+1}^{p_{\mu+1 } -1}, i=1,\dots ,k-1  \}.$
 			\item If $\widetilde{\rho }=1, \widetilde{\sigma }=0 $ then 
 			$\displaystyle \widetilde{PF(S)}_1=\{ x_{k+1}^{p_{\mu+1 } -1} \}.$
 			\item If $\widetilde{\rho }=1, \widetilde{\sigma }>0 $ then $\displaystyle \widetilde{PF(S)}_1=\{x_i x_k ^{\widetilde{\sigma  }-1 } x_{k+1}^{p_{\mu+1 } -1}, i=1,\dots ,k  \}.$
 			\item If $\widetilde{\rho }>1$ then $\displaystyle \widetilde{PF(S)}_1=\{x_i x_k ^{\widetilde{\sigma  } } x_{k+1}^{p_{\mu+1 } -1}, i=1,\dots ,\widetilde{\rho }-1 \}.$
 		\end{enumerate} 
 		\item If  $s_{\mu +1 }=0 $ then $\widetilde{PF(S)}_2=\emptyset $.
 		\item  Suppose $\rho_{\mu }=0 $.
 		\begin{enumerate}[label=(\roman*)]
 			\item If $s_{\mu+1} \geq  k-1$  then $\displaystyle \widetilde{PF(S)}_2=\{x_i x_k ^{\sigma_{\mu }-1} x_{k+1}^{p_{\mu+1 }-p_{\mu} -1}, i=1,\dots ,k-1  \}.$
 			\item If $s_{\mu+1} < k-1$  then $\displaystyle \widetilde{PF(S)}_2=\{x_i x_k ^{\sigma_{\mu }-1} x_{k+1}^{p_{\mu+1 }-p_{\mu} -1}, i=\widetilde{\rho },\dots ,k-1  \}.$
 		\end{enumerate} 
 		\item Suppose $\rho_{\mu }=1, r'_{\mu }>h$.
 		\begin{enumerate}[label=(\roman*)]
 			\item If  $ s_{\mu+1} \geq  k$  then $\displaystyle \widetilde{PF(S)}_2=\{x_i x_k ^{\sigma_{\mu }-1} x_{k+1}^{p_{\mu+1 }-p_{\mu} -1}, i=1,\dots ,k\}.$
 			\item If  $ 1<s_{\mu+1} <  k$ then $\displaystyle \widetilde{PF(S)}_2=\{x_i x_k ^{\sigma_{\mu }-1} x_{k+1}^{p_{\mu+1 }-p_{\mu} -1}, i=\widetilde{\rho },\dots ,k  \}.$
 			\item If  $ s_{\mu+1} =1$  then $\displaystyle \widetilde{PF(S)}_2=\{ x_k ^{\sigma_{\mu }} x_{k+1}^{p_{\mu+1 }-p_{\mu} -1} \}.$
 		\end{enumerate}
 		\item Suppose $\rho_{\mu }=1, r'_{\mu }=h$.
 		\begin{enumerate}[label=(\roman*)]
 			\item If $s_{\mu }- s_{\mu+1}=1$  then $\displaystyle \widetilde{PF(S)}_2=\{x_i x_k ^{\sigma_{\mu }-1} x_{k+1}^{p_{\mu+1 }-p_{\mu} -1}, i=1,\dots ,k\}.$
 			\item If  $1<s_{\mu }- s_{\mu+1}\leq s_{\mu }-k$  then $\displaystyle \widetilde{PF(S)}_2=\{x_1 x_k ^{\sigma_{\mu }-1} x_{k+1}^{p_{\mu+1 }-p_{\mu} -1}\}.$
 			\item If  $s_{\mu }-k<s_{\mu }- s_{\mu+1}$  then  $\widetilde{PF(S)}_2=\emptyset.$
 		\end{enumerate}
 		\item Suppose $\rho_{\mu }>1$.
 		\begin{enumerate}[label=(\roman*)]
 			\item If $ s_{\mu+1}\geq  \rho _{\mu }-1$  then $\displaystyle \widetilde{PF(S)}_2=\{x_i x_k ^{\sigma_{\mu }} x_{k+1}^{p_{\mu+1 }-p_{\mu} -1}, i=1,\dots ,\rho_{\mu }-1\}.$
 			\item If $ s_{\mu+1}<  \rho _{\mu }-1$  then $\widetilde{\rho }=\rho _{\mu }-s_{\mu+1}$ and \\  $\displaystyle \widetilde{PF(S)}_2=\{x_i x_k ^{\sigma_{\mu }} x_{k+1}^{p_{\mu+1 }-p_{\mu} -1}, i=\widetilde{\rho },\dots ,\rho_{\mu }-1\}.$
 		\end{enumerate}
 	\end{enumerate}
 \end{theorem}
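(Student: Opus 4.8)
The plan is to read off $\widetilde{PF(S)}$ directly from the planar description of $\widetilde{\Ap(S,a)}$ and $\In(I)=\mathcal U\cup\mathcal V\cup\mathcal W$ supplied by Corollaries \ref{orderGB} and \ref{c-AAG-1} (equivalently, the explicit shape of $\widetilde{\Ap(S,a)}$ from Theorem \ref{AAG-1}). By Lemma \ref{lPSF}, every element of $\widetilde{PF(S)}$ lies in one of two horizontal rows: the top row $z=p_{\mu+1}-1$ of the left block $\{0\le y<s_\mu-s_{\mu+1}\}$, giving $\widetilde{PF(S)}_1$, and the top row $z=p_{\mu+1}-p_\mu-1$ of the bottom strip $\{0\le y<s_\mu\}$, giving $\widetilde{PF(S)}_2$. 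So the task is to decide, for each monomial $M=L_ix_k^{\alpha}x_{k+1}^{z}$ in these two rows, whether $M\in\widetilde{PF(S)}$.

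For the membership test I would use the following dichotomy. By the maximality in Corollary \ref{c10}(iii), if $x_jM\in\widetilde{\Ap(S,a)}$ for a single $j\in\{1,\dots,k+1\}$ then $M\notin\widetilde{PF(S)}$; and by the definition of $\widetilde{PF(S)}$, the monomial $M$ belongs to it exactly when, for every such $j$, one has $x_jM\in\In(I)$ with normal form modulo the Gr\"obner basis $\mathcal G$ divisible by $x_0$. Remark \ref{rPF}(a) lets me discard every $j$ for which the relation $x_ix_j-x_0^hx_{i+j}$ of $\mathcal A$ applies, as those automatically supply the factor $x_0^h$; what remains are the boundary moves, namely multiplication by $x_{k+1}$ (raising $z$ by one) and the wrap-around moves $x_j$ with $i+j>k$ together with $x_k$ (raising $y$). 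Whenever the resulting monomial lands in $\mathcal V$, or in $\mathcal U$ with $r'_{\mu+1}<0$, or in $\mathcal W$ with $r'_\mu>h$ or $\rho_\mu=0$, Lemma \ref{AAG-2} provides the required $x_0$; the delicate configurations are precisely those in which the product lies in $\In(I)$ yet its normal form still loses the factor $x_0$, and these I must settle by hand from the explicit generators $\mathcal B,\mathcal C,\mathcal D$.

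For $\widetilde{PF(S)}_1$ (items (1) and (2)) the decisive move is multiplication by $x_{k+1}$, which sends $(y,p_{\mu+1}-1)$ into the row $z=p_{\mu+1}$, hence into $\mathcal U$ since $y<s_\mu-s_{\mu+1}$. If $r'_{\mu+1}<0$ this reduces to a multiple of $x_0$ by Lemma \ref{AAG-2}, so only the horizontal conditions survive and I obtain item (2); there the four sub-cases merely record when the shortest wrap-around move $x_{k-i+1}$ and the move $x_k$ first escape the left block, which occurs exactly for $\alpha\ge\widetilde\sigma$, or for $\alpha=\widetilde\sigma-1$ with $\widetilde\rho\le1$, the index-$0$ column being always excluded because $x_1$ keeps it inside. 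If instead $r'_{\mu+1}=0$, the generator $x_{k+1}^{p_{\mu+1}}-L_{\rho_{\mu+1}}x_k^{\sigma_{\mu+1}}$ of $\mathcal D$ carries no $x_0$, and after applying it an $x_0$ can appear only by combining $L_i$ with $L_{\rho_{\mu+1}}$ through $\mathcal A$, which succeeds iff $1\le i\le k-\rho_{\mu+1}$; this accounts for the extra truncation $i\le k-\rho_{\mu+1}$ and for the vanishing in sub-cases (1a) and (1c), giving item (1).

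For $\widetilde{PF(S)}_2$ (items (3)--(7)) two moves matter: multiplication by $x_{k+1}$, which must reach $\mathcal V$ and so forces $y\ge s_\mu-s_{\mu+1}$, and the rightward moves, which must reach $\mathcal W$ and so force $\alpha$ large enough that even the shortest wrap-around passes $s_\mu$, again excluding the index-$0$ column. When $s_{\mu+1}=0$ the strip degenerates and nothing survives (item (3)). Which rightward moves acquire an $x_0$ is dictated by $\mathcal B$: for $\rho_\mu=0$, for $\rho_\mu=1$ with $r'_\mu>h$, and for $\rho_\mu>1$, the relevant generators of $\mathcal B$ carry a positive power of $x_0$, so intersecting the two escape conditions produces the index ranges of items (4), (5) and (7), the split in each being whether the vertical cutoff $y\ge s_\mu-s_{\mu+1}$ pushes the lower index bound up to $\widetilde\rho$ or leaves it at $1$. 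The genuinely hard case is item (6), $\rho_\mu=1$ with $r'_\mu=h$: here the generators $x_{1+j}x_k^{\sigma_\mu}-x_jx_{k+1}^{p_\mu}$ of $\mathcal B$ have exponent $r'_\mu-h=0$ and carry no $x_0$, so a rightward move across this part of the boundary re-enters the Apéry region at small $y=j$ with no $x_0$; determining when an $x_0$ can nevertheless be salvaged, only through the single generator $x_1x_k^{\sigma_\mu}-x_0^hx_{k+1}^{p_\mu}$, is what forces the three-way split (i)--(iii) and the collapse to a single monomial or to $\emptyset$. This tracking of the $x_0$-free generators of $\mathcal B$ (and, for item (1), of $\mathcal D$) is the main obstacle; once the escape thresholds are written out, the remaining items reduce to a routine though lengthy enumeration.
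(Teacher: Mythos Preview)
Your outline is correct and follows essentially the same route as the paper: restrict to the two rows via Lemma \ref{lPSF}, discard the moves with $i+j\le k$ via Remark \ref{rPF}(a), invoke Lemma \ref{AAG-2} whenever the product lands in $\mathcal V$, in $\mathcal U$ with $r'_{\mu+1}<0$, or in $\mathcal W$ with $r'_\mu>h$ or $\rho_\mu=0$, and then track by hand the $x_0$-free reductions coming from $\mathcal D$ (when $r'_{\mu+1}=0$) and from $\mathcal B$ (when $\rho_\mu=1$ and $r'_\mu=h$). The paper organizes the casework slightly differently (first separating $s_\mu-s_{\mu+1}=1$ from $>1$ for $\widetilde{PF(S)}_1$, and first establishing the ambient candidate set $\{x_ix_k^\gamma x_{k+1}^z\}$ before testing the $x_{k+1}$-move for $\widetilde{PF(S)}_2$), but the substance is the same; one small imprecision in your sketch is the blanket claim that the index-$0$ column is ``always excluded'', which fails exactly in case 2(b) where that column is the whole row.
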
 
 
 \begin{proof}
 	We have  to consider all possible cases: \\ 
 	(1) Prove claims (1) and (2) for studying $ \widetilde{PF(S)}_1$.
 	\begin{enumerate}[label=\alph*)]
	\item  Suppose $s_ {\mu}-s_ {\mu+1}=1$. Then we can see that   $\widetilde{PF(S)}_1 \subset \{ x_{k+1}^{p_{\mu+1 } -1}\}$ and $x_{k+1}(x_{k+1}^{p_{\mu+1 }-1})-x_0^{-r' _{\mu+1 }}x_{\rho _{\mu+1 }}x_{k}^{\sigma _{\mu+1 }}\in I$.\\ 
	Suppose $r' _{\mu+1 }=0$, then we have 
 		$\varphi (x_{\rho _{\mu+1 }}x_{k}^{\sigma _{\mu+1 }})=s_{\mu+1}<s_{\mu}$. It implies that    
 		$x_{\rho _{\mu+1 }}x_{k}^{\sigma _{\mu+1 }}\in \widetilde{\Ap(S,a)}.$ Thus $\widetilde{PF(S)}_1 =\emptyset$.\\
 		Suppose $r' _{\mu+1 }<0$.  For  $j=1,\dots ,k$, 
 		  we have  $x_{j}x_{k+1}^{p_{\mu+1 } -1}\in \mathcal V$, so by Lemma \ref{AAG-2} there exists $\alpha  >0$ and a monomial $N$ such that $x_{j}M_i- x_0^\alpha N$. Thus $\widetilde{PF(S)}_1 = \{ x_{k+1}^{p_{\mu+1 } -1}\}$.
 		\item  \label{item1}Suppose $ s_ {\mu}-s_ {\mu+1}>1$. Then we have 
$\widetilde{PF(S)}_1 \subset \{x_i x_k ^{\gamma   } x_{k+1}^{p_{\mu+1 } -1}, i=1,\dots ,A \} $, where  
 
If $\widetilde{\rho }=0$ then $\gamma  = \widetilde{\sigma  }-1$, and $A=k-1$, 

 		If $\widetilde{\rho }=1, \widetilde{\sigma }>0$ then $\gamma  = \widetilde{\sigma  }-1$, and $A=k$, 
 		
 		 If $ \widetilde{\rho }>1$ then   $\gamma  = \widetilde{\sigma  },$ and $A=\widetilde{\rho }-1$.\\ 		  		
 		Let $M=x_k ^{\gamma } x_{k+1}^{p_{\mu+1 } -1}, j=1,\dots ,k+1.$ By Remark \ref{rPF}, we have to study only the monomial $x_{j}x_i M$ for $i+j>k$. If 
 		$j\leq k$ then we have $x_{j}x_i M\in \mathcal V$. So by Lemma \ref{AAG-2} there exists $\alpha  >0$ and a monomial $N$ such that $x_{j}x_i M- x_0^\alpha N$.
 		If 
 		$j=k+1$ then we have $x_{k+1}x_i M- x_0^{-r' _{\mu+1 }} x_i L_{\rho _{\mu+1 }}  x_{k}^{\sigma _{\mu+1 }}x_k ^{\gamma }\in I$.\\
It then follows that if $r' _{\mu+1 }>0$ then $\widetilde{PF(S)}_1 = \{x_i x_k ^{\gamma   } x_{k+1}^{p_{\mu+1 } -1}, i=1,\dots ,A \} $. Now we consider the case $r' _{\mu+1 }=0$. First note that $s_\mu = (\widetilde{\sigma  }+ \sigma _{\mu+1 } )k+ \rho_{\mu+1 } +\widetilde{\rho }$.
\begin{itemize}
\item If $\rho_{\mu+1 }=0$ then   $x_{k+1}x_i M- x_i  x_{k}^{\sigma _{\mu+1 }}x_k ^{\gamma }\in I$, and we have several cases:\\
 If  $\gamma  = \widetilde{\sigma  }-1, $ then $\varphi (x_i  x_{k}^{\sigma _{\mu+1 }}x_k ^{\gamma })=i+(\sigma _{\mu+1 }+\widetilde{\sigma  }-1)k=s_\mu +i-k-\widetilde{\rho }<s_\mu$. Thus  $x_i  x_{k}^{\sigma _{\mu+1 }}x_k ^{\gamma }\in \widetilde{\Ap(S,a)}$. Hence $\widetilde{PF(S)}_1=\emptyset$.\\
 If  $\gamma  =  \widetilde{\sigma  } $ then $ i+(\widetilde{\sigma  }+\sigma _{\mu+1 })k=s_\mu +i-\widetilde{\rho }< s_\mu $ since  $i<\widetilde{\rho }$, hence $\widetilde{PF(S)}_1=\emptyset$.

\item Now suppose $\rho_{\mu+1 }>0$. We have $x_{k+1}x_i M- x_0^{-r' _{\mu+1 }} x_i x_{\rho _{\mu+1 }}  x_{k}^{\sigma _{\mu+1 }}x_k ^{\gamma }\in I.$
 If $i+ \rho _{\mu+1 }\leq k$ then  $x_ix_{\rho _{\mu+1 }} -x_0^h x_{i+\rho _{\mu+1 }} \in I$, so $x_i M\in \widetilde{PF(S)}$. 
If $i+ \rho _{\mu+1 }> k$ then  $x_{k+1}x_i M-x_{i+\rho _{\mu+1 }-k} 
 x_{k}^{\sigma _{\mu+1 }+\gamma +1}\in I$.  We have to discuss several cases. \\
If  $\gamma  = \widetilde{\sigma  }-1, $ then $i+\rho _{\mu+1 }-k+(\widetilde{\sigma  }+\sigma _{\mu+1 })k= s_\mu +i-k-\widetilde{\rho }<s_\mu ,$ which implies that $x_{i+\rho _{\mu+1 }-k} x_k ^{\widetilde{\sigma  }+\sigma _{\mu+1 }} \in \widetilde{\Ap(S,a)}.$ Therefore $x_i M\not \in \widetilde{PF(S)}$. Hence 
 		$$\widetilde{PF(S)}_1 =\{x_i x_k ^{\widetilde{\sigma  }-1 } x_{k+1}^{p_{\mu+1 } -1}, i=1,\dots ,k-\rho_{\mu+1 }  \}.$$
 		If   $\gamma  =  \widetilde{\sigma  } $ then $i+\rho _{\mu+1 }-k+(\widetilde{\sigma  }+\sigma _{\mu+1 })k+k=s_\mu +i-\widetilde{\rho }< s_\mu $,
 		which implies that $x_{i+\rho _{\mu+1 }-k} x_k ^{\widetilde{\sigma  }+\sigma _{\mu+1 }+1} \in \widetilde{\Ap(S,a)}.$ Thus $x_i M\not \in \widetilde{PF(S)}$. Hence 
 		$$\widetilde{PF(S)}_1 = \{x_i x_k ^{\widetilde{\sigma  } } x_{k+1}^{p_{\mu+1 } -1}, i=1,\dots ,\min{\widetilde{\rho }-1,k-\rho _{\mu+1 }  }\}.$$
 \end{itemize}	
 \end{enumerate} 
 
 \noindent (2) Prove claims (3), (4), (5), (6) and (7) for studying   $\widetilde{PF(S)}_2$.
	\begin{enumerate}[label=\alph*)]
 \item  If $s_{\mu+1 }=0$ then the set $ \widetilde{\Ap(S,a)}   $ is represented by a rectangle, there is no element in $ \widetilde{PF(S)}  $ with power of $x_{k+1}$ equal to $p_{\mu+1 }-p_{\mu} -1.$ Hence $ \widetilde{PF(S)}_2 =\emptyset. $

\item  Suppose $s_{\mu+1 }>0$. We have $$\widetilde{PF(S)}_2  \subset \{x_i x_k ^{\gamma } x_{k+1}^{p_{\mu+1 }-p_{\mu} -1}, i=1,\dots ,A  \},$$ 
 	where 
 	
If $\rho_{\mu }=0$ then $\gamma =\sigma_{\mu }-1, A=k-1$.

If $\rho_{\mu }=1$ then $\gamma =\sigma_{\mu }-1, A=k$.

If $\rho_{\mu }>1$ then $\gamma =\sigma_{\mu }, A=\rho_\mu  -1$.\\
(A) Let $j=1,\dots ,k, i=1,\dots ,A, M=x_k ^{\gamma } x_{k+1}^{p_{\mu+1 }-p_{\mu} -1}$, we study the monomial $x_jx_iM$. By Remark \ref{rPF}, we can assume that  $i+j>k$, hence $x_jx_iM- x_{i+j-k}x_k ^{\gamma +1} x_{k+1}^{p_{\mu+1 }-p_{\mu} -1}\in I$. We have to consider three cases:
\begin{enumerate}[label=(\Roman*)]
\item If $\rho_{\mu }=0$ then $\gamma+1 =\sigma_{\mu }$,  $x_{i+j-k}x_k ^{\sigma_{\mu }}x_{k+1}^{p_{\mu+1 }-p_{\mu} -1}\in \mathcal W.$ So by Lemma \ref{AAG-2} there exists $\alpha  >0$ and a monomial $N$ such that $x_{j}x_iM- x_0^\alpha N$.
\item If $\rho_{\mu }=1$ then $\gamma+1 =\sigma_{\mu }$. If  $i+j-k=1$ then $x_{1}x_k ^{\sigma_{\mu }}-x_0^{r'_{\mu }}x_{k+1}^{p_{\mu}}\in I$, so we can assume $i+j-k\geq 2$.  
By using the \gbb of $I$ we have $x_{i+j-k}x_k ^{\sigma_{\mu }}x_{k+1}^{p_{\mu+1 }-p_{\mu} -1}-x_0^{r'_{\mu }-h}x_{i+j-k-1}x_{k+1}^{p_{\mu+1 } -1}\in I$. If  $r'_{\mu }>h$ we have  $x_{j}x_iM-x_0^{r'_{\mu }-h}x_{i+j-k-1}x_{k+1}^{p_{\mu+1 } -1}\in I$. So we suppose $r'_{\mu }=h$, hence  
$x_{j}x_iM-x_{i+j-k-1}x_{k+1}^{p_{\mu+1 } -1}\in I$. Let consider two cases. 
 \begin{enumerate}[label=(\roman*)]
 	\item Suppose  $s_{\mu }-s_{\mu+1}=1$.  Since $i+j-k-1\geq 1 $ we have $x_{i+j-k-1}x_{k+1}^{p_{\mu+1 } -1}\in \mathcal V$,  so by Lemma \ref{AAG-2}, for $j=1,\ldots,k$ there exists $\alpha  >0$ and a monomial $N$ such that $x_{j}x_iM- x_0^\alpha N$.
 
 \item  Suppose $s_{\mu }-s_{\mu+1}>1$. For  $i>1$, we set  $j=k+2-i$,  so 
 		$x_jx_iM-x_{1}  x_{k+1}^{p_{\mu+1 }-1} \in I$, but $x_{1}  x_{k+1}^{p_{\mu+1 }-1}\in \widetilde{\Ap(S,a) }$. Consequently for $i>1$  we have $x_iM\notin \widetilde{PF(S)}_2 $, that is   $\widetilde{PF(S)}_2  \subset \{x_1 x_k ^{\sigma_{\mu }-1} x_{k+1}^{p_{\mu+1 }-p_{\mu} -1} \}.$
 		\end{enumerate} 
\item If $\rho_{\mu }>1$ then $\gamma =\sigma_{\mu }$. We have $x_jx_iM- x_{i+j-k}x_k ^{\sigma_{\mu } +1} x_{k+1}^{p_{\mu+1 }-p_{\mu} -1}\in I$ and 
$x_k ^{\sigma_{\mu }+1} - x_0^{r'_\mu -h}x_{k-\rho _{\mu }} x_{k+1}^{p_{\mu }} \in I$. Hence 
$x_jx_iM- x_0^{r'_\mu -h}x_{i+j-k}x_{k-\rho _{\mu }} x_{k+1}^{p_{\mu }}x_{k+1}^{p_{\mu+1 }-p_{\mu} -1}\in I$. Since $i<\rho _{\mu }$ we have $i+j-k+k-\rho _{\mu }<j\leq k$, and  $x_{i+j-k}x_{k-\rho _{\mu }} -x_0^h x_{i+j-\rho _{\mu }}$. So for any $j=1,\ldots,k, i=1,\ldots,\rho _{\mu }-1$ we have 
$x_jx_iM- x_0^{r'_\mu }x_{i+j-\rho _{\mu }} x_{k+1}^{p_{\mu }}x_{k+1}^{p_{\mu+1 }-p_{\mu} -1}\in I$.
\end{enumerate} 
(B) Now let study the monomial $x_{k+1}x_iM=x_i x_k ^{\sigma_{\mu }-1} x_{k+1}^{p_{\mu+1 }-p_{\mu}}.$ We have again three cases:
\begin{enumerate}[label=(\Roman*)]
 		\item
Suppose $\rho_{\mu }=0$. We have $x_i x_k ^{\sigma_{\mu }-1} x_{k+1}^{p_{\mu+1 }-p_{\mu}}\in \mathcal V$ if and only if $i+(\sigma_{\mu }-1)k\geq s_{\mu}-s_{\mu+1},$ that is $i\geq k-s_{\mu+1}$. We have two cases.
 	\begin{enumerate}[label=(\roman*)]
 		\item If $ s_{\mu+1}\geq k-1$ then $i\geq k-s_{\mu+1}$ for $i=1,\ldots,k-1$, so by  Lemma \ref{AAG-2} and the case (A) we have $\widetilde{PF(S)}_2  =\{x_i x_k ^{\sigma_{\mu }-1} x_{k+1}^{p_{\mu+1 }-p_{\mu} -1}, i=1,\dots ,k-1  \}.$ Hence $$\widetilde{PF(S)}_2  =\{x_i x_k ^{\sigma_{\mu }-1} x_{k+1}^{p_{\mu+1 }-p_{\mu} -1}, i=1,\dots ,k-1  \}.$$
 		\item If $ s_{\mu+1}< k-1$ then  $s_{\mu }-s_{\mu+1}= (\sigma_{\mu }-1)k+(k-s_{\mu+1})$ so $\widetilde{\rho }=k-s_{\mu+1}.$
 		Hence by  Lemma \ref{AAG-2} and the case (A) we have 
 		$$\widetilde{PF(S)}_2  =\{x_i x_k ^{\sigma_{\mu }-1} x_{k+1}^{p_{\mu+1 }-p_{\mu} -1}, i=\widetilde{\rho },\dots ,k-1  \}.$$
 	\end{enumerate} 
\item Suppose $\rho_{\mu }=1$. We have $x_i x_k ^{\sigma_{\mu }-1} x_{k+1}^{p_{\mu+1 }-p_{\mu}}\in \mathcal V$ if and only if $i+(\sigma_{\mu }-1)k\geq s_{\mu}-s_{\mu+1},$ that is $i\geq k+1-s_{\mu+1}$.
We have to consider several cases.
 	\begin{enumerate}[label=(\roman*)]
 	\item  $ k+1-s_{\mu+1}\leq 1$, that is   $s_{\mu+1}\geq  k.$  Then  following the case (A) we have 
\begin{itemize}
\item   If either $ r'_\mu >h$ or  $ r'_\mu =h, s_{\mu }-s_{\mu+1}=1$  then we have  $$\widetilde{PF(S)}_2=\{x_i x_k ^{\sigma_{\mu }-1} x_{k+1}^{p_{\mu+1 }-p_{\mu} -1}, i=1,\dots ,k\}.$$
 \item  If $ r'_\mu =h, s_{\mu }-s_{\mu+1}>1$ then we have  $$\widetilde{PF(S)}_2=\{x_1 x_k ^{\sigma_{\mu }-1} x_{k+1}^{p_{\mu+1 }-p_{\mu} -1}\}.$$
 \end{itemize}
 		\item $1<s_{\mu+1}<  k $. We have $1<s_{\mu }-s_{\mu+1}= (\sigma_{\mu }-1)k+(k+1-s_{\mu+1})$ so $k+1-s_{\mu+1}=\widetilde{\rho }\geq 2$. That is
 		$x_i x_k ^{\sigma_{\mu }-1} x_{k+1}^{p_{\mu+1 }-p_{\mu}}\in \mathcal V$ for $i\geq \widetilde{\rho }\geq 2$.
\begin{itemize}
\item   If $ r'_\mu >h$  then we have $$\widetilde{PF(S)}_2  =\{x_i x_k ^{\sigma_{\mu }-1} x_{k+1}^{p_{\mu+1 }-p_{\mu} -1}, i=\widetilde{\rho },\dots ,k  \}.$$
 \item  If $ r'_\mu =h$ then we have  $\widetilde{PF(S)}_2=\emptyset.$
 \end{itemize}
 		\item $s_{\mu+1}=1 $. We have  $i+(\sigma_{\mu }-1)k\geq s_{\mu }-s_{\mu+1}=\sigma_{\mu }k$ if and only if $i=k$.
\begin{itemize}
\item   If $ r'_\mu >h$  then $$\widetilde{PF(S)}_2  =\{ x_k ^{\sigma_{\mu }} x_{k+1}^{p_{\mu+1 }-p_{\mu} -1} \}.$$
 \item  If $ r'_\mu =h$ then we have  $\widetilde{PF(S)}_2=\emptyset.$
 \end{itemize} 
 	\end{enumerate}
\item Suppose $\rho_{\mu }>1$. We have $x_i x_k ^{\sigma_{\mu }} x_{k+1}^{p_{\mu+1 }-p_{\mu}}\in \mathcal V$ if and only if  $i+\sigma_{\mu }k\geq s_{\mu }-s_{\mu+1 } $ that is $i \geq \rho _{\mu }-s_{\mu+1 } $. We have to consider two cases. 
 	\begin{enumerate}[label=(\roman*)]
 		\item If $\rho _{\mu }-s_{\mu+1 }\leq 1 $ then we have $i \geq \rho _{\mu }-s_{\mu+1 } $ for any $i\geq 1$.
 		Hence $$\widetilde{PF(S)}_2  =\{x_i x_k ^{\sigma_{\mu }} x_{k+1}^{p_{\mu+1 }-p_{\mu} -1}, i=1,\dots ,\rho_{\mu }-1\}.$$
 		
 		\item If $\rho _{\mu }-s_{\mu+1 } >1 $ then we have $\widetilde{\rho }=\rho _{\mu }-s_{\mu+1 }$ and 
 		$$\widetilde{PF(S)}_2  =\{x_i x_k ^{\sigma_{\mu }} x_{k+1}^{p_{\mu+1 }-p_{\mu} -1}, i=\widetilde{\rho },\dots ,\rho_{\mu }-1\}.$$
 	\end{enumerate}
\end{enumerate}	
\end{enumerate} 
 \end{proof} 

\section{Symmetric AAG-semigroups}
In this section we will describe all symmetric AAG-semigroups. 
Recall that we always assume  $(a,d)=1$. Symmetric semigroups generated by 4 elements are well known, see for example \cite{Br}, so we can assume $k\geq 3$. When $h=1$, the case $S$  symmetric was studied  in \cite{RR}.

 \begin{theorem}\label{tGor}  Let $S$ be an AAG semigroup. Suppose that $k\geq 3$ and either  $r'_\mu\geq h$ or $ \rho_\mu =0$. 
 We have  $S$ is symmetric if and only   if either 
 \begin{enumerate}  
 \item     $a=(\sigma k+2) p',\  d=p'r-p\hat r, \ c= -(\sigma k+2) \hat r ,$\\ where $\sigma\geq 1,1\leq p<p',  h\geq 1, \hat r<-1$ 
with $\gcd(p',\hat r)=1, r+h\sigma >0$. 
The Frobenius number is $(ha+d)+ \sigma (ha+kd)+ c(p'-1)-a$, or 
\item   $a=(\sigma k+2) p'- \sigma' k p, \ d=p'r+ph\sigma', \ c= \sigma' k r +(\sigma k+2)\sigma'h,$\\ where $\sigma,\sigma ', p',p , r,h$ are integers such that $\sigma\geq \sigma ' \geq 2, p'>p >0, r+h(\sigma+1) >0$. 
The Frobenius number is $(ha+d)+ \sigma (ha+kd)+ c(p'-p-1)-a $, or 
\item   $a=(\sigma k+2) p'- (\sigma k+1) p, \ d=p'r+ph(\sigma+1), \ c= (\sigma k +1) r +(\sigma k+2)(\sigma+1)h,$\\  where $\sigma,\sigma ', p',p , r,h$ are integers such that $\sigma\geq 1,p'>p >0, r+h(\sigma+1) >0$. 
   The Frobenius number is $(ha+d)+ \sigma (ha+kd)+ c(p'-p-1)-a $, or
\item  $a=(\sigma k+1)p'-(k-1)p , \ d=-p'h\sigma-p\hat r, \ c=-(k-1)h\sigma-(\sigma k+1)\hat r,$\\  where $\sigma\geq 1, p'>p>0,   \hat r<-h$.
The Frobenius number is $(ha+d)+ (\sigma-1) (ha+kd)+ c(p'-1)-a $.
\end{enumerate}  
\end{theorem}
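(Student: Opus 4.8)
The plan is to reduce symmetry to a cardinality count and then extract the four families directly from Theorem~\ref{tPSF}. First, since $t(S)=\card(PF(S))$ and, by Corollary~\ref{c10}(ii), $\varphi$ restricts to a bijection $\widetilde{PF(S)}\to PF(S)+a$, the semigroup $S$ is symmetric (that is $t(S)=1$) exactly when $\card(\widetilde{PF(S)})=1$. By Lemma~\ref{lPSF} we have $\widetilde{PF(S)}=\widetilde{PF(S)}_1\cup\widetilde{PF(S)}_2$, and this union is disjoint because its members carry the two distinct exponents $p_{\mu+1}-1$ and $p_{\mu+1}-p_\mu-1$ of $x_{k+1}$ (here $p_\mu\geq 1$, since $\mu\geq 1$ and the sequence $(p_i)$ is increasing with $p_1=1$). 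Hence $S$ is symmetric if and only if one of $\widetilde{PF(S)}_1,\widetilde{PF(S)}_2$ is a singleton and the other is empty.

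Next I would read off the cardinality of each set in every item of Theorem~\ref{tPSF}. Under the standing hypothesis $k\geq 3$, all the families of size $k-1$, $k$, or $k-\widetilde\rho+1$ (with $0\le\widetilde\rho<k$) are automatically at least $2$ and cannot contribute; this isolates the finitely many singleton sub-cases: on the $\widetilde{PF(S)}_1$ side, (1b)/(1d) with $\rho_{\mu+1}=k-1$, (1e) with the minimum equal to $1$, (2b), and (2d) with $\widetilde\rho=2$; on the $\widetilde{PF(S)}_2$ side, (4ii) with $\widetilde\rho=k-1$, (5iii), (6ii), (7i) with $\rho_\mu=2$, and (7ii) with $s_{\mu+1}=1$. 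For each such singleton I would impose that the complementary set be empty, the only available emptiness conditions being (1a), (1c), (3), (6iii). A short argument then discards every singleton whose monomial does \emph{not} have the form $x_1x_k^{\,e}x_{k+1}^{\,f}$: for instance in (2b) one has $\widetilde\sigma=0,\widetilde\rho=1$, i.e. $s_\mu-s_{\mu+1}=1$, and forcing $\widetilde{PF(S)}_2=\emptyset$ leads to $s_{\mu+1}=0$ and hence $s_\mu=1<k$, contradicting $s_\mu>k$; the non-$x_1$ singletons (4ii), (5iii), (7ii) are killed in the same way, because $\widetilde{PF(S)}_1$ is never empty when $r'_{\mu+1}<0$, while its emptiness for $r'_{\mu+1}=0$ demands $\rho_{\mu+1}=0$ or $(\widetilde\rho,\widetilde\sigma)=(1,0)$, each incompatible with $s_\mu>k$ in those configurations. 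Thus every surviving configuration has first index $i=1$, and its unique monomial is one of $x_1x_k^{\sigma}x_{k+1}^{p'-1}$, $x_1x_k^{\sigma}x_{k+1}^{p'-p-1}$, or $x_1x_k^{\sigma-1}x_{k+1}^{p'-1}$.

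Finally I would convert each surviving configuration into the parametric description of $a,d,c$. The only tools needed are the determinantal identities $s_\mu p_{\mu+1}-s_{\mu+1}p_\mu=a$, $s_{\mu+1}r_\mu-s_\mu r_{\mu+1}=c$, $p_{\mu+1}r_\mu-p_\mu r_{\mu+1}=d$, the decompositions $s_i=\sigma_ik+l_i\rho_i$, and the definition $r'_i=r_i+h(\sigma_i+l_i)$; substituting the case-specific values of $\rho_\mu,\rho_{\mu+1}$ (namely $2$, $1$, or $k-1$) and setting $\sigma=\sigma_\mu$, $\sigma'=\sigma_{\mu+1}$, $p=p_\mu$, $p'=p_{\mu+1}$, $r=r_\mu$, and $\hat r$ the residual $r$-value at index $\mu+1$ produces exactly the four displayed triples, while the sign and size conditions ($s_{\mu+1}=0$, $r'_{\mu+1}=0$ or $<0$, $r'_\mu\geq h$, $p<p'$, etc.) that define $\mu$ become the stated inequalities on $\sigma,\sigma',p,p',r,\hat r,h$; the converse checks that each triple yields sequence data landing in the corresponding sub-case, so the equivalence holds both ways. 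The Frobenius number is then immediate: as $S$ is symmetric, $\widetilde{PF(S)}=\{\widetilde{\mathrm{Frob}(S)}\}$ and $F(S)=\varphi(\widetilde{\mathrm{Frob}(S)})-a=(ha+d)+e(ha+kd)+fc-a$ for the appropriate $(e,f)$, giving the four displayed formulas. The main obstacle lies entirely in this middle step: verifying that the many sub-cases of Theorem~\ref{tPSF} collapse onto precisely these four parameter families, that the emptiness conditions are jointly satisfiable in exactly these situations and nowhere else (using Lemma~\ref{widetildehorizontal} to compare $\widetilde r$ with $r'_\mu-r'_{\mu+1}$ when needed), and that the two sub-cases $\rho_{\mu+1}=0$ and $\rho_{\mu+1}=1$ of one and the same $\widetilde{PF(S)}_2$-singleton account for the split between families (2) and (3).
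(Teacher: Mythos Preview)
Your strategy is the paper's strategy: both reduce symmetry to ``one of $\widetilde{PF(S)}_1,\widetilde{PF(S)}_2$ is a singleton, the other empty'', run through the sub-cases of Theorem~\ref{tPSF}, and read off $a,d,c$ from the identities $s_\mu p_{\mu+1}-s_{\mu+1}p_\mu=a$, $s_{\mu+1}r_\mu-s_\mu r_{\mu+1}=c$, $p_{\mu+1}r_\mu-p_\mu r_{\mu+1}=d$ attached to rows $\mu,\mu+1$ of the table. The organisation differs only cosmetically (you list all singletons first and then match with emptiness, the paper pairs them directly), and the final conversion step is identical.

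There is, however, a genuine gap in your elimination step. Your rule for discarding spurious singletons is ``the monomial is not of the form $x_1x_k^{e}x_{k+1}^{f}$'', and you apply it to (2b), (4ii), (5iii), (7ii). But the singleton produced by (6ii), namely $x_1x_k^{\sigma_\mu-1}x_{k+1}^{p_{\mu+1}-p_\mu-1}$, \emph{is} of that form, and it is compatible with the emptiness condition (1a): here $\rho_\mu=1$, $r'_\mu=h$, $\rho_{\mu+1}=0$, and $s_{\mu+1}\ge k$ allows $r'_{\mu+1}=0$. None of the sign or size constraints you invoke exclude it. What actually kills this pairing is an embedding-dimension computation: writing out $a,d,c$ from the table one finds $c=h\sigma'$, $a_k=hp'$, and then $a_{k-1}=(1+\sigma-\sigma')a_k+(p'-p)c$, so $ha+(k-1)d$ is redundant and the embedding dimension drops below $k+2$, contrary to the standing hypothesis. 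This is the one place in the whole classification where the minimality of the generating set is used non-trivially, and without it you would end up with a phantom fifth family.

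A smaller omission: the $\widetilde{PF(S)}_1$-singletons you list from (1b), (1d), (1e) all live under $r'_{\mu+1}=0$, which forces $s_{\mu+1}>k$ and therefore excludes both (3) and (6iii) for $\widetilde{PF(S)}_2$; this disposes of them in one line, but you should say it rather than leave it inside the acknowledged ``middle step''.
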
 

\begin{proof} In all the cases we get a  \gbb by applying Theorem \ref{AAG-1}. The proof consists of examining all possibles cases of Theorem \ref{tPSF}. We will omit referring each time.
 \begin{enumerate}\item $s_{\mu+1 }=0$. Then $r'_{\mu+1 }\not=0$ and we have that $\widetilde{PF(S)}=\widetilde{PF(S)}_1$ with $\widetilde{\sigma }=
\sigma _{\mu }, \widetilde{\rho }=\rho _{\mu }$.  $\card (\widetilde{PF(S)})=1$ if and only if we are in case 2d) with $\rho _{\mu }=2$, and $\widetilde{PF(S)}=\{x_1 x_k^{\sigma _{\mu }} x_{k+1}^{p_{\mu +1}-1}\}$. We have $s_{\mu }=\sigma k+2$ for some $\sigma \geq 1$. We set $p\defeq p_{\mu }, p'\defeq p_{\mu+1 }, r\defeq r_{\mu }, \hat r\defeq r_{\mu+1 }$. We have the table 
$$\begin{tabular}{lll|l}
   s & p & r & r'\\
    \hline 
   $\cdots$& $\cdots$& $\cdots$&$\cdots$\\
   $\sigma k+2$ & $p$ & $r$& \\
   $0$ & $p'$ & $\hat r$ & \\
\end{tabular}$$
with the conditions $0<p<p', r+h(\sigma +1)>0, \hat r<-1$. By Lemma 2.2.4 of \cite{Mo2}  we get 
$$a=(\sigma k+2) p', d=p'r-p\hat r, c= -(\sigma k+2) \hat r .$$ Since $\gcd(a,d)=1$ we have $\gcd(p',\hat r )=1$. The Frobenius number is 
$\varphi (x_1 x_k^\sigma x_{k+1}^{p'-1})-a$.

 \item  $s_{\mu+1 }\not=0$ and  $\card (\widetilde{PF(S)}_1)= 0, \card (\widetilde{PF(S)}_2)= 1$. We have to consider the cases 1a) or 1c) and one of the cases 4ii),5iii),6ii) 7i),7ii). 
\begin{enumerate}\item 1a) or 1c) and 4ii). $\card (\widetilde{PF(S)}_2)= 1$ implies that $\widetilde{\rho }=k-1$. By 4ii) we have  
 $\rho_{\mu}=0,$  so   $\rho_{\mu+1 }=1$, case 1a) is discarded, and case 1c) is possible if and only if  $k=2$.

\item 1a) and 5iii) we have $\rho_{\mu }=1,s_{\mu+1 }=1= \rho_{\mu+1 }$ a contradiction with 1a).
\item 1a) and 6ii) we have $r'_{\mu+1 }=0, \rho_{\mu+1 }=0,\rho_{\mu }=1, r'_{\mu}=h,  $ so $s_{\mu }=\sigma _{\mu }k+1, s_{\mu+1 }=\sigma_{\mu +1}k $. We set $\sigma \defeq \sigma _{\mu }, \sigma '\defeq \sigma _{\mu+1 },p\defeq p_{\mu }, p'\defeq p_{\mu+1 } $.  But $r'_{\mu+1 }=0= r_{\mu+1 }+h\sigma' $ so  $r_{\mu+1 }=-h\sigma' $, also $r'_{\mu}=h= r_{\mu}+h(\sigma +1) $ so  $r_{\mu}=-h\sigma $. We should have the table 
$$\begin{tabular}{lll|l}
   s & p & r & r'\\
    \hline 
  $\cdots$& $\cdots$& $\cdots$&$\cdots$\\
   $\sigma k+1$ & $p$ & $-h \sigma $& \\
   $\sigma' k$ & $p'$ & $-h\sigma'$ & \\
\end{tabular}$$ 
By Lemma 2.2.4 of \cite{Mo2}  we get $$a=(\sigma k+1) p'- \sigma' k p, \ d=-h\sigma p'+ph\sigma', \ c= -h\sigma \sigma' k  +(\sigma k+1)\sigma 'h $$  for some $\sigma > \sigma',  p'>p>0$.  By simple computations we have
$c=h\sigma', a_k=h p', a_{k-1}= h[p'(1+\sigma -\sigma ')+ \sigma '(p'-p)]=(1+\sigma -\sigma ')a_k+ (p'-p)a_{k+1} $, that is the embedding dimension of $S$ is less than $k+2$, a contradiction.
\item 1a) and 7i) we have $\rho_{\mu }=2,\rho_{\mu+1 }=0 $, and $\widetilde{PF(S)}=\{x_1 x_k^{\sigma _{\mu }} x_{k+1}^{p_{\mu +1}-p_\mu -1}\}$. So we have  
$s_{\mu }=\sigma _{\mu }k+2, s_{\mu+1 }=\sigma_{\mu +1}k, \sigma_{\mu +1}\geq 2$. We set $\sigma \defeq \sigma _{\mu }, \sigma '\defeq \sigma _{\mu+1 },p\defeq p_{\mu }, p'\defeq p_{\mu+1 }, r\defeq r_{\mu }, $ since $r'_{\mu+1 }=0= r_{\mu+1 }+h\sigma' $ we have $r_{\mu+1 }=-h\sigma' $ and the table
$$\begin{tabular}{lll|l}
   s & p & r & r'\\
    \hline 
  $\cdots$& $\cdots$& $\cdots$&$\cdots$\\
   $\sigma k+2$ & $p$ & $r$& \\
   $\sigma' k$ & $p'$ & $-h\sigma'$ & \\
\end{tabular}$$
By Lemma 2.2.4 of \cite{Mo2}  we get $$a=(\sigma k+2) p'- \sigma' k p, \ d=p'r+ph\sigma', \ c= \sigma' k r +(\sigma k+2)\sigma 'h $$  for some $\sigma \geq \sigma'\geq 2, p'>p>0,r+h(\sigma +1)>0$.
The Frobenius number is $\varphi (x_1 x_k^\sigma x_{k+1}^{p'-p-1})-a$.
\item 1a) and 7ii) Since $s_{\mu+1 }<\rho_{\mu }-1$ we have $s_{\mu+1 }=\rho_{\mu+1 }=0$ by 1a), a contradiction.

\item 1c) and 5iii), 6i) or 7ii) are easily discarded. So we consider case 1c) and 7i).\\
We have $\card (\widetilde{PF(S)}_2)=\rho_{\mu }-1=1$ if and only if $\rho_{\mu }=2$. Hence $\widetilde{PF(S)}=\{x_1 x_k^{\sigma _{\mu }} x_{k+1}^{p_{\mu +1}-p_\mu -1}\}$By hypothesis $s_\mu -s_{\mu+1 }=1$  we have  
$s_{\mu }=\sigma _{\mu }k+2, s_{\mu+1 }=\sigma _{\mu }k+1 $ for some $\sigma \geq 1$. Set $p\defeq p_{\mu }, p'\defeq p_{\mu+1 }, r\defeq r_{\mu }, $ since $r'_{\mu+1 }=0= r_{\mu+1 }+h(\sigma+1) $ we have $r_{\mu+1 }=-h(\sigma+1) $ and the table: 
$$\begin{tabular}{lll|l}
   s & p & r & r'\\
    \hline 
  $\cdots$& $\cdots$& $\cdots$&$\cdots$\\
   $\sigma k+2$ & $p$ & $r$& \\
   $\sigma k+1$ & $p'$ & $-h(\sigma+1)$ & \\
\end{tabular}$$ 
 By Lemma 2.2.4 of \cite{Mo2}  we get\\
 $ a=(\sigma k+2) p'-(\sigma k+1) p, \ d=p'r+ph(\sigma +1), \ c= (\sigma k+1) r +(\sigma k+2)(\sigma +1)h $
for some $\sigma\geq 1,p'>p>0,
r>-h(\sigma+1)$.
The Frobenius number is $\varphi (x_1 x_k^\sigma x_{k+1}^{p'-p-1})-a$.
\end{enumerate}

\item 
 $s_{\mu+1 }\not=0$ and   $\card (\widetilde{PF(S)}_1)= 1, \card (\widetilde{PF(S)}_2)= 0$. We are in case 6iii), we have $\rho_{\mu }=1,r'_{\mu }=h,  0<s_{\mu+1 }<k$ 
so $r'_{\mu+1 }<0$ and  $s_\mu -s_{\mu+1 }=  (\sigma _{\mu}-1)k+(k-s_{\mu+1 }+1) $. If $s_{\mu+1 }=1$ we have $\widetilde{\rho }=0$, we are in case 2a) and  $\card (\widetilde{PF(S)}_1)= 1$ implies $k=2$. If $s_{\mu+1 }>1$ we have $1<\widetilde{\rho }=k-s_{\mu+1 }+1<k $, so we are in case 2d).   $\card (\widetilde{PF(S)}_1)= 1$ implies $\widetilde{\rho }=2$, that is $s_{\mu+1 }=k-1 $, and $\widetilde{PF(S)}=\{x_1 x_k^{\sigma _{\mu }-1} x_{k+1}^{p_{\mu +1}-1}\}$. On the other hand $r'_{\mu }=h=r_{\mu }+h( \sigma_{\mu} +1)$  so 
$r_{\mu }=-h \sigma_{\mu} $. Set $\sigma \defeq \sigma_{\mu}, p\defeq p_{\mu},p'\defeq p_{\mu+1}, \hat r\defeq r_{\mu+1}$ we have the table
 $$\begin{tabular}{lll|l}
   s & p & r & r'\\
    \hline 
   $\cdots$& $\cdots$& $\cdots$&$\cdots$\\
   $\sigma k+1$ & $p$ & $-h \sigma $& \\
   $k-1$ & $p'$ & $\hat r$ & \\
\end{tabular}$$
 with 
$\sigma\geq 1, p'>p>0, \hat r<-h$. By Lemma 2.2.4 of \cite{Mo2}  we get   $$a=(\sigma k+1)p'-(k-1)p , d=-p'h\sigma-p\hat r, c=-(k-1)h\sigma-(\sigma k+1)\hat r.$$ Note that if  $d>0$ then $\hat r<-(p'/p)h\sigma$.
The Frobenius number is $\varphi (x_1 x_k^{\sigma -1}x_{k+1}^{p'-1})-a$. 
\end{enumerate}
\end{proof} 

\section{Almost Symmetric AAG-semigroups, $d\in \fz^*$}

Almost symmetric semigroups generated by 4 elements are well known, see for example \cite{Mos}, \cite{E}, so we can assume $k\geq 3$. 
 The case $S$ almost symmetric of type 2 (pseudo symmetric) was studied in \cite{GRR}. The proofs consist of examining all possibles cases of Theorem \ref{tPSF}. We will omit referring each time. Note that by evaluation of $\varphi $ the elements in $\widetilde{PF(S)}_i$ are ordered by increasing order if $d>0$ and by decreasing order if $d<0$.
 
The following lemma will be very useful to study almost symmetric of AAG semigroups with ratio a relative integer.

\begin{lemma}\label{symm-M} $(1)$ If 
$ x_0^\alpha  N_1 -x_0^\beta  N_2 \in I$ for some monomials $N_1$,  $N_2$ where $N_2\in \widetilde{\Ap(S,a)}$ then $\alpha \leq \beta $. \\
$(2)$ Suppose $t(S)>1$. Let $N_1,N_2\in \widetilde{PF(S)}$ such that  
 $N_1 N_2- x_0 M\in I,$ where $M\in \widetilde{\Ap(S,a)}$. If  
$ N_1 N_2 -x_0^\lambda N \in I$ for some monomial $N$ and $\lambda \in \fn^*$ then $\lambda=1$.  In particular,  if $S$ is an AAG semigroup and 
 $N_1=x_i N'_1, N_2=x_j N'_2$ with $i+j\leq k$ then $h=1$.
\end{lemma}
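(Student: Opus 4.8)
The plan is to prove the two parts of Lemma~\ref{symm-M} by exploiting the combinatorial structure of $\widetilde{\Ap(S,a)}$ together with Corollary~\ref{c10}(iv), which asserts that the monomial representative of an Apéry element is the $\prec_w$-smallest among all monomials of the same $\varphi$-value. For part $(1)$, suppose $x_0^\alpha N_1 - x_0^\beta N_2 \in I$ with $N_2 \in \widetilde{\Ap(S,a)}$. Applying $\varphi$ shows $\varphi(x_0^\alpha N_1) = \varphi(x_0^\beta N_2)$, so both sides represent the same semigroup element $s$. Since $N_2 \in \widetilde{\Ap(S,a)}$, the element $\varphi(N_2) = s - \alpha a$ lies in $\Ap(S,a)$ only after shifting—more precisely, I would argue that $x_0^\beta N_2$ and $x_0^\alpha N_1$ are two monomials with the same weight, and because $N_2$ is the canonical (minimal) Apéry representative, the power of $x_0$ it carries to reach a given weight cannot be exceeded: if $\alpha > \beta$, then $x_0^{\alpha-\beta}N_1$ and $N_2$ would have the same weight, forcing $N_2 \prec_w x_0^{\alpha-\beta}N_1$ by Corollary~\ref{c10}(iv), yet the binomial relation $x_0^{\alpha-\beta}N_1 - N_2 \in I$ (after cancelling $x_0^\beta$) together with minimality of $N_2$ in its weight class yields a contradiction, since $N_2$ would then be reducible, contradicting $N_2 \notin \In(I)$. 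Thus $\alpha \le \beta$.

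For part $(2)$, the key is the characterization in Lemma~\ref{Nari-M}: since $N_1, N_2 \in \widetilde{PF(S)}$ and $N_1 N_2 - x_0 M \in I$ with $M \in \widetilde{\Ap(S,a)}$, we may take $M = \widetilde{\rm Frob(S)}$ so that $N_1, N_2$ form a symmetric pair. Suppose also $N_1 N_2 - x_0^\lambda N \in I$ for some monomial $N$ and $\lambda \in \fn^*$. Subtracting the two binomials gives $x_0 M - x_0^\lambda N \in I$. If $\lambda \ge 2$, I would factor out $x_0$ to obtain $M - x_0^{\lambda - 1} N \in I$; but $M \in \widetilde{\Ap(S,a)}$ means $M \notin \In(I)$, so $M$ cannot be the leading term of a nonzero element of $I$ that is divisible by a monomial, and comparing weights via $\varphi$ forces $M$ and $x_0^{\lambda-1}N$ to have equal weight, whence by Corollary~\ref{c10}(iv) $M \prec_w x_0^{\lambda-1}N$, making $M$ reducible—again a contradiction. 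Hence $\lambda = 1$.

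For the final assertion, suppose $S$ is an AAG-semigroup and $N_1 = x_i N_1'$, $N_2 = x_j N_2'$ with $i + j \le k$. By Lemma~\ref{determinantal} we have $x_i x_j - x_0^h x_{i+j} \in I$, so $N_1 N_2 = x_i x_j N_1' N_2'$ reduces modulo $I$ to $x_0^h x_{i+j} N_1' N_2'$, giving a relation $N_1 N_2 - x_0^h (x_{i+j}N_1'N_2') \in I$ with $\lambda = h$. Applying the already-established statement $(2)$ forces $h = 1$.

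The main obstacle I anticipate is making the reducibility argument airtight: I must verify that having two distinct monomials of equal $\varphi$-weight, one of which is $\prec_w$-minimal and lies outside $\In(I)$, genuinely produces a contradiction when the other is obtained by stripping off powers of $x_0$. The delicate point is that $I$ is generated by binomials with no monomials (as recalled in the introduction), so any binomial $M - M' \in I$ with $M \notin \In(I)$ must have $M' \prec_w M$ or else $M$ itself would be a leading term; combined with Corollary~\ref{c10}(iv), which pins down $M$ as the \emph{unique} smallest representative of its Apéry class, this rules out the case where stripping $x_0$'s leaves $M$ on the larger side. Handling the boundary case $\lambda = 1$ versus $\lambda \ge 2$ cleanly, and ensuring the weight bookkeeping under $\varphi$ is consistent throughout, is where I would spend the most care.
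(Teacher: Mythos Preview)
Your argument for part~(1) contains a genuine error in the direction of the monomial order. You claim that Corollary~\ref{c10}(iv) gives $N_2 \prec_w x_0^{\alpha-\beta}N_1$, but that corollary only compares $M\in\widetilde{\Ap(S,a)}$ with monomials $N\in [[R']]$, and $x_0^{\alpha-\beta}N_1$ is not in $[[R']]$ when $\alpha>\beta$. More seriously, the inequality is backwards: in weighted degree reverse lex with $x_0$ smallest, a monomial carrying a positive power of $x_0$ is \emph{smaller} than one of the same weighted degree that does not, so in fact $x_0^{\alpha-\beta}N_1 \prec_w N_2$. This makes $N_2$ the leading term of the binomial and hence $N_2\in\In(I)$, which \emph{is} the desired contradiction, but your write-up states the opposite inequality and then still concludes ``$N_2$ reducible'', which is internally inconsistent. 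The same confusion recurs in your part~(2).

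The paper's proof avoids all of this by working at the semigroup level: if $\alpha>\beta$ then from $\alpha a+\varphi(N_1)=\beta a+\varphi(N_2)$ one gets $\varphi(N_2)-a=(\alpha-\beta-1)a+\varphi(N_1)\in S$, contradicting $\varphi(N_2)\in\Ap(S,a)$. That is literally the definition of the Ap\'ery set, which is why the paper says (1) ``is the definition''. Part~(2) then follows immediately from~(1) by subtracting the two binomials to get $x_0^{\lambda}N-x_0 M\in I$ with $M\in\widetilde{\Ap(S,a)}$, hence $\lambda\le 1$. Your detour through Lemma~\ref{Nari-M} and the identification $M=\widetilde{\rm Frob(S)}$ is unnecessary: nothing in the statement of~(2) requires $S$ to be almost symmetric or $M$ to be the Frobenius monomial; only $M\in\widetilde{\Ap(S,a)}$ is assumed and used. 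Your treatment of the final assertion on $h=1$ via $x_ix_j-x_0^hx_{i+j}\in I$ is correct and matches the paper.
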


\begin{proof} The first claim is the definition of $\widetilde{\Ap(S,a)}$. The second claim follows from the first. The third claim follows from the first since $x_ix_j-x_0^h x_{i+j}\in I$ when $i+j\leq k$.
\end{proof}
As the description of almost symmetric AAG semigroups is quite long, we will divide in three subsections.

\begin{subsection}{Almost symmetric AAG-semigroups with $\widetilde{\rm Frob(S) }\in\widetilde{PF(S)_\varepsilon }$ with $\varepsilon \in \{1,2\},\card(\widetilde{PF(S)_\varepsilon })\geq 2 $}

\begin{theorem}\label{almostsym00}
With the above notations, suppose $d>0$ and  either  $r'_\mu\geq h$ or $ \rho_\mu =0$.
 Suppose that $S$ is almost symmetric of type $\geq 2$. If $\widetilde{{\rm Frob(S)} }\in\widetilde{PF(S)_\varepsilon }$ with $\varepsilon \in \{1,2\},\card(\widetilde{PF(S)_\varepsilon })\geq 2 $ then we have
 $$ h=1,k\geq 3 \ \ odd, a=k+2, d\in 2\fn^*, c=a+(d/2)k,   \gcd(a,d)=1.$$ 
 Moreover 
$\widetilde{PF(S) }=\{ x_1,\dots ,x_k\}\cup \{x_{k+1}\} $, $t(S)=k+1$, $\widetilde{{\rm Frob(S)} }=x_k,F(S)=kd$.
 \end{theorem}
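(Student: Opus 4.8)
The plan is to work entirely on the monomial side, using the bijection of Corollary \ref{c10}(ii) together with the monomial characterization of almost symmetry in Lemma \ref{Nari-M}. The hypothesis places $\widetilde{\rm Frob(S)}$ inside one of the two blocks $\widetilde{PF(S)}_\varepsilon$, and that block has at least two elements. Since all monomials in a fixed block $\widetilde{PF(S)}_\varepsilon$ share the same power of $x_{k+1}$ (namely $p_{\mu+1}-1$ for $\varepsilon=1$, and $p_{\mu+1}-p_\mu-1$ for $\varepsilon=2$), Theorem \ref{tPSF} tells us that each such block is, in every case, a ``horizontal strip'' of the form $\{x_i x_k^{\gamma} x_{k+1}^{e} \mid i \in \text{some interval}\}$. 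The first step is therefore to read off from Theorem \ref{tPSF}, across all the listed subcases, exactly which blocks can have cardinality $\geq 2$, and to record the shape of the two extreme monomials (smallest and largest $\varphi$-value) in that block.

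Next I would exploit Lemma \ref{symm-M}(2). Because $\widetilde{\rm Frob(S)}$ itself lies in $\widetilde{PF(S)}_\varepsilon$, the almost-symmetry relation from Lemma \ref{Nari-M} forces the product of two block elements $M_i M_{t-i}$ to equal $x_0 \widetilde{\rm Frob(S)}$ modulo $I$; in particular there are pairs $N_1 = x_i N_1', N_2 = x_j N_2'$ with small indices, and the factor $x_0$ appears to the \emph{first} power. Applying the third assertion of Lemma \ref{symm-M}(2): whenever such a pairing produces indices with $i+j\leq k$, the relation $x_i x_j - x_0^h x_{i+j} \in I$ together with the $\lambda = 1$ conclusion forces $h=1$. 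So the second step is to verify that the block being a strip of length $\geq 2$ always yields such a pairing, pinning down $h=1$ immediately. Once $h=1$ we are in the setting where $r'_\mu \geq h$ automatically, which simplifies the remaining casework in Theorem \ref{tPSF}.

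With $h=1$ in hand, the third step is the combinatorial heart: translate ``$\widetilde{\rm Frob(S)}$ is the $\varphi$-maximal element of the strip and the strip is self-paired under $M_i \mapsto M_{t-i}$'' into numerical constraints. Self-pairing of the strip forces the interval of indices $\{1,\dots,A\}$ to be symmetric about its midpoint in a way compatible with the single fixed point (the Frobenius monomial), which should force the strip to be the full range $i=1,\dots,k$ together with one extra monomial $x_{k+1}$ on the other block, and force $\gamma$ and the exponents to collapse. Matching $\varphi$-values using the defining relations in $\mathcal B, \mathcal C, \mathcal D$ (e.g. $x_1 x_k^{\sigma} - x_0^{r'}x_{k+1}^{p}\in I$) should then yield $s_\mu = k+2$ type equalities, whence $a = k+2$; combining with $\gcd(a,d)=1$ and the parity constraint that emerges from $c = a + (d/2)k$ being an integer forces $k$ odd and $d \in 2\mathbb{N}^*$. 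Finally I would read off $\widetilde{PF(S)} = \{x_1,\dots,x_k\}\cup\{x_{k+1}\}$, so $t(S) = k+1$, identify $\widetilde{\rm Frob(S)} = x_k$ as the largest-degree generator monomial, and compute $F(S) = \varphi(x_k) - a = (ha+kd) - a = kd$.

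The main obstacle I anticipate is the bookkeeping in step three: Theorem \ref{tPSF} splits into eight top-level cases with many subcases, and I must show that the single hypothesis ``$\widetilde{\rm Frob(S)}\in\widetilde{PF(S)}_\varepsilon$ with $|\widetilde{PF(S)}_\varepsilon|\geq 2$'' is so rigid that it eliminates all but the one configuration giving $a=k+2$. The delicate point is that the Frobenius monomial is the $\varphi$-\emph{largest} element of all of $\widetilde{PF(S)}$ (by definition, via Corollary \ref{c10}), so placing it inside a strip that still has a partner above or beside it sharply constrains how the two blocks $\widetilde{PF(S)}_1$ and $\widetilde{PF(S)}_2$ can interleave; verifying that no other case survives, rather than merely that this case works, is where the real care is needed. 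I expect to lean heavily on the monotonicity of the sequences $s_i, p_i, r_i$ recorded after Lemma \ref{widetildehorizontal0} to rule out the competing configurations.
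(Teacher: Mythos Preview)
Your plan is broadly aligned with the paper's approach --- work on the monomial side, use that each $\widetilde{PF(S)}_\varepsilon$ is a strip $\{x_i M : i\in[\Gamma,\Delta]\}$, and combine Lemma~\ref{Nari-M} with Lemma~\ref{symm-M} --- but step~2 as you describe it has a real gap, and step~3 is not quite how the argument actually closes.

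\textbf{The gap in step~2.} You propose to invoke the third assertion of Lemma~\ref{symm-M}(2): if $N_1=x_iN_1'$ and $N_2=x_jN_2'$ pair with $i+j\le k$, then $h=1$. The problem is that the partner $M_1$ of the element just below $\widetilde{\rm Frob(S)}$ is not known in advance; it could lie in the \emph{other} block $\widetilde{PF(S)}_{3-\varepsilon}$, and you have no control over its leading index. So you cannot check the hypothesis $i+j\le k$ directly. What the paper does instead is sharper: writing $\widetilde{\rm Frob(S)}=x_lM$ (so $x_{l-1}M\in\widetilde{PF(S)}_\varepsilon$ since the strip has $\ge 2$ elements and $d>0$), the almost-symmetry relation gives $M_1\,x_{l-1}M - x_0\,x_lM\in I$. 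Because $I$ is prime, one cancels $M$ to obtain $x_{l-1}M_1 - x_0x_l\in I$; multiplying by $x_1$ and using $x_1x_{l-1}-x_0^hx_l\in\mathcal A$ yields $x_0^hx_lM_1-x_0x_lx_1\in I$, hence $x_0^{h-1}M_1-x_1\in I$. This single manipulation simultaneously forces $h=1$ \emph{and} identifies $M_1=x_1$ (else the embedding dimension drops). This cancellation-and-multiply trick is the missing idea in your outline.

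\textbf{The difference in step~3.} You speak of ``self-pairing of the strip'', but the Nari involution acts on all of $\widetilde{PF(S)}\setminus\{\widetilde{\rm Frob(S)}\}$, mixing both blocks, so there is no a~priori symmetry of the index interval. The paper does not argue that way. Instead, having established $x_1\in\widetilde{PF(S)}$, it scans Theorem~\ref{tPSF} for the cases in which the bare monomial $x_1$ can appear: the exponent of $x_{k+1}$ must vanish, forcing $p_{\mu+1}-p_\mu=1$ and placing $x_1$ in $\widetilde{PF(S)}_2$; the exponent of $x_k$ must vanish, forcing $\sigma_\mu=1$. This immediately restricts to cases 5) or 6) with $s_\mu=k+1$, $s_{\mu+1}=k$, whence $\widetilde{PF(S)}_2=\{x_1,\dots,x_k\}$ and (by case 2b) $\widetilde{PF(S)}_1=\{x_{k+1}^{p_{\mu+1}-1}\}$. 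A final almost-symmetry check $x_{k+1}^{2(p_{\mu+1}-1)}-x_0x_k\in I$ then pins down $p_{\mu+1}=2$, $r'_{\mu+1}=-1$, and the table yields $a=k+2$, $c=a+(d/2)k$, $k$ odd. Your endpoint computations are correct, but the route there runs through ``$x_1\in\widetilde{PF(S)}$'' rather than through a symmetry of the strip.
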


 \begin{proof}
 There exists $2\leq l\leq k, $ $M$ a monomial such that
 $\widetilde{{\rm Frob(S)} }=x_lM$ and $x_ {l-1}M\in   \widetilde{PF(S)_{\varepsilon } }$.  Since $S$ is almost symmetric, there exists $M_1 \in   \widetilde{PF(S)  }$ such that $M_1 x_{l-1}M -x_0x_lM\in I$, which implies  $ x_{l-1}M_1  -x_0x_l\in I$. We multiply by $x_1$ and using the \gbb we get $ x_0^h x_{l}M_1  -x_0x_lx_1\in I$ that implies $h=1$, $M_1-x_1\in I$, if $M_1\not= x_1$ then the embedding dimension of $S$ is less than $k+2$ contrary to our hypothesis, therefore $M_1=x_1\in \widetilde{PF(S)  }$. We have to examine all the possibles cases in Theorem \ref{tPSF} such that $ x_1\in \widetilde{PF(S)}$. Since $\sigma _{\mu}\geq 1, p_{\mu +1}\geq 2$ the possible cases are  4), 5) or 6). In particular we have $\sigma _{\mu}=1.$ 
The case 4)  implies $s_\mu =k,$ a contradiction since we have $s_\mu >k$. The cases 5) and 6) imply    $\widetilde{PF(S)_2}=\{ x_1,\dots ,x_k\}$ with $s_\mu =k+1, s_{\mu +1}=k,\widetilde{\sigma }=0,  \widetilde{\rho }=1, r'_{\mu +1}<0$. The  only possible case is  2b), so we have $\widetilde{PF(S)_1}= \{x_{k+1}^{p_{\mu+1}-1}\}$, $\varepsilon =2, \widetilde{{\rm Frob(S)} }=x_k$. Since $h=1$, by the \gbb we have $ x_i x_{k-i}-x_0 x_k\in I$, $S$ is almost symmetric if and only if  
$x_{k+1}^{2(p_{\mu+1}-1)}-x_0 x_k\in I$ so that 
$x_{0}^{-r'_{\mu+1}}x_k x_{k+1}^{p_{\mu+1}-2}-x_0 x_k\in I,$ which implies   $p_{\mu +1}=2,  r'_{\mu +1}=-1$ and $p_{\mu}=1$, so $r_{\mu+1}=r'_{\mu+1}-1=-2$, we have the table 
$$\begin{tabular}{lll|l}
   s & p & r & r'\\
    \hline 
  $\cdots$& $\cdots$& $\cdots$&$\cdots$\\
   $k+1$ & 1 & $r_{\mu}$& $r'_{\mu}$\\
   $k$ & $2$ & $-2$ & $-1$ \\
\end{tabular}$$ 
Note that $r'_{\mu}=h=1$  if and only if $r_{\mu}=-1$, which implies $d=0$, so the case 6) is not possible. The case 5) is possible and we have $a=k+2, d=2r_{\mu}+2, c=k(r_{\mu}+2)+2$ with $k$ odd, $k\geq 3, r_{\mu}\geq 0$, $\gcd(a,d)=1$, that is 
$$ h=1,k\geq 3  {\rm \ odd},  a=k+2, d\in 2\fn^*, c=a+(d/2)k.$$
 Moreover we have 
$\widetilde{PF(S) }=\{ x_1,\dots ,x_k\}\cup \{x_{k+1}\} $, $t(S)=k+1$, $F(S)=kd$.
\end{proof}

\begin{theorem}\label{almostsym0}
With the above notations, suppose that $d<0$ and either  $r'_\mu\geq h$ or $ \rho_\mu =0$.
 If  $S$ is almost symmetric, and $\widetilde{{\rm Frob(S)} }\in\widetilde{PF(S)_\varepsilon }$ with $\varepsilon \in \{1,2\},\card(\widetilde{PF(S)_\varepsilon })\geq 2 $ then we have:
$$
a=(\sigma k+1)p-\sigma k, \  
d=-h\sigma (p-1)+1<0, \ 
c=h\sigma +\sigma k+1,  
$$
with $gcd(a,d)=1$, $h\geq 2,k\geq 3, \sigma \geq 1, p\geq 2$. Moreover  $t(S)=k+1$ and $ \widetilde{PF(S)_1} =\{  x_{k+1}^{p-1}\}$, $ \widetilde{PF(S)_2} =\{x_1  x_k^{\sigma -1 } x_{k+1}^{p-2},\dots ,x_k  x_k^{\sigma-1} x_{k+1}^{p-2} \}$, $\widetilde{{\rm Frob(S)}}=x_1  x_k^{\sigma -1 } x_{k+1}^{p-2}$.
 \end{theorem}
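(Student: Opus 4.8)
The plan is to mirror the proof of Theorem~\ref{almostsym00} (the case $d>0$), reversing every ordering since now $d<0$. Two structural facts drive everything: because $d<0$ we may assume $h\ge 2$ (Remark~\ref{rneg}), and $\deg x_j=ha+jd$ is \emph{strictly decreasing} in $j$, so inside each of the sets $\widetilde{PF(S)_1},\widetilde{PF(S)_2}$ of Theorem~\ref{tPSF} the value of $\varphi$ decreases as the first index grows. Hence $\widetilde{{\rm Frob(S)}}$, being the $\varphi$-largest element of $\widetilde{PF(S)}$, is the \emph{least-index} element $x_lM$ of its set $\widetilde{PF(S)_\varepsilon}$ (so $1\le l\le k-1$), and since $\card(\widetilde{PF(S)_\varepsilon})\ge 2$ the monomial $x_{l+1}M$ also lies in $\widetilde{PF(S)_\varepsilon}$. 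Applying Lemma~\ref{Nari-M} to $x_{l+1}M$ gives a partner $M_1\in\widetilde{PF(S)}$ with $M_1\,x_{l+1}M-x_0\,x_lM\in I$; cancelling $M$ (legitimate since $I$ is prime and monomial-free) leaves $M_1x_{l+1}-x_0x_l\in I$.

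Next I would distill this to a clean binomial. Multiplying by $x_{k-l}$ and using the quadrics $x_{k-l}x_{l+1}-x_1x_k$ and $x_{k-l}x_l-x_0^hx_k$ of $\mathcal A$, then cancelling $x_k$, gives $M_1x_1-x_0^{h+1}\in I$ (for $l=k-1$ one reads $M_1x_k-x_0x_{k-1}\in I$ off directly and multiplies by $x_1$, using $x_1x_{k-1}-x_0^hx_k$). Thus $\varphi(M_1)=a-d$, and by Corollary~\ref{c10}(ii) the integer $-d$ is a pseudo-Frobenius number of $S$. I would then identify $M_1=L_ix_k^\alpha x_{k+1}^\beta$: if it had a factor $x_i$ with $1\le i\le k-1$, then $x_1x_i-x_0^hx_{1+i}\in\mathcal A$ would reduce $M_1x_1-x_0^{h+1}\in I$ to a nontrivial monomial of $R'$ congruent to $x_0$, i.e. $a$ would be a nontrivial $\N$-combination of the other generators, impossible since the embedding dimension is $k+2$; so $L_i=1$. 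Discarding $\alpha\ge 1$ (which by Theorem~\ref{tPSF} could only put $M_1$ in the singleton $\widetilde{PF(S)_2}$ of case~(5)(iii)) via Lemma~\ref{symm-M} then forces $M_1=x_{k+1}^{p_{\mu+1}-1}$. Since a pure power of $x_{k+1}$ occurs among the cases (1)--(2) of Theorem~\ref{tPSF} only in case~(2)(b), we obtain $r'_{\mu+1}<0$, $\widetilde\rho=1$, $\widetilde\sigma=0$ and $\widetilde{PF(S)_1}=\{x_{k+1}^{p_{\mu+1}-1}\}$; in particular $\card(\widetilde{PF(S)_1})=1$, so $\varepsilon=2$.

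Finally I would determine $\widetilde{PF(S)_2}$ and extract the formulas. From $\widetilde\rho=1,\widetilde\sigma=0$ we get $s_\mu-s_{\mu+1}=1$; since $\widetilde{{\rm Frob(S)}}\in\widetilde{PF(S)_2}$ has least index $1$ and cardinality $\ge 2$, scanning cases (5)--(7) of Theorem~\ref{tPSF} forces $\rho_\mu=1$, $r'_\mu=h$ and $\widetilde{PF(S)_2}=\{x_ix_k^{\sigma_\mu-1}x_{k+1}^{p_{\mu+1}-p_\mu-1}:i=1,\dots,k\}$ (case~(6)(i)). Matching the $x_{k+1}$-exponent of $\widetilde{{\rm Frob(S)}}$ with $M_1=x_{k+1}^{p_{\mu+1}-1}$ gives $p_\mu=1$, while $\rho_\mu=1$, $r'_\mu=h$ give $r_\mu=-h\sigma_\mu$, $s_\mu=\sigma_\mu k+1$, $s_{\mu+1}=\sigma_\mu k$ and $r_{\mu+1}=-(h\sigma_\mu+1)$ (so $r'_{\mu+1}=-1$). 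Writing $\sigma:=\sigma_\mu$, $p:=p_{\mu+1}$ and feeding this table into Lemma~2.2.4 of \cite{Mo2} yields $a=(\sigma k+1)p-\sigma k$, $d=-h\sigma(p-1)+1$ and $c=h\sigma+\sigma k+1$, together with $\widetilde{{\rm Frob(S)}}=x_1x_k^{\sigma-1}x_{k+1}^{p-2}$, $t(S)=k+1$ and the stated descriptions of $\widetilde{PF(S)_1},\widetilde{PF(S)_2}$; the constraints $\sigma\ge 1$, $p\ge 2$, $k\ge 3$, $h\ge 2$, $\gcd(a,d)=1$ and $d<0$ are read off from the running hypotheses.

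The hard part will be this identification of $M_1$ and the collapse of the many branches of Theorem~\ref{tPSF} to exactly (2)(b) and (6)(i). Excluding the possibility that $M_1$ lies in $\widetilde{PF(S)_2}$ (the $\alpha\ge 1$, case~(5)(iii) branch) and that $\varepsilon=1$ will require combining the embedding-dimension obstruction with the $\lambda=1$ assertion of Lemma~\ref{symm-M}(2) applied to $x_{l+1}M\cdot M_1-x_0\widetilde{{\rm Frob(S)}}\in I$, while tracking carefully the powers of $x_0$ that the Gr\"obner reductions produce.
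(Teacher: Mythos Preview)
Your unified reduction to the single binomial $M_1x_1-x_0^{h+1}\in I$ (by multiplying $M_1x_{l+1}-x_0x_l\in I$ through by $x_{k-l}$ and using the quadrics of $\mathcal A$) is neat and genuinely different from the paper's proof, which instead splits immediately into the subcases $\widetilde{{\rm Frob(S)}}=x_1M$, $\widetilde{{\rm Frob(S)}}=x_{\widetilde\rho}M$ with $\widetilde\rho>1$, and $\varepsilon=1$, treating each by hand. Your route has the potential to compress parts I-A, I-B and II of the paper's argument into one line, and the elimination of factors $x_i$ with $1\le i\le k-1$ from $M_1$ via the embedding-dimension obstruction is correct.

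There are, however, two genuine gaps. First, your assertion that $M_1=x_k^\alpha x_{k+1}^\beta$ with $\alpha\ge 1$ ``could only put $M_1$ in the singleton $\widetilde{PF(S)_2}$ of case~(5)(iii)'' is wrong: the $i=k$ element of the ranges in cases (2)(c), (5)(i), (5)(ii) and (6)(i) of Theorem~\ref{tPSF} is exactly $x_k^{\widetilde\sigma}x_{k+1}^{p_{\mu+1}-1}$ or $x_k^{\sigma_\mu}x_{k+1}^{p_{\mu+1}-p_\mu-1}$, which are of this form. The paper eliminates these possibilities one by one (part~I-A, items~(1) and~(2)) by reducing $x_2M_1$ with the Gr\"obner elements of $\mathcal B,\mathcal C$ and reaching contradictions through Lemma~\ref{widetildehorizontal}; Lemma~\ref{symm-M} alone does not do this, and your sketch does not indicate how it would. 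Second, even granting $M_1=x_{k+1}^{p_{\mu+1}-1}$ and case~(2)(b), your derivation of $p_\mu=1$ by ``matching the $x_{k+1}$-exponent of $\widetilde{{\rm Frob(S)}}$ with $M_1$'' is not valid: those exponents are $p_{\mu+1}-p_\mu-1$ and $p_{\mu+1}-1$, which never agree. The paper instead combines $x_2x_{k+1}^{p_{\mu+1}-1}-x_0x_1\in I$ with the $\mathcal C$-element $x_1x_{k+1}^{p_{\mu+1}-p_\mu}-x_0^{\widetilde r}\in I$ to force $p_\mu=1$ and $\widetilde r=h+1$, and then invokes Lemma~\ref{widetildehorizontal} to exclude $\rho_\mu=0$ and $\rho_\mu>1$ (hence cases~(4)(i), (5)(i) and (7)(i)) and to pin down $r'_\mu=h$, $r'_{\mu+1}=-1$, landing in case~(6)(i). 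Your ``scanning cases~(5)--(7)'' skips case~(4) and gives no reason to prefer (6)(i) over (5)(i). Both gaps are repairable along the paper's lines, but as written the proposal does not close them.
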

 
 \begin{proof}

{\bf I)} Suppose $\card(\widetilde{PF(S)_2})\geq 2$ and $\widetilde{{\rm Frob(S)}}\in \widetilde{PF(S)_2}$. By Theorem \ref{tPSF} we have that either $\widetilde{{\rm Frob(S)}}= x_1M$ or $\widetilde{{\rm Frob(S)}}= x_{\widetilde{\rho }}M$ for some monomial $M$ and  $\widetilde{\rho }\geq 2$.\\ 
\noindent{\bf I-A) } Suppose that $\widetilde{{\rm Frob(S)}}= x_1M$, we have that $x_2M\in \widetilde{PF(S)_2}$, so there exists some monomial $M_1\colon =L_i x_k^\alpha 
x_{k+1}^\beta \in  \widetilde{PF(S)}$ such that $$ x_2M M_1 -x_0x_1M\in I, x_2 M_1 -x_0x_1\in I.$$ If $0<i<k-1$ then we have  a contradiction by Lemma \ref{symm-M}. If $i=k-1$ 
then $x_{2}x_{k-1}-x_1x_k\in I$, it leads to 
$ x_k x_k^\alpha  x_{k+1}^\beta-x_0\in I, $ contradicting   that the embedding dimension of $S$ is $k+2$. So $M_1\colon = x_k^\alpha 
x_{k+1}^\beta.$ Suppose $\alpha >0$. We examine two cases:
\begin{enumerate}
\item  $M_1\in \widetilde{PF(S)_1}$. We are in case 2c) of Theorem \ref{tPSF}. So ${\widetilde{\rho }}=1$,  $M_1=x_k^{\widetilde{\sigma }}x_{k+1}^{p _{\mu +1}-1}$ with $\widetilde{\sigma }>0, r'_{\mu +1}<0$. We have $x_1x_k^{\widetilde{\sigma }}x_{k+1}^{p _{\mu +1}-p _{\mu}}-
x_0^{\widetilde{r}}\in I,$ so $x_2x_k^{\widetilde{\sigma }}x_{k+1}^{p _{\mu +1}-p _{\mu}}-
x_0^{\widetilde{r}-h}x_1\in I$, we also have $x_2 M_1-x_0x_1\in I$.
Hence 
$$ x_0^{\widetilde{r}-h} x_1 x_{k+1}^{p _{\mu} -1}-x_0 x_1\in I,$$
which implies $p _{\mu}=1, \widetilde{r}=h+1$. By applying  Lemma \ref{widetildehorizontal} we get 
\begin{enumerate}
\item  
If $\rho_\mu =0 $  then $\widetilde{r}=r'_\mu -r'_{\mu +1}+h=h+1$ which is not possible since $r'_\mu\geq 1, r'_{\mu +1}<0$.
\item  
If $\rho_\mu =1$ then  $\widetilde{r}=r'_\mu-r'_{\mu +1}=h+1$. It is possible only if $r'_\mu=h$, $r'_{\mu +1}=-1$, which implies that we are in case 6i). Hence $s_{\mu }-s_{\mu +1}=1,$  contradiction to 
 $\widetilde{\sigma }>0$. 
\item  
If $\rho_\mu >1$ then we have $\rho_\mu> \rho _{\mu +1}$ and  $\widetilde{r}=r'_\mu-r'_{\mu +1}+h=h+1$, which is not possible since $r'_\mu\geq h, r'_{\mu +1}<0$.
\end{enumerate}

\item $M_1\in \widetilde{PF(S)_2}$. We are  in cases  5i), 5ii) or 6i). We have $M_1=x_k^{\sigma _\mu }x_{k+1}^{p _{\mu +1}-p _\mu -1} $  so  
$x_2x_k^{\sigma _\mu }x_{k+1}^{p _{\mu +1}-p _\mu -1} -x_0x_1\in I,$ but by the \gbb of $I$ we have 
$x_2x_k^{\sigma _\mu }- x_0^{r'_\mu -h} x_1 x_{k+1}^{p _\mu }\in I$, hence $x_0^{r'_\mu -h} x_1 x_{k+1}^{p _{\mu +1}-1 }-x_0x_1\in I,$ that is 
$x_0^{r'_\mu -h}  x_{k+1}^{p _{\mu +1}-1 }-x_0\in I$, where $p _{\mu +1}>1$, which  implies that the embedding dimension of $S$ is less than $k+2$, a contradiction. 
\end{enumerate} 

To resume we have proved that $M_1=x_{k+1}^\beta $. The only possible case is 2b) of  Theorem \ref{tPSF}, we have  $M_1=x_{k+1}^{p _{\mu +1}-1}, s_{\mu }-s_{\mu +1}=1$ and 
\begin{equation}\label{neg-case1}
	 x_2 x_{k+1}^{p _{\mu +1}-1}-x_0x_1\in I, x_1 x_{k+1}^{p _{\mu +1}-p_\mu }-x_0^{\widetilde{r}}\in I.
  \end{equation} 
So that 
$$ x_0^{\widetilde{r}-h} x_1 x_{k+1}^{p _{\mu} -1}-x_0x_1 \in I,$$
which implies  $p _{\mu}=1, \widetilde{r}=h+1$. Now we examine all cases:

 If $\rho_{\mu }=0$ or $\rho_{\mu }>1$  then $\widetilde{r }=r'_{\mu }-r'_{\mu +1}+h=h+1$, which is impossible. 

If $\rho_{\mu }=1$ then $\widetilde{r }=r'_{\mu }-r'_{\mu +1}=h+1$, which implies $r'_{\mu}=h, r'_{\mu +1}=-1$. We are in the case 6i)
$$ \widetilde{PF(S)}_2=\{x_i x_k ^{\sigma_{\mu }-1} x_{k+1}^{p_{\mu+1 }-p_{\mu} -1}, i=1,\dots ,k\}.$$ Note that $x_3 x_k^{\sigma _{\mu}}-x_2 x_{k+1}\in I$  implies 
$$ (x_{3+i}  x_k^{\sigma _\mu-1 }x_{k+1}^{p _{\mu+1 }-2})(x_{k-i}  x_k^{\sigma _\mu-1 }x_{k+1}^{p _{\mu+1 }-2})
-x_0 x_1 x_k^{\sigma _\mu-1 }x_{k+1}^{p _{\mu+1 }-2}\in I$$ for $i=0,\ldots, k-3$, which verifies that $S$ is almost symmetric. We set $p=p _{\mu+1 }, \sigma=\sigma _{\mu}$. The only possible case is given by the table 
$$\begin{tabular}{lll|l}
   s & p & r & r'\\
    \hline 
   $a$& $0$& $d$& \\
   $\sigma k+1$ & $1$ & $-h\sigma $&$h$\\
   $\sigma k$ & $p$ & $-h\sigma-1$ & $-1$\\
\end{tabular}$$
  By Lemma 2.2.4 of \cite{Mo2}  we get
  $$   a=(\sigma k+1)p-\sigma k;   
d=(p-1)(-h\sigma)+1;   
  c=h\sigma +\sigma k+1 $$ 
with  $\gcd(a,d)=1, h\geq 2,p\geq 2, \sigma \geq 1$.\\
\noindent{\bf I-B) } Suppose that $\widetilde{{\rm Frob(S)}}= x_{\widetilde{\rho }}M$ with $\widetilde{\rho }>1$, we have that $x_{\widetilde{\rho }+1}M\in \widetilde{PF(S)_2}$, so there exists some monomial $M_1 \in  \widetilde{PF(S)}$ such that $$ x_{\widetilde{\rho }+1}M M_1-x_0x_{\widetilde{\rho }}M\in I.$$ 
The possible cases  in Theorem \ref{tPSF} are 4ii), 5ii) or 7ii).
\begin{enumerate}
\item   
If $\rho _\mu =0$, we are in case 4ii). First suppose that $M_1\in \widetilde{PF(S)_2}$. It implies $M_1=x_ix_{k}^{\sigma _{\mu }-1}
x_{k+1}^{p _{\mu+1 }-p_\mu -1}$ for some $i=\widetilde{\rho }+1,\ldots,k-1$
 so that $x_{\widetilde{\rho }+1}x_{i} x_{k}^{\sigma _{\mu }-1}x_{k+1}^{p _{\mu+1 }-p_\mu -1}-x_0x_{\widetilde{\rho }}\in I$. If
 $\widetilde{\rho }+1+i\leq k $ then we have a contradiction.  If
 $\widetilde{\rho }+1+i> k $ then  we can use   $x_{k}^{\sigma _{\mu }}-x_{0}^{r'_{\mu }}x_{k+1}^{p_{\mu }}\in I$. So  we have 
 $x_{0}^{r'_{\mu }}x_{\widetilde{\rho }+1+i-k} x_{k+1}^{p _{\mu+1 }-1}-x_0x_{\widetilde{\rho }}\in I$, but $r'_{\mu }\geq 1$, contrary to our hypothesis that the embedding dimension of $S$ is $k+2$.\\
 Now suppose that $M_1\in \widetilde{PF(S)_1}$. Since we are in case 4ii) we have $s _{\mu +1}<k-1$ which implies $r'_{\mu +1}<0$ and $\widetilde{\sigma}=\sigma _{\mu }-1, \widetilde{\rho }=k-s_{\mu +1}\geq 2.$   
Hence the only possible case for $M_1$ is 2d),   $M_1=x_{i} x_{k}^{\sigma _{\mu }-1}x_{k+1}^{p _{\mu+1 }-1}$ for some $1\leq i\leq \widetilde{\rho }-1$, but $\widetilde{{\rm Frob(S)}}= x_{\widetilde{\rho }} x_{k}^{\sigma _{\mu }-1}x_{k+1}^{p _{\mu+1 }-p_\mu -1}$, hence  $\varphi ( M_1)>\varphi ( \widetilde{{\rm Frob(S)}})$ a contradiction.

\item  If $\rho _\mu =1$, we are in the case 5ii).  
First suppose that $M_1\in \widetilde{PF(S)_2}$. We have  $M_1=x_{k}^{\sigma _{\mu }}
x_{k+1}^{p _{\mu+1 }-p_\mu -1}$  and  $ x_{\widetilde{\rho }+1} M_1- x_0 x_{\widetilde{\rho }}\in I,$   but 
$x_{\widetilde{\rho }+1} x_{k}^{\sigma _{\mu }}-  x_0^{r'_{\mu }-h}x_{\widetilde{\rho }}x_{k+1}^{p_\mu}\in I$ which implies 
$ x_0^{r'_{\mu }-h}x_{\widetilde{\rho }}x_{k+1}^{p _{\mu+1 } -1}-x_0x_{\widetilde{\rho }}\in I$. Let recall that   $ p_{\mu +1}\geq 2$, which leads to a contradiction. 

Now suppose that $M_1\in \widetilde{PF(S)_1}$. Since we are in the case 5ii), we have $1<s _{\mu +1}<k$ which implies $r'_{\mu +1}<0$ and $\widetilde{\sigma}=\sigma _{\mu }-1, \widetilde{\rho }=k+1-s_{\mu +1}>1$.  Hence the only possible case for $M_1$ is 2d),  $M_1=x_{i} x_{k}^{\sigma _{\mu }-1}x_{k+1}^{p _{\mu+1 }-1}$ for some $1\leq i\leq \widetilde{\rho }-1$, but $\widetilde{{\rm Frob(S)}}= x_{\widetilde{\rho }} x_{k}^{\sigma _{\mu }-1}x_{k+1}^{p _{\mu+1 }-p_\mu -1}$. Hence  $\varphi ( M_1)>\varphi ( \widetilde{{\rm Frob(S)}})$ a contradiction. 

\item  If $\rho _\mu >1$, we are in case 7ii).  
First suppose that $M_1\in \widetilde{PF(S)_2}$. It implies $M_1=x_{\rho _\mu -1}x_{k}^{\sigma _{\mu }}
x_{k+1}^{p _{\mu+1 }-p_\mu -1}
$ so that $x_{\widetilde{\rho }+1}x_{\rho _\mu -1} x_{k}^{\sigma _{\mu }}
x_{k+1}^{p _{\mu+1 }-p_\mu -1}-x_0x_{\widetilde{\rho }}\in I$. If  $\widetilde{\rho }+ \rho _\mu\leq k $ we have $x_{\widetilde{\rho }+1}x_{\rho _\mu -1} -x_0^h x_{\widetilde{\rho }+\rho _\mu}\in I,$ which leads to a contradiction since $h\geq 2$. If  $\widetilde{\rho }+ \rho _\mu> k$ then we have $x_{\widetilde{\rho }+1}x_{\rho _\mu -1} - x_{\widetilde{\rho }+ \rho _\mu-k}x_k\in I$ and 
$x_{k}^{\sigma _{\mu }+1}- x_0^{r'_{\mu }-h}x_{k-\rho _\mu} x_{k+1}^{p_\mu}\in I$. So 
$x_0^{r'_{\mu }-h}x_{\widetilde{\rho }+ \rho _\mu-k} x_{k-\rho _\mu} x_{k+1}^{p _{\mu+1 }-1}-  x_0x_{\widetilde{\rho }}\in I$. But  
$x_{\widetilde{\rho }+ \rho _\mu-k} x_{k-\rho _\mu} - x_0^hx_{\widetilde{\rho }} \in I$
 implies that 
$ x_0^{r'_{\mu }}x_{\widetilde{\rho }}x_{k+1}^{p _{\mu+1 } -1}-x_0x_{\widetilde{\rho }}\in I$ with $r'_{\mu }\geq h\geq 2$ which leads to a contradiction.

Now suppose that $M_1\in \widetilde{PF(S)_1}$, since we are in then case 7ii) we have $1<s _{\mu +1}< \rho _\mu-1$ which implies $r'_{\mu +1}<0$, and 
$\widetilde{\sigma}=\sigma _{\mu }, \widetilde{\rho }=\rho _\mu-s _{\mu +1}>1$. Hence the only possible case for $M_1$ is 2d),  $M_1=x_{i} x_{k}^{\sigma _{\mu }}x_{k+1}^{p _{\mu+1 }-1}$ for some $1\leq i\leq \widetilde{\rho }-1$, but  $\widetilde{{\rm Frob(S)}}= x_{\widetilde{\rho }} x_{k}^{\sigma _{\mu }}x_{k+1}^{p _{\mu+1 }-p_\mu -1}$. Hence  $\varphi ( M_1)>\varphi ( \widetilde{{\rm Frob(S)}})$ a contradiction.
\end{enumerate}

{\bf II)} Suppose $\card(\widetilde{PF(S)_1})\geq 2,  \widetilde{{\rm Frob(S)}}\in \widetilde{PF(S)_1}$.

By Theorem \ref{tPSF} we have    $\widetilde{{\rm Frob(S)}}= x_1M$ for some monomial $M=x_k^u x_{k+1}^{p _{\mu+1 }-1}$, where $u=\widetilde{\sigma} $ in case 1e) or 2d) and $u=\widetilde{\sigma}-1 $ in  cases 1b), 1d), 2a), or 2b). We have that $x_2M\in \widetilde{PF(S)_1}$, so there exists some monomial $M_1\colon =L_i x_k^\alpha 
x_{k+1}^\beta \in  \widetilde{PF(S)}$ such that $$ x_2M L_i x_k^\alpha 
x_{k+1}^\beta-x_0x_1M\in I.$$  By the same argument as the beginning of part I, we have $M_1\colon = x_k^\alpha 
x_{k+1}^\beta.$ We examine two cases:
 
\begin{enumerate}
\item 
 If $s_{\mu +1}=0$ then we have $\widetilde{PF(S)_2}=\emptyset $ and $r'_{\mu +1}<0$, so $M_1\in \widetilde{PF(S)_1}$. We have to check the cases 2a) 2c) and 2d).
\begin{enumerate}
\item  In the   case 2a) we have $\rho _{\mu }=0, \widetilde{\sigma}=\sigma _{\mu }$, $M_1=x_{k-1}x_k^{\sigma _{\mu }-1} x_{k+1}^{p _{\mu+1 }-1}$, and $S$ is almost symmetric which implies  
$x_2 x_{k-1}M-x_0x_1\in I.$ But  $x_2 x_{k-1}-x_1 x_k\in I $,    hence the condition $ x_k^{\sigma _{\mu }} x_{k+1}^{p _{\mu+1 }-1}-x_0\in I$  implies that the embedding dimension of $S$ is less than $k+2$.

 \item In the   case 2c) we  have $\rho _{\mu }=1,  \widetilde{\sigma}=\sigma _{\mu }$, and $M_1=x_k^{\sigma _{\mu }} x_{k+1}^{p _{\mu+1 }-1}$, $S$ almost symmetric implies  
$x_2 x_k^{\sigma _{\mu }} x_{k+1}^{p _{\mu+1 }-1}-x_0x_1\in I$. But 
 $x_2 x_k^{\sigma _{\mu }}- x_0^{r'_{\mu }-h} x_1   x_{k+1}^{p _{\mu }}\in I$, so  we have 
 $$x_1 x_0^{r'_{\mu }-h}   x_{k+1}^{p _{\mu+1 }+p _{\mu }-1}-x_0x_1\in I, $$ 
 with $p _{\mu+1 }+p _{\mu }-1\geq 2$ which implies that the embedding dimension of $S$ is less than $k+2$.
 
 \item  In the   case 2d) we have $\rho _{\mu }= \widetilde{\rho }>1,  \widetilde{\sigma}=\sigma _{\mu }$, 
$M_1=x_{\rho _{\mu }-1} x_k^{\sigma _{\mu }} x_{k+1}^{p _{\mu+1 }-1}$. Since $1<\rho _{\mu }<k$ we have $x_2 x_{\rho _{\mu }-1} -x_0^h   x_{\rho _{\mu }+1} \in I$. Hence  $x_0^h   x_{\rho _{\mu }+1}x_k^{\sigma _{\mu }} x_{k+1}^{p _{\mu+1 }-1}-x_0x_1\in I $ which leads to a contradiction since $h\geq 2$.
 \end{enumerate}
 
 \item If $s_{\mu +1}\not=0$ then we have two cases, either $M_1\in \widetilde{PF(S)_1} $  or $M_1\in \widetilde{PF(S)_2}  $.\\ 
 $\bullet$ If $M_1=x_k^\alpha  x_{k+1}^{p _{\mu+1 }-1}\in \widetilde{PF(S)_1} $ then the only possible case is 2c). So $r'_{\mu +1}<0,\widetilde{\rho }=1, 
\alpha =\widetilde{\sigma  }>0$. Since $S$ is almost symmetric we have $ x_2 x_k^{\widetilde{\sigma  }} x_{k+1}^{p _{\mu+1 }-1}-x_0 x_1\in I$. By using the \gbb of $I$ we have 
$x_2 x_k^{\widetilde{\sigma  }} x_{k+1}^{p _{\mu+1 }- p_\mu }-x_0^{\widetilde{r}-h}x_1 \in I.$ Hence   
$x_0^{\widetilde{r}-h}x_1 x_{k+1}^{p _{\mu }- 1}- x_0 x_1\in I$ which is possible if and only if $\widetilde{r}-h=1, p _{\mu }= 1$.
 By applying  Lemma \ref{widetildehorizontal} we get
  
\begin{enumerate}
\item  
If $\rho_\mu =0$ then $\widetilde{r}=r'_\mu -r'_{\mu +1}+h=h+1$ which is not possible since $r'_\mu\geq 1, r'_{\mu +1}<0$.

\item  
If $\rho_\mu =1$ then  $\widetilde{r}=r'_\mu-r'_{\mu +1}=h+1$ which is possible only if $r'_\mu=h, r'_{\mu +1}=-1$, which implies that we are in the case 6).

 In case 6i) we have  $s_{\mu }-s_{\mu +1}=1$, a contradiction with 
 $\widetilde{\sigma }>0$.
 
 In case 6ii), since $S$ is almost symmetric there exists $M_2\in \widetilde{PF(S)} $ such that
 $x_1 x_k^{ \sigma_\mu -1} x_{k+1}^{p _{\mu+1 }-p_\mu -1} M_2 -x_0 \widetilde{{\rm Frob(S)}}\in I $.
If  $M_2\in \widetilde{PF(S)}_1 $ then $M_2=x_j M$ for some $j=3,\ldots,k-1$, recall that $\widetilde{{\rm Frob(S)}}=x_1M$, so we have 
$x_j x_k^{ \sigma_\mu -1} x_{k+1}^{p _{\mu+1 }-p_\mu -1}  -x_0 \in I $  for some $j=3,\ldots,k-1$, which is impossible.\\ 
If  $M_2\in \widetilde{PF(S)}_2 $ then we have 
 $(x_1 x_k^{ \sigma_\mu -1} x_{k+1}^{p _{\mu+1 }-p_\mu -1} )^2 -x_0 \widetilde{{\rm Frob(S)}}\in I $ but $x_1^2-x_0^h x_2\in I$ so we get a contradiction.\\ 
 Finally in the case 6iii) we recall  that $\rho _{\mu }=\widetilde{\rho }=1$ so that $\rho _{\mu +1}=0$, which implies $s_{\mu +1}=0$ and 
 $x_{k+1}^{p _{\mu+1 }}-x_0\in I$, which is not possible.
\item  
If $\rho_\mu >1$ then we have $\rho_\mu> \rho _{\mu +1}$ and  $\widetilde{r}=r'_\mu-r'_{\mu +1}+h=h+1$, which is not possible since $r'_\mu\geq h, r'_{\mu +1}<0$.
\end{enumerate}
 
$\bullet$ If $M_1\in \widetilde{PF(S)_2} $ then the possible cases are 5i), 5ii), 5iii) or 6i), so $\rho_\mu =1$, 
$M_1=x_k^{ \sigma_\mu} x_{k+1}^{p _{\mu+1 }-p_\mu -1} $. Hence 
$$x_2 x_k^{ \sigma_\mu} x_{k+1}^{p _{\mu+1 }-p_\mu -1}-x_0x_1\in I, $$
but $x_2 x_k^{ \sigma_\mu}-x_0^{r'_\mu -h} x_1 x_{k+1}^{p_\mu}\in I$. It follows that
 $$x_0^{r'_\mu -h}  x_{k+1}^{p _{\mu+1 }-1}-x_0\in I, $$ which is not possible since $p _{\mu+1 }>1$.
 \end{enumerate}
\end{proof}

As a consequence we are reduced to study Almost symmetric AAG semigroups with $\widetilde{PF(S)_i}=\{\widetilde{{\rm Frob(S)}}\} $, for $i=1$ or $i=2$.
\end{subsection}
\begin{subsection}{Almost symmetric AAG-semigroups with $\widetilde{PF(S)_2}=\{\widetilde{\rm Frob(S)}\} $}
	
\begin{theorem}\label{almostsym1} Suppose $\widetilde{PF(S)_2}=\{\widetilde{\rm {Frob(S)}}\} $, $k\geq 3, t(S)\geq 2$ and either  $r'_\mu\geq h$ or $ \rho_\mu =0$. Then $S$ is almost symmetric  if and only if

\item (i)  Either 
$$a=(\sigma k+2)p-((\sigma-1)k+l), \ 
d=pr+\sigma,   c=((\sigma-1)k+l)r+(\sigma k+2)\sigma, 
$$ 
with $h=1,k\geq 3, 1\leq l\leq k-1$, $p,\sigma \geq 2, r<-(\sigma +1)$. Moreover $t(S)=k-l+1 $ and
$ \widetilde{PF(S)_1} =\{x_i  x_{k+1}^{p-1}\mid i=1,\dots ,k-l\}$, $ \widetilde{PF(S)_2} =\{x_1 x_k^{\sigma}x_{k+1}^{p-2}\}$, 
	
\item (ii) or $$a=(\sigma k+2)p-(\sigma k+1), \
d=pr+h(\sigma+1)+1,   c=(\sigma k+1)r+(\sigma k+2)(h(\sigma+1)+1), $$ 
with  $h\geq 1, \sigma\geq 1,p\geq 2, r>- h(\sigma+1)-1.$ Moreover $t(S)=2 $ and 
$ \widetilde{PF(S)_1} =\{x_{k+1}^{p-1}\}$, $ \widetilde{PF(S)_2} =\{x_1 x_k^{\sigma}x_{k+1}^{p-2}\}.$ 
\end{theorem}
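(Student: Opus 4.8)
The plan is to combine the explicit description of the pseudo-Frobenius monomials in Theorem~\ref{tPSF} with the monomial almost-symmetry criterion of Lemma~\ref{Nari-M}, running the argument in both directions exactly in the spirit of Theorems~\ref{almostsym00} and~\ref{almostsym0}. Since $t(S)\geq 2$ and $\widetilde{PF(S)_2}=\{\widetilde{{\rm Frob(S)}}\}$, Lemma~\ref{lPSF} gives $\widetilde{PF(S)}=\widetilde{PF(S)_1}\cup\{\widetilde{{\rm Frob(S)}}\}$ with $\card(\widetilde{PF(S)_1})=t(S)-1\geq 1$; every non-Frobenius pseudo-Frobenius monomial then lies in $\widetilde{PF(S)_1}$, carrying $x_{k+1}$ to the power $p_{\mu+1}-1$, while $\widetilde{{\rm Frob(S)}}$ carries it to $p_{\mu+1}-p_\mu-1$. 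First I would isolate, among cases (3)--(7) of Theorem~\ref{tPSF}, those giving $\card(\widetilde{PF(S)_2})=1$: namely 5(iii), 6(ii), 7(i) with $\rho_\mu=2$, 4(ii) with $s_{\mu+1}=1$, and 7(ii) with $s_{\mu+1}=\rho_\mu-1$; in each I read off $\widetilde{{\rm Frob(S)}}$ explicitly.

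For the forward implication, assume $S$ almost symmetric. As $\widetilde{{\rm Frob(S)}}$ is the $\varphi$-largest element and the other $t(S)-1$ monomials all sit in $\widetilde{PF(S)_1}$, Lemma~\ref{Nari-M} forces the pairings $M_iM_{t(S)-i}-x_0\widetilde{{\rm Frob(S)}}\in I$ to take place inside $\widetilde{PF(S)_1}$. A product of two monomials of $\widetilde{PF(S)_1}$ carries $x_{k+1}$ to the power $2(p_{\mu+1}-1)$, which must reduce, modulo the Gr\"obner basis $\mathcal A\cup\mathcal B\cup\mathcal C\cup\mathcal D$ of Theorem~\ref{AAG-1}, to the $x_{k+1}$-degree $p_{\mu+1}-p_\mu$ of $x_0\widetilde{{\rm Frob(S)}}$; only the relations of $\mathcal C$ and $\mathcal D$ carrying $x_{k+1}$ can lower that degree. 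Matching the resulting $x_k$- and $x_i$-content of the normal form against $x_0\widetilde{{\rm Frob(S)}}$ then eliminates the cases 5(iii), 6(ii), 4(ii) and 7(ii): each produces a binomial forcing one generator $t^{a_i}$ to be redundant, contradicting the standing embedding dimension $k+2$ (using Lemma~\ref{symm-M}). This leaves case 7(i) with $\rho_\mu=2$, i.e. $\widetilde{{\rm Frob(S)}}=x_1x_k^{\sigma_\mu}x_{k+1}^{p_{\mu+1}-p_\mu-1}$.

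With $\rho_\mu=2$ fixed, the shape of $\widetilde{PF(S)_1}$ is governed by the sign of $r'_{\mu+1}$ via cases (1)--(2) of Theorem~\ref{tPSF}, and exactly two possibilities survive. If $r'_{\mu+1}=0$ (cases 1(b),(d),(e)) then $\widetilde{PF(S)_1}=\{x_ix_{k+1}^{p_{\mu+1}-1}\mid 1\leq i\leq k-\rho_{\mu+1}\}$; here the pairing multiplies $x_ax_{k+1}^{p_{\mu+1}-1}$ and $x_bx_{k+1}^{p_{\mu+1}-1}$ with $a+b=k-\rho_{\mu+1}+1\leq k$, so Lemma~\ref{symm-M}(2) forces $h=1$, and a cancellation in the domain $K[S]$ forces $p_\mu=1$; this is case (i), with $\rho_{\mu+1}=l$ and $t(S)=k-l+1$. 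If instead $r'_{\mu+1}<0$ with $\widetilde\rho=1,\widetilde\sigma=0$ (case 2(b)) then $\widetilde{PF(S)_1}=\{x_{k+1}^{p_{\mu+1}-1}\}$ and $t(S)=2$; the single relation $(x_{k+1}^{p_{\mu+1}-1})^2-x_0\widetilde{{\rm Frob(S)}}\in I$ combined with the $\mathcal D$-relation $x_{k+1}^{p_{\mu+1}}-x_0^{-r'_{\mu+1}}x_1x_k^{\sigma_{\mu+1}}\in I$ yields, after cancelling $x_1x_k^{\sigma_{\mu+1}}$ in $K[S]$, the binomial $x_0^{-r'_{\mu+1}}x_{k+1}^{p_\mu-1}-x_0\in I$; since no nontrivial power of $x_{k+1}$ is proportional to $1$ modulo the prime ideal $I$, this forces $p_\mu=1$ and $r'_{\mu+1}=-1$, giving case (ii) with $h\geq 1$. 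In both cases I would record the table of sequences $(s,p,r,r')$ with rows $s_\mu=\sigma k+2$ and $s_{\mu+1}$, substitute $r_{\mu+1}$ from the value of $r'_{\mu+1}$, and apply Lemma 2.2.4 of \cite{Mo2} to the identities $a=s_\mu p_{\mu+1}-s_{\mu+1}p_\mu$, $c=s_{\mu+1}r_\mu-s_\mu r_{\mu+1}$, $d=p_{\mu+1}r_\mu-p_\mu r_{\mu+1}$ to obtain the stated closed forms for $a,d,c$.

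For the converse, given the data of (i) or (ii), I would reconstruct the sequence table, verify through Theorem~\ref{AAG-1} that the Gr\"obner basis and the sets $\widetilde{PF(S)_1},\widetilde{PF(S)_2}$ are precisely as claimed, and then check the pairing condition of Lemma~\ref{Nari-M} directly against the binomials of $\mathcal A,\mathcal B,\mathcal C,\mathcal D$. I expect the main obstacle to be the forward bookkeeping: systematically discarding the four non-target singleton cases and carrying the reductions modulo the Gr\"obner basis far enough to see that the almost-symmetry relation forces $p_\mu=1$ together with the precise value of $r'_{\mu+1}$. The clean separation of (i) from (ii) rests on the dichotomy in Lemma~\ref{symm-M}(2), namely that a pairing among two of $x_1,\dots,x_k$ with index sum $\leq k$ compels $h=1$, whereas a pairing routed through $x_{k+1}$ leaves $h\geq 1$ free.
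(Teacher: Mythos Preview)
Your plan is sound and reaches the same destination as the paper, but the route is organized differently. The paper does \emph{not} first enumerate the singleton $\widetilde{PF(S)_2}$ cases and eliminate 5(iii), 6(ii), 4(ii), 7(ii) one at a time. Instead it writes $\widetilde{PF(S)_2}=\{L_\gamma x_k^{\sigma_\mu-\delta}x_{k+1}^{p_{\mu+1}-p_\mu-1}\}$ and $\widetilde{PF(S)_1}=\{L_\Gamma x_k^{\widetilde\sigma-\varepsilon}x_{k+1}^{p_{\mu+1}-1},\dots,L_\Delta x_k^{\widetilde\sigma-\varepsilon}x_{k+1}^{p_{\mu+1}-1}\}$ abstractly, applies the $\mathcal D$-relation $x_{k+1}^{p_{\mu+1}}-x_0^{-r'_{\mu+1}}L_{\rho_{\mu+1}}x_k^{\sigma_{\mu+1}}\in I$ once to the extremal pairing $L_\Gamma L_\Delta M^2-x_0 L_\gamma N\in I$, and then invokes Lemma~\ref{symm-M}(1) to conclude immediately that $r'_{\mu+1}\in\{0,-1\}$. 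This single observation replaces most of your preliminary elimination: the remainder of the paper's proof is then a case split on $r'_{\mu+1}=0$ (subcases 1b,1d,1e of Theorem~\ref{tPSF} for $\widetilde{PF(S)_1}$) versus $r'_{\mu+1}=-1$ (subcases 2a,2b,2c,2d), arriving at $\rho_\mu=2$, $p_\mu=1$ and case~7(i) only at the end of each branch. Your approach front-loads the constraint $\rho_\mu=2$ and back-loads the analysis of $r'_{\mu+1}$; the paper does the reverse. Both work, but the paper's early bound $r'_{\mu+1}\geq -1$ is the economical step you are missing and would shorten your ``forward bookkeeping'' considerably.

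One small slip: in your enumeration, the singleton condition in case~7(ii) is $s_{\mu+1}=1$ (so that $\card(\widetilde{PF(S)_2})=\rho_\mu-\widetilde\rho=s_{\mu+1}=1$), not $s_{\mu+1}=\rho_\mu-1$, which is the boundary of case~7(i).
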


\begin{proof} In order to prove this theorem, we often refer to the  cases of Theorem \ref {tPSF}. At first we  have by Theorem \ref {tPSF} that $ \widetilde{PF(S)_2} =\{L_\gamma  x_k^{{\sigma_\mu  }-\delta } x_{k+1}^{p_{\mu +1}-p_{\mu}-1}\}$ where $\gamma , \delta \in \{0,1\}$,  and  
$ \widetilde{PF(S)_1} =\{L_\Gamma  x_k^{\widetilde{\sigma }-\varepsilon } x_{k+1}^{p_{\mu +1}-1},\dots ,L_\Delta  x_k^{\widetilde{\sigma }-\varepsilon } x_{k+1}^{p_{\mu +1}-1}\}$ for some $\Gamma \leq \Delta $, $\varepsilon  \in \{0,1\}$. We set    $M_{\widetilde{\sigma }-\varepsilon}\defeq x_k^{\widetilde{\sigma }-\varepsilon } x_{k+1}^{p_{\mu +1}-1}$, $N_{\sigma_\mu -\delta }\defeq  x_k^{{\sigma_\mu  }-\delta } x_{k+1}^{p_{\mu +1}-p_{\mu}-1}$.   Since $S$ is almost symmetric we have 
\begin{equation}\label{caseI0} L_\Gamma L_\Delta M_{\widetilde{\sigma }-\varepsilon}^2-x_0 L_\gamma  N_{\sigma_\mu -\delta }\in I .\end{equation}
 
Note that $2p_{\mu +1}-2-(p_{\mu +1}-p_{\mu}-1)=p_{\mu +1}+p_{\mu}-1\geq 0  $  and 
$x_{k+1}^{p_{\mu +1}}-x_0 ^{-r'_{\mu +1}}L_{\rho _{\mu +1}}x_{k}^{\sigma _{\mu +1}}\in I$  so that (\ref{caseI0}) becomes 
\begin{equation}\label{caseIbis1}  x_0 ^{-r'_{\mu +1}}L_\Gamma L_\Delta L_{\rho _{\mu +1}}x_{k}^{\sigma _{\mu +1}+2\widetilde{\sigma }-2\varepsilon}x_{k+1}^{p_{\mu }-1} - x_0 L_\gamma  x_k^{{\sigma_\mu  }-\delta }\in I .\end{equation}

Since $L_\gamma  x_k^{\sigma_\mu -\delta} \in \widetilde{\Ap(S)}$ we have $r'_{\mu +1}\in\{0,-1\}$. We have to consider two case cases.\\
\begin{enumerate}
\item If $r'_{\mu +1}=0$ then $\rho _{\mu +1}>0, s_{\mu}-s_{\mu +1}>1$. The only possible cases for $\card( \widetilde{PF(S)_1} )$ are 1b),1d) or 1e).

\begin{enumerate}
\item In cases 1b),1d) we have $\widetilde{\rho }\in \{0,1\}, \Gamma=1, \Delta=k-\rho _{\mu +1}, \varepsilon =1$ so that  (\ref{caseIbis1}) becomes 
\begin{equation}\label{caseIbis2}  x_0 ^{h}x_1 x_{k}^{\sigma _{\mu +1}+2\widetilde{\sigma }-1}x_{k+1}^{p_{\mu }-1} - x_0 L_\gamma  x_k^{{\sigma_\mu  }-\delta }\in I .\end{equation}
which implies $h=1$. Let $\Theta =\sigma _{\mu +1}+2\widetilde{\sigma }-1-({\sigma_\mu  }-\delta )$. 
By definition $s_{\mu}-s_{\mu +1}=(\sigma _{\mu}-\sigma _{\mu +1})k+ \rho _{\mu}-\rho _{\mu +1}$. So if $\rho _{\mu}=\rho _{\mu +1}$ then $\widetilde{\sigma }= \sigma _{\mu}-\sigma _{\mu +1}, \widetilde{\rho }=\rho _{\mu}-\rho _{\mu +1}$. If   $\rho _{\mu}>\rho _{\mu +1}$ then $\widetilde{\sigma }= \sigma _{\mu}-\sigma _{\mu +1}$, and if $\rho _{\mu}<\rho _{\mu +1}$ then $\widetilde{\sigma }= \sigma _{\mu}-\sigma _{\mu +1}-1$, $\widetilde{\rho }=k+\rho _{\mu}-\rho _{\mu +1}$.\\ 
Suppose $\rho _{\mu}<\rho _{\mu +1}$. Then $\widetilde{\rho }\geq 1$ and we have equality if and only if $\rho _{\mu}=0, \rho _{\mu +1}=k-1$,  so we are in case 4i) with $k=2$, but we assume $k\geq 3$. \\
So in cases 1b),1d) we have  $\widetilde{\sigma }= \sigma _{\mu}-\sigma _{\mu +1}$, by a simple computation we have  $\Theta =\widetilde{\sigma }-1+\delta\geq 0$. So that  (\ref{caseIbis2}) becomes 
\begin{equation}\label{caseIbis3}  x_1 x_{k}^{\widetilde{\sigma }-1+\delta}x_{k+1}^{p_{\mu }-1} -  L_\gamma  \in I .\end{equation}
which is possible if and only if $\gamma =1,\widetilde{\sigma }-1+\delta=0,p_{\mu }=1$. In cases 1b),1d) we have $\widetilde{\sigma }>0$,  so  $\widetilde{\sigma }-1+\delta=0$ only if $\widetilde{\sigma }=1,\delta=0$ so we are in case 7i) with $\rho _{\mu}=2$. Since $r'_{\mu+1}=0$ we have $r_{\mu+1}=-\sigma_\mu$. Set $l=\rho _{\mu +1}$,  $\sigma\defeq \sigma_\mu,p\defeq p_{\mu+1},r\defeq r_{\mu}$ then we have the table 
$$\begin{tabular}{lll|l}
   s & p & r & r'\\
    \hline 
   $\cdots$& $\cdots$& $\cdots$&$\cdots$\\
   $\sigma k+2$ & 1 & $r$& \\
   $(\sigma  -1)k+l$ & $p$ & $-\sigma$ & $0$
\end{tabular}\\$$
where $l=2$ in case 1b), $l=1$ in case 1d). By Lemma 2.2.4 of \cite{Mo2}  we get\\
 $a=(\sigma k+2)p-((\sigma-1)k+l), \ 
d=pr+\sigma,  c=((\sigma-1)k+l)r+(\sigma k+2)\sigma, 
$\\
with $h=1,k\geq 3, 1\leq l\leq 2$, $p,\sigma \geq 2, r<-(\sigma +1)$. Moreover we have $t(S)=k-l+1 $.

\item In case 1e) we have $\widetilde{\rho }>1, \Gamma=1, \Delta=\min {\widetilde{\rho }-1, k-\rho _{\mu +1}}, \varepsilon =0$ so that  (\ref{caseIbis1}) becomes 
\begin{equation}\label{caseIbis4} x_1 x_\Delta x_{\rho _{\mu +1}}x_{k}^{\sigma _{\mu +1}+2\widetilde{\sigma }}x_{k+1}^{p_{\mu }-1} - 
x_0 L_\gamma  x_k^{{\sigma_\mu  }-\delta }  \in I .\end{equation}

Now we consider three cases.\\ 
If   $\rho _{\mu}=0$ then we are in case 4i), which implies $k=2$, contrary to our hypothesis.\\ 
 If   $\rho _{\mu}>\rho _{\mu +1}$ then $\widetilde{\rho}= \rho_{\mu}-\rho _{\mu +1}\leq k-\rho _{\mu +1}+1$, which implies $\Delta =\rho_{\mu}-\rho _{\mu +1}-1$.  So Equation (\ref{caseIbis4}) becomes
 \begin{equation}\label{caseIbis5} x_1 x_{\rho_{\mu}-\rho _{\mu +1}-1} x_{\rho _{\mu +1}}x_{k}^{\sigma _{\mu +1}+2\widetilde{\sigma }}x_{k+1}^{p_{\mu }-1} - 
x_0 L_\gamma  x_k^{{\sigma_\mu  }-\delta }  \in I ,\end{equation} but $x_1 x_{\rho_{\mu}-\rho _{\mu +1}-1} x_{\rho _{\mu +1}}-x_0^{2h}x_{\rho _{\mu}}\in I$ so we have 
 $x_0^{2h} x_{\rho_{\mu}} x_{k}^{\sigma _{\mu +1}+2\widetilde{\sigma }}x_{k+1}^{p_{\mu }-1} - x_0  L_\gamma x_k^{{\sigma_\mu  }-\delta }  \in I$, which is not possible since $h\geq 1$.\\

If $0<\rho _{\mu}<\rho _{\mu +1}$ then $\widetilde{\sigma }= \sigma _{\mu}-\sigma _{\mu +1}-1$, $\widetilde{\rho }=k+\rho _{\mu}-\rho _{\mu +1}\geq k-\rho _{\mu +1}+1$, which implies $\Delta =k-\rho _{\mu +1}$. Equation (\ref{caseIbis4}) becomes
\begin{equation}\label{caseIbis52} x_1 x_{k-\rho _{\mu +1}} x_{\rho _{\mu +1}}x_{k}^{\sigma _{\mu +1}+2\widetilde{\sigma }}x_{k+1}^{p_{\mu }-1} - 
x_0 L_\gamma  x_k^{{\sigma_\mu  }-\delta }  \in I .\end{equation}
But $x_{k-\rho _{\mu +1}} x_{\rho _{\mu +1}}-x_0^h x_k\in I$, hence we have 
\begin{equation}\label{caseIbis53}x_0^h x_1 x_{k}^{\sigma _{\mu +1}+2\widetilde{\sigma }+1}x_{k+1}^{p_{\mu }-1} - 
x_0 L_\gamma  x_k^{{\sigma_\mu  }-\delta }  \in I ,\end{equation}
which implies $h=1$. Note that since   $\sigma _{\mu +1}+2\widetilde{\sigma }+1-({\sigma_\mu  }-\delta )  =\widetilde{\sigma }+\delta\geq 0,$  we have 
\begin{equation}\label{caseIbis54} x_1 x_{k}^{\widetilde{\sigma }+\delta }x_{k+1}^{p_{\mu }-1} - 
 L_\gamma    \in I ,\end{equation} this is  possible case if and only if $\widetilde{\sigma }=\delta =0, \gamma=1,p_{\mu }=1$. So we are in case 7i) with $\rho_\mu=2$.  Since $r'_{\mu+1}=0$ we have $r_{\mu+1}=-\sigma_\mu$. We set $l=\rho _{\mu +1}$,  $\sigma\defeq \sigma_\mu,p\defeq p_{\mu+1},r\defeq r_{\mu}$. We have the table 
$$\begin{tabular}{lll|l}
   s & p & r & r'\\
    \hline 
   $\cdots$& $\cdots$& $\cdots$&$\cdots$\\
   $\sigma k+2$ & 1 & $r$& \\
   $(\sigma  -1)k+l$ & $p$ & $-\sigma$ & $0$
\end{tabular}$$
where $l>2$.  By Lemma 2.2.4 of \cite{Mo2}  we get\\
 $a=(\sigma k+2)p-((\sigma-1)k+l), \ 
d=pr+\sigma,  c=((\sigma-1)k+l)r+(\sigma k+2)\sigma, 
$\\
with $h=1,k\geq 3, 3\leq l\leq k-1$, $p,\sigma \geq 2, r<-(\sigma +1)$. Moreover $t(S)=k-l+1 $.
\end{enumerate}
\item If $r'_{\mu +1}=-1$ then 
 (\ref{caseIbis1}) becomes 
\begin{equation}\label{caseII0}  L_\Gamma L_\Delta L_{\rho _{\mu +1}}x_{k}^{\sigma _{\mu +1}+2\widetilde{\sigma }-2\varepsilon}x_{k+1}^{p_{\mu }-1} -  L_\gamma  x_k^{{\sigma_\mu  }-\delta }\in I .\end{equation}
We have to study the cases 2a), 2b), 2c), and 2d).
\begin{enumerate}
\item  In cases 2a), 2d)   we have $\Gamma , \Delta>0, \Gamma +\Delta \leq k$, so  $x_{\Gamma } x_{\Delta }-x_0^h x_{\Gamma+\Delta  }$ which implies 
$x_0^h x_{\Gamma+\Delta  } L_{\rho _{\mu +1}}x_{k}^{\sigma _{\mu +1}+2\widetilde{\sigma }-2\varepsilon}x_{k+1}^{p_{\mu }-1} -  L_\gamma  x_k^{{\sigma_\mu  }-\delta }\in I$ a contradiction. 
\item  In case 2b) we have $s_{\mu}-s_{\mu +1}=1, \Gamma=\Delta =0,\varepsilon =0$, so we have 
$L_{\rho _{\mu +1}}x_{k}^{\sigma _{\mu +1}}x_{k+1}^{p_{\mu }-1} -  L_\gamma  x_k^{{\sigma_\mu  }-\delta }\in I$. We discuss on $\rho _\mu $.\\
  If $\rho _\mu=0$ then $\rho _{\mu +1}=k-1$, we are in case 4i) $\card( \widetilde{PF(S)_2} )=1$  if and only if $k=2$ but we assume $k\geq 3$.\\ 
If $\rho _\mu\geq 1$ then $\rho _{\mu +1}=\rho _\mu-1, \sigma _{\mu}=\sigma _{\mu +1}$ so we have  
$L_{\rho _{\mu +1}}x_{k}^{\delta }x_{k+1}^{p_{\mu }-1} -  L_\gamma \in I$ which is possible if and only if $p_{\mu }=1$ and  case 7i) with $\rho _\mu=2.$

We set $\sigma\defeq \sigma_\mu,p\defeq p_{\mu+1},r\defeq r_{\mu}$. Note that  $-1=r'_{\mu+1 } =r_{\mu+1 }+ h(\sigma+1) $ so $r_{\mu+1 }=- h(\sigma+1)-1.$ We have  the table
$$\begin{tabular}{lll|l}
   s & p & r & r'\\
    \hline 
   $\cdots$& $\cdots$& $\cdots$&$\cdots$\\
   $\sigma k+2$ & 1 & $r$& \\
   $\sigma k+1$ & $p$ & $- h(\sigma+1)-1$ & $-1$
\end{tabular}$$
 By Lemma 2.2.4 of \cite{Mo2}  we get\\
 $ \phantom{(1)....    }\hskip 3cm a=(\sigma k+2)p-(\sigma k+1), \
d=pr+h(\sigma+1)+1, $\\ 
$ \phantom{(1)....    }\hskip 3cm c=(\sigma k+1)r+(\sigma k+2)(h(\sigma+1)+1), $\\
with  $h\geq 1, \sigma\geq 1,p\geq 2, r>- h(\sigma+1)-1,$  $t(S)=2$.

\item  In case 2c) we have $\widetilde{\rho }=1,\widetilde{\sigma }>0, \Gamma=1, \Delta =k,\varepsilon =1$, 
then 
 (\ref{caseII0}) becomes 
\begin{equation}\label{caseII1}  x_1 x_k L_{\rho _{\mu +1}}x_{k}^{\sigma _{\mu +1}+2\widetilde{\sigma }-2}x_{k+1}^{p_{\mu }-1} -  L_\gamma  x_k^{{\sigma_\mu  }-\delta }\in I .\end{equation}
Now we discuss on $\rho _{\mu +1}$.\\
If $\rho _{\mu +1}=0$ then $\rho _{\mu}=1, \widetilde{\sigma }=\sigma_\mu-\sigma _{\mu+1}$ which implies that we are in case 6ii). So that $\gamma =1,\delta =1$,   
$\sigma _{\mu +1}+2\widetilde{\sigma }-1-{\sigma_\mu  }+\delta= \widetilde{\sigma }$.  (\ref{caseII0}) becomes 
\begin{equation}\label{caseII2}  x_1 x_{k}^{\widetilde{\sigma }}x_{k+1}^{p_{\mu }-1} -  x_1 \in I .\end{equation} which is impossible since $ \widetilde{\sigma }>0$.\\
If $\rho _{\mu +1}>0$. Note that if   $\rho _{\mu}>\rho _{\mu +1}$ then  $\rho _{\mu +1}=\rho _{\mu}-1$, so we are in case 7i) with $\rho _{\mu}=2,\rho _{\mu +1}=1$, and we have  that
 (\ref{caseII0}) becomes   $x_1 x_k x_1 x_{k}^{\sigma _{\mu +1}+2\widetilde{\sigma }-2}x_{k+1}^{p_{\mu }-1} -  L_\gamma  x_k^{{\sigma_\mu  }-\delta }\in I$,  which leads to a contradiction since $x_1^2-x_0^h x_2\in I$.\\
 If   $\rho _{\mu}<\rho _{\mu +1}$ then $\widetilde{\rho }=1$ implies  $\rho _{\mu +1}=k-1,\rho _{\mu}=0$, so we are in case 4i), which implies $k=2$, contrary to our assumptions.
\end{enumerate}
\end{enumerate}
\end{proof}

\end{subsection}
\begin{subsection}{Almost symmetric AAG-semigroups with $\widetilde{\rm PF(S)_1}=\{\widetilde{\rm Frob(S)}\}$}
	
\begin{theorem}\label{almostsym2}Suppose $\widetilde{PF(S)_1}=\{\widetilde{\rm Frob(S)}\} $, $k\geq 3, t(S)\geq 2$ and either  $r'_\mu\geq h$ or $ \rho_\mu =0$. Then $S$ is almost symmetric if and only if  
\item (i) Either
$$a=(\sigma k+l+2)(p+1)-lp, \ 
d=-(p+1)\sigma-pr, \   
 c=-l\sigma -(\sigma k+l+2)r, 
$$ 
with $h=1,k\geq 4, \sigma \geq 1, 1\leq l\leq k-3, r<-\sigma$. Moreover  $t(S)=l+1$ and $ \widetilde{PF(S)_1} =\{x_1  x_k^{\sigma}  x_{k+1}^{p}\}$, $ \widetilde{PF(S)_2} =\{x_2  x_k^{\sigma } ,\dots ,x_{l+1}  x_k^{\sigma}  \}.$ 

\item (ii) Or 
$$a=(\sigma k)(p+1)-(k-2)p, \ 
d=(1- h\sigma)(p+1)-pr, c=(k-2)(1- \sigma) -\sigma k r, 
$$ 
with $h \geq  1,k\geq 3, p\geq 1,\sigma \geq 2, r<-h$. Moreover we have  $t(S)=k-1$ and $ \widetilde{PF(S)_1} =\{x_1  x_k^{\sigma -1 }  x_{k+1}^{p}\}$, $ \widetilde{PF(S)_2} =\{x_2  x_k^{\sigma -1} ,\dots ,x_{k -1}  x_k^{\sigma -1 }  \}$.

\item (iii) Or
$$a=(\sigma k+1)(p+1)-(k-1)p, \ 
d=(p+1)(1-h\sigma)-pr,   c=(k-1)(1-h\sigma) -(\sigma k+1)r, 
$$ 
with $h\geq 1,k\geq 3, p,\sigma \geq 1, r<\min {-h,1-h\sigma } $. Moreover  $t(S)=k$ and  $ \widetilde{PF(S)_1} =\{x_1  x_k^{\sigma -1 }  x_{k+1}^{p}\}$, $ \widetilde{PF(S)_2} =\{x_2  x_k^{\sigma -1} ,\dots , x_k^{\sigma }  \}$.
\item (iv) Or 
$$a=( k+1)(p+1)-kp =k+p+1, \ 
d=-(p+1)-pr, c=-k-(k+1)r, 
$$
with $h=1,k\geq 3,p\geq 1, r<-1$. Moreover  $t(S)=k+1$ and  $ \widetilde{PF(S)_1} =\{x_{k+1}^{p}\}$, $ \widetilde{PF(S)_2} =\{x_1 ,\dots ,x_k \}$.

\item (v) Or
$$a=( \sigma k+1)(p+1)-(2k-1)p, \ 
d=-(p+1)\sigma-pr,  c=-\sigma(2k-1)-(\sigma k+1)r, 
$$
with $h=1,k\geq 3,\sigma \geq 2,p\geq 1, r<-\sigma $. Moreover  $t(S)=2$ and $ \widetilde{PF(S)_1} =\{x_1  x_k^{\sigma - 2 }  x_{k+1}^{p}\}$, $ \widetilde{PF(S)_2} =\{x_1  x_k^{\sigma -1 }  \}$. 

\end{theorem}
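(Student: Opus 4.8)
The plan is to mirror the strategy of Theorem~\ref{almostsym1}, but with the roles of $\widetilde{PF(S)_1}$ and $\widetilde{PF(S)_2}$ interchanged: here the singleton is $\widetilde{PF(S)_1}=\{\widetilde{\rm Frob(S)}\}$, so under the criterion of Lemma~\ref{Nari-M} the pairing $M_iM_{t(S)-i}-x_0\widetilde{\rm Frob(S)}\in I$ matches extreme elements of $\widetilde{PF(S)_2}$ against one another, while the Frobenius monomial sits alone in $\widetilde{PF(S)_1}$. First I would read off from Theorem~\ref{tPSF} which configurations produce a singleton $\widetilde{PF(S)_1}$: since $\card(\widetilde{PF(S)_1})=1$ with $k\geq 3$, cases $2(a)$ and $2(c)$ are discarded and cases $1(a),1(c)$ (empty) cannot contain the Frobenius, leaving $1(b)$ and $1(d)$ with $\rho_{\mu+1}=k-1$, case $1(e)$ with $\min\{\widetilde\rho-1,k-\rho_{\mu+1}\}=1$, case $2(b)$, and case $2(d)$ with $\widetilde\rho=2$. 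This pins $\widetilde{\rm Frob(S)}$ down to one of $x_1x_k^{\widetilde\sigma-\varepsilon}x_{k+1}^{p_{\mu+1}-1}$ or $x_{k+1}^{p_{\mu+1}-1}$.

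Next I would take the extreme elements $N_\Gamma,N_\Delta$ of $\widetilde{PF(S)_2}$ (whose explicit index ranges come from cases $3$--$7$ of Theorem~\ref{tPSF}, organized by $\rho_\mu\in\{0,1,>1\}$) and rewrite $N_\Gamma N_\Delta$ modulo the Gr\"obner basis $\mathcal G=\mathcal A\cup\mathcal B\cup\mathcal C\cup\mathcal D$ of Theorem~\ref{AAG-1}. The key reductions are: collapsing $x_\Gamma x_\Delta$ via a relation of $\mathcal A$ when $\Gamma+\Delta\leq k$, or via $x_ix_j-x_{i+j-k}x_k$ when $\Gamma+\Delta>k$, and then reducing the accumulated power of $x_{k+1}$ through the $\mathcal D$-relation $x_{k+1}^{p_{\mu+1}}-x_0^{-r'_{\mu+1}}L_{\rho_{\mu+1}}x_k^{\sigma_{\mu+1}}\in I$. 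Matching the reduced monomial against $x_0\widetilde{\rm Frob(S)}$ then forces the arithmetic constraints: since the target lies in $\widetilde{\Ap(S,a)}$ one obtains $r'_{\mu+1}\in\{0,-1\}$, comparison of the $x_{k+1}$-exponents ties $p_\mu,p_{\mu+1}$ and $\widetilde\sigma$ together, and Lemma~\ref{symm-M}(2) pins $h=1$ precisely in those configurations where the collapse of $x_\Gamma x_\Delta$ uses an $\mathcal A$-relation with $\Gamma+\Delta\leq k$ (while cases routed through $\mathcal B$ or $\mathcal C$ may keep $h\geq 1$).

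For each surviving configuration I would record the two rows $s_\mu,s_{\mu+1}$ of the sequence table together with their $p$- and $r$-values, reading $r_{\mu+1}$ off from $r'_{\mu+1}=r_{\mu+1}+h(\sigma_{\mu+1}+l_{\mu+1})$, and invoke Lemma~2.2.4 of \cite{Mo2} (equivalently the determinant identities $s_ip_{i+1}-s_{i+1}p_i=a$, $s_{i+1}r_i-s_ir_{i+1}=c$, $p_{i+1}r_i-p_ir_{i+1}=d$) to translate this data into the closed formulas for $a,d,c$. The cardinality $\card(\widetilde{PF(S)_2})$ then gives $t(S)$, and the embedding-dimension hypothesis ($s_\mu>k$, $p_{\mu+1}>1$, and no generator expressible through the others) eliminates the degenerate tables, exactly as in Theorem~\ref{tGor}. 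This sorting yields the five families (i)--(v), with (iv) arising from the case-$2(b)$/case-$4$ pairing and (v) from the all-singleton situation. Finally, for the converse, I would verify directly that each listed family satisfies $N_\Gamma N_\Delta-x_0\widetilde{\rm Frob(S)}\in I$ for every pairing of $\widetilde{PF(S)_2}$, so that Lemma~\ref{Nari-M} certifies almost symmetry.

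The main obstacle I anticipate is the bookkeeping in the matching step: there are several admissible pairs (one $\widetilde{PF(S)_1}$-case combined with one $\widetilde{PF(S)_2}$-case), and for each one must track how many $\mathcal D$-reductions are needed, which $\mathcal A$- or $\mathcal C$-collapse applies, and whether the resulting $x_0$-exponent is compatible with the single power of $x_0$ demanded by Lemma~\ref{symm-M}(2). Getting the exponent arithmetic exactly right---and correctly discarding the configurations that drive the embedding dimension below $k+2$ or collapse $d$ to $0$---is where the genuine work lies; the final conversion to $a,d,c$ is then a routine determinant computation.
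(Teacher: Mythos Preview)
Your high-level strategy is correct and matches the paper's: organise by the cases of Theorem~\ref{tPSF}, impose the pairing equation $L_\Gamma L_\Delta N_{\sigma_\mu-\delta}^2-x_0\widetilde{\rm Frob(S)}\in I$ from Lemma~\ref{Nari-M}, reduce via the Gr\"obner basis, and read off the table and determinant identities. The paper in fact runs the case split over the $\widetilde{PF(S)_2}$-cases $4$--$7$ rather than first pinning down the singleton $\widetilde{PF(S)_1}$, but that is only an organisational difference.

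There is, however, a genuine gap in your key reduction step. You propose to reduce the accumulated power of $x_{k+1}$ via the $\mathcal D$-relation and conclude $r'_{\mu+1}\in\{0,-1\}$. That is the mechanism of Theorem~\ref{almostsym1}, where the squared monomial $M_{\widetilde\sigma-\varepsilon}^2$ carries $x_{k+1}^{2(p_{\mu+1}-1)}\geq x_{k+1}^{p_{\mu+1}}$ and $\mathcal D$ applies. Here the roles are swapped: the squared element is $N_{\sigma_\mu-\delta}^2$, whose $x_{k+1}$-exponent is only $2(p_{\mu+1}-p_\mu-1)$, in general smaller than $p_{\mu+1}-1$ (indeed in every surviving family one ends with $p_{\mu+1}=p_\mu+1$), while its $x_k$-exponent $2(\sigma_\mu-\delta)$ is in excess. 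The correct move is therefore to reduce the excess $x_k$-power using the $\mathcal B$-relations $L_{\rho_\mu}x_k^{\sigma_\mu}-x_0^{r'_\mu}x_{k+1}^{p_\mu}\in I$ (and their shifted companions), which raises the $x_{k+1}$-exponent and introduces $x_0^{r'_\mu}$. The constraint one actually obtains is on $r'_\mu$ (for instance $r'_\mu=1$ in cases~4ii) and~7ii), $r'_\mu=h+1$ in case~5ii), $r'_\mu=h=1$ in cases~5i)/6i) and~6ii)), not on $r'_{\mu+1}$; and it is the relation $p_{\mu+1}-p_\mu-1=0$, forced by the residual $x_{k+1}$-exponent after the $\mathcal B$-reduction, that drives the shape of the tables. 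Once you swap $\mathcal D$ for $\mathcal B$ and track $r'_\mu$ instead of $r'_{\mu+1}$, the rest of your plan (Lemma~\ref{symm-M} to force $h=1$ when $\Gamma+\Delta\leq k$, discarding embedding-dimension degenerations, and the determinant identities of \cite{Mo2}) goes through as you describe.
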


\begin{proof} 
We have $ \widetilde{PF(S)_1} =\{L_\gamma  x_k^{\widetilde{\sigma }-\varepsilon } x_{k+1}^{p_{\mu +1}-1}\}$ where $\gamma , \varepsilon\in \{0,1\}$. By  Theorem \ref {tPSF}  we have $ \widetilde{PF(S)_2} =\{x_\Gamma  N_{\sigma_\mu -\delta },\dots ,x_\Delta  N_{\sigma_\mu -\delta } \}$ for some $\Gamma \leq \Delta , \delta \in \{0,1\}$, where $N_{\sigma_\mu -\delta }\defeq x_k^{\sigma_\mu -\delta  } x_{k+1}^{p_{\mu +1}-p_{\mu}-1}$. Set $M_{\widetilde{\sigma }-\varepsilon}\defeq x_k^{\widetilde{\sigma }-\varepsilon } x_{k+1}^{p_{\mu +1}-1}$.
Since $S$ is almost symmetric we have 
\begin{equation}\label{caseIter} L_\Gamma L_\Delta   N_{\sigma_\mu -\delta }^2-x_0 L_\gamma M_{\widetilde{\sigma }-\varepsilon}\in I.\end{equation}
Now we consider all the cases about  $\widetilde{PF(S)_2}$ in  Theorem \ref {tPSF}. 
\begin{enumerate}
\item Cases 4i) or 7i). We have  $\Gamma +\Delta \leq k.$ Note that $ \delta =1,\rho _\mu=0, \Gamma+\Delta=k, \sigma_\mu \geq 2 $ in case 4i) and $ \delta =0,\rho _\mu>1, \Gamma+\Delta=\rho _{\mu}  $ in case 7i).
Since $x_{\Gamma }x_{\Delta }-x_0^h x_{\Gamma+\Delta  }\in I$,   equation (\ref{caseIter}) becomes:
\begin{equation}\label{caseII2-6-a}
x_0 ^{h-1} x_{ \Gamma + \Delta }x_k^{2\sigma_\mu -2\delta  } x_{k+1}^{2(p_{\mu +1}-p_{\mu}-1)}- x_\gamma  x_k^{\widetilde{\sigma }-\varepsilon } x_{k+1}^{p_{\mu +1}-1} \in I.\end{equation}
Let $\Theta =2\sigma_\mu -1$ in case 4i) and $\Theta =2\sigma_\mu $ in case 7i). We can write 
\begin{equation}\label{caseII2-6-4}
x_0 ^{h-1} L_{\rho _\mu} x_k^{\Theta } x_{k+1}^{2(p_{\mu +1}-p_{\mu}-1)}- x_\gamma  x_k^{\widetilde{\sigma }-\varepsilon } x_{k+1}^{p_{\mu +1}-1} \in I.\end{equation}
Since $ \Theta \geq \sigma _\mu $ and  $L_{\rho _\mu} x_k^{\sigma_\mu} - x_0^{r'_{\mu} } x_{k+1}^{p_{\mu}}\in I$ we have 
$$x_0 ^{r'_{\mu} +h-1}
x_k^{\Theta - \sigma _\mu} x_{k+1}^{2(p_{\mu +1}-p_{\mu}-1)}- x_\gamma  x_k^{\widetilde{\sigma }-\varepsilon } x_{k+1}^{p_{\mu +1}-1}\in I,$$ with $r'_{\mu} +h-1\geq 1 $, but $x_\gamma  x_k^{\widetilde{\sigma }-\varepsilon } x_{k+1}^{p_{\mu +1}-1}\in \widetilde{\Ap(S,a_0)},$ a contradiction.

\item Case 4ii). We have  $\Gamma =\widetilde{\rho } ,\Delta=k-1, \delta =1,\rho _\mu=0, s _{\mu+1}< k-1.$ Note that  $s_{\mu +1}=\rho _{\mu +1}<k-1 $, so that $\widetilde{\rho }>1, r'_{\mu +1}<0,\sigma_\mu\geq 2,  \widetilde{\sigma }=\sigma_\mu -1\geq 1$. This implies that we are in case 2d) with $\widetilde{\rho }=2, \gamma =1, \varepsilon =0$. Hence $\Gamma +\Delta =k+1$.
Equation (\ref{caseIter}) becomes:
\begin{equation}\label{caseII2-6-a2} x_{1 }
x_k^{2\sigma_\mu -1 } x_{k+1}^{2(p_{\mu +1}-p_{\mu}-1)}-x_0 x_1  x_k^{\sigma_\mu -1} x_{k+1}^{p_{\mu +1}-1} \in I.\end{equation} 
But $x_k^{\sigma_\mu}-x_0^{r'_{\mu}} x_{k+1}^{p_{\mu}}\in I$ and $2(p_{\mu +1}-p_{\mu}-1)-p_{\mu}-(p_{\mu +1}-1)=p_{\mu +1}-p_{\mu}-1$ so we have 
$$ x_0^{r'_{\mu}-1} x_{k+1}^{p_{\mu +1}-p_{\mu}-1}-1\in I,$$ which is possible if and only if $r'_{\mu}=1, p_{\mu +1}=p_{\mu}+1$.  
Moreover we have $1=r'_{\mu } =r_{\mu }+ \sigma_\mu h$ so $r_{\mu}=1- \sigma_\mu h$. Set $\sigma \defeq \sigma_\mu, p\defeq p_\mu, r\defeq r_{\mu+1}$,  we get
$$ \widetilde{PF(S)_1} =\{x_1  x_k^{\sigma -1 }  x_{k+1}^{p}\} ,   \widetilde{PF(S)_2} =\{x_2  x_k^{\sigma -1} ,\dots ,x_{k -1}  x_k^{\sigma -1 }  \}$$
 and the  table
 $$\begin{tabular}{lll|l}
   s & p & r & r'\\
   \hline 
   $\cdots$& $\cdots$& $\cdots$&$\cdots$\\
   $\sigma k$ & $p$ & $1- \sigma h$& $1$\\
   $k-2$ & $p+1$ & $r$ &  $<0$\\
\end{tabular}$$
  By Lemma 2.2.4 of \cite{Mo2}  we get\\
$ \phantom{(1)....    }\hskip 0.2cm a=(\sigma k)(p+1)-(k-2)p, \ 
d=(1-\sigma h)(p+1)-pr, $\\
$ \phantom{(1)....    }\hskip 0.2cm c=(k-2)(1- \sigma h) -\sigma k r, 
$\\
with $h\geq 1,\sigma\geq 2,p\geq 1,r<-h,t(S)=k - 1$.
  
\item Cases 5i) or 6i). We have $\rho _{\mu}=1, \delta =1,$ and  $\Gamma +\Delta =k+1$.
Equation (\ref{caseIter}) becomes:
\begin{equation}\label{caseII2-6-b} x_{1 }
x_k^{2\sigma_\mu -1 } x_{k+1}^{2(p_{\mu +1}-p_{\mu}-1)}-x_0 L_\gamma  x_k^{\widetilde{\sigma }-\varepsilon} x_{k+1}^{p_{\mu +1}-1} \in I.\end{equation} 
But $\sigma_\mu\geq 1$, $x_{1 }x_k^{\sigma_\mu}-x_0^{r'_{\mu}} x_{k+1}^{p_{\mu}}\in I$ and $2(p_{\mu +1}-p_{\mu}-1)-p_{\mu}-(p_{\mu +1}-1)=p_{\mu +1}-p_{\mu}-1$, so we have 
$$ x_0^{r'_{\mu}-1} x_k^{\sigma_\mu-1}x_{k+1}^{p_{\mu +1}-p_{\mu}-1}-L_\gamma  x_k^{\widetilde{\sigma }-\varepsilon}\in I,$$ which implies $r'_{\mu}=1=h$. Now we are in case 6i), so we have $s_{\mu}-s_{\mu +1}=1$. Thus $\rho _{\mu +1}=0$ and the only possible  case is 2b) with $\sigma_\mu=1$. Moreover we have $1=r'_{\mu } =r_{\mu }+ \sigma_\mu+1= r_{\mu }+2$ so $r_{\mu}=-1 $. Set $p\defeq p_\mu, r\defeq r_{\mu+1}$, we have 
$ \widetilde{PF(S)_1} =\{x_{k+1}^{p}\}$, $ \widetilde{PF(S)_2} =\{x_1 ,\dots ,x_k \}$.
  $$\begin{tabular}{lll|l}
   s & p & r & r'\\
    \hline 
   $\cdots$& $\cdots$& $\cdots$&$\cdots$\\
   $  k+1$ & $p$ & $-1 $& $1$\\
   $k$ & $p+1$ & $r$ &$<0$ \\
\end{tabular}$$
  By Lemma 2.2.4 of \cite{Mo2}  we get\\
$ \phantom{(1)....    }\hskip 0.2cm a=( k+1)(p+1)-kp =k+p+1, \ 
d=-(p+1)-pr, $\\
$ \phantom{(1)....    }\hskip 0.2cm c=-k-(k+1)r,  
$\\
with $h=1,k\geq 3,p\geq 1, r<-1$, $t(S)=k+1$.

\item Case 5ii). We have $\rho _{\mu}=1,\delta =1,1<s_{\mu+1}=\rho _{\mu+1}<k, r'_\mu > h$ and  $\Gamma +\Delta =k+\widetilde{\rho }\geq k+1$.
If $\widetilde{\rho }=1$ then $\rho _{\mu+1}=0$ a contradiction. So we have $\widetilde{\rho }\geq 2$. Since $\rho _{\mu+1}>1$ we are in case 2d), which implies  $\widetilde{\rho }= 2, s_{\mu+1}=\rho _{\mu+1}=k-1,\varepsilon =0,\gamma =1.$ Note also that $\widetilde{\sigma }=\sigma_\mu -1$.
Equation (\ref{caseIter}) becomes:
\begin{equation}\label{caseII2-6-b1} x_{2 }
x_k^{2\sigma_\mu -1 } x_{k+1}^{2(p_{\mu +1}-p_{\mu}-1)}-x_0 x_1  x_k^{\sigma_\mu -1} x_{k+1}^{p_{\mu +1}-1} \in I.\end{equation} 
But $\sigma_\mu\geq 1$, $x_{2 }x_k^{\sigma_\mu}-x_0^{r'_{\mu}-h}x_1  x_{k+1}^{p_{\mu}}\in I$ and $2(p_{\mu +1}-p_{\mu}-1)-p_{\mu}-(p_{\mu +1}-1)=p_{\mu +1}-p_{\mu}-1,$ so we have $x_0^{r'_{\mu}-h-1}x_{k+1}^{p_{\mu +1}-p_{\mu}-1}-1\in I$, which is possible if and only if $r'_{\mu}=h+1, p_{\mu +1}-p_{\mu}-1=0$. 
Since  $h+1=r'_{\mu } =r_{\mu }+ h(\sigma+1) ,$   we have $r_{\mu}=1- h\sigma$, on the other hand   $r' _{\mu+1 }=r _{\mu+1 }+h<0 $, so $r_{\mu+1 }<-h$. Set $\sigma \defeq \sigma_\mu, p\defeq p_\mu, r\defeq r_{\mu+1}$,  we have
$$ \widetilde{PF(S)_1} =\{x_1  x_k^{\sigma -1 }  x_{k+1}^{p}\}, \widetilde{PF(S)_2} =\{x_2  x_k^{\sigma -1} ,\dots , x_k^{\sigma }  \},t(S)=k.$$ 
 $$ \begin{tabular}{lll|l}
   s & p & r & r'\\
    \hline 
   $\cdots$& $\cdots$& $\cdots$&$\cdots$\\
   $\sigma k+1$ & $p$ & $1- h\sigma $&$h+1$\\
   $k-1$ & $p+1$ & $r$ & $<0$\\
\end{tabular}$$
  By Lemma 2.2.4 of \cite{Mo2}  we get\\
$ \phantom{(1)....    }\hskip 0.2cm a=(\sigma k+1)(p+1)-(k-1)p, \ 
d=(p+1)(1-h\sigma)-pr, $\\
$ \phantom{(1)....    }\hskip 0.2cm c=(k-1)(1-h\sigma) -(\sigma k+1) r, $\\
with  $h\geq 1,p\geq 1, \sigma \geq 1, r< -h$.
 
\item Case 5iii) We have  $\Gamma +\Delta=0$,  $\rho _{\mu}=1,s _{\mu+1}=1$ so $\widetilde{\rho}=0$ and $r'_{\mu+1}<0$. The possible case of Theorem \ref {tPSF}  with $\card(\widetilde{PF(S)_1})=1  $  satisfying $\widetilde{\rho}=0$ is  2a) with  $k=2$, it is impossible since we suppose $k\geq 3$.
 
\item Case 6ii) We have  $\Gamma =\Delta=1, \delta =1,\rho _\mu=1,h=r'_\mu, s _{\mu+1}>k, s_{\mu}-s_{\mu+1}>1$, so Equation (\ref{caseIter}) becomes:
\begin{equation}\label{caseII-14-1} x_0 ^{h-1}x_{2}
x_k^{2\sigma_\mu -2} x_{k+1}^{2(p_{\mu +1}-p_{\mu}-1)}-L_\gamma  x_k^{\widetilde{\sigma }-\varepsilon } x_{k+1}^{p_{\mu +1}-1} \in I.\end{equation}
which implies $h=1.$ If $\gamma =0$ then we are in case 2b) and   $\widetilde{\sigma }=\varepsilon=0$ which implies  $x_{k+1}^{p_{\mu +1}-1} \in \In(I)$, which is not possible. 
Thus we have  $\gamma =1,h=1$ and 
\begin{equation}\label{caseII-1} x_{2}
x_k^{2\sigma_\mu -2} x_{k+1}^{2(p_{\mu +1}-p_{\mu}-1)}-x_1  x_k^{\widetilde{\sigma }-\varepsilon } x_{k+1}^{p_{\mu +1}-1} \in I.\end{equation}
If $\sigma _{\mu}=1$ then $s_{\mu}=k+1$ which implies $s_{\mu+1}<k$ a contradiction. So we have  $\sigma _{\mu}\geq 2$. 
From the \gbb of $I$, we have
$x_2 x_k^{\sigma_\mu } -x_1 x_{k+1}^{p_{\mu}} \in I$, note also that $2p_{\mu +1}-p_{\mu}-2-(p_{\mu +1}-1)=p_{\mu +1}-p_{\mu}-1\geq 0$. So equation (\ref{caseII-1}) becomes:
$$
x_k^{\sigma_\mu -2 } x_{k+1}^{p_{\mu +1}-p_{\mu}-1}-x_k^{\widetilde{\sigma }-\varepsilon }  \in I,$$  which is possible if and only if 
$p_{\mu +1}=p_{\mu}+1, \sigma_\mu -2-(\widetilde{\sigma }-\varepsilon)=0$. We have to examine two cases:\\
If  $\varepsilon =1$ then  the possible cases are 1b), 1d) or 2a). The case 2a) implies $k=2$ and the case 1b) implies  $\widetilde{\rho }=0, \rho  _{\mu+1}=1$ so $k=2$. On the other hand the case 1d) implies $\widetilde{\rho }=1, \rho  _{\mu+1}=k-1,$ but $\rho  _{\mu}=1$ which gives $\widetilde{\rho }=2$, a contradiction.\\
If $ \varepsilon=0$, then the possible cases are 1e),  or 2d). In both cases we have $\widetilde{\rho }=2, \rho  _{\mu+1}=k-1$, so   
$\sigma_\mu =\sigma _{\mu+1}+\widetilde{\sigma }+1$. We have $\sigma_\mu -2-(\widetilde{\sigma }-\varepsilon)= \sigma_\mu -2-(\sigma_\mu-\sigma _{\mu+1}-1)=0$ if and only if $\sigma _{\mu+1}=1$. In the case  1e) we have $r' _{\mu+1}=0$, which implies $r_{\mu+1}=-2. $ Since $h= 1$ we can suppose $d>0$, which implies $r_{\mu}>r_{\mu+1 }$, that is $-\sigma _{\mu}>-2$, but we have $\sigma _{\mu}\geq 2$, a contradiction, we can exclude the case 1e). The only possible case is 2d). Set $\sigma \defeq \sigma_\mu, p\defeq p_\mu, r\defeq r_{\mu+1}$, we have the table
$$\begin{tabular}{lll|l}
   s & p & r & r'\\
    \hline 
   $\cdots$& $\cdots$& $\cdots$&$\cdots$\\
   $ \sigma k+1$ & $p$ & $-\sigma  $& $1$\\
   $2k-1$ & $p+1$ & $r$ &$<0$ \\
\end{tabular}$$
 Apply Lemma 2.2.4 of \cite{Mo2},  we get\\
$ \phantom{(1)....    }\hskip 1cm a=( \sigma k+1)(p+1)-(2k-1)p, \ 
d=-(p+1)\sigma-pr, $\\
$ \phantom{(1)....    }\hskip 1cm c=-\sigma(2k-1)-(\sigma k+1)r, 
$\\
with $h=1,k\geq 3,\sigma \geq 2,p\geq 1, r<-\sigma $ and $t(S)=2$.

\item Case 7ii). We have  $\Gamma +\Delta=\widetilde{\rho }+\rho _\mu-1$ and  $\delta =0,s _{\mu+1}=\rho _{\mu+1}< \rho _\mu-1$, which implies  $\widetilde{\rho }\geq 2$, $r'_{\mu+1 }<0$ and  $\sigma_\mu =\widetilde{\sigma }$. The only possible case is 2d) with $\widetilde{\rho }=2$, so we have 
$\widetilde{\rho }+\rho _{\mu}-1=\rho _{\mu}+1\leq k$. From the \gbb of $I$ we have $x_2 x_{\rho _\mu-1 }-x_0^h x_{\rho _\mu+1 },
x_{\rho _\mu+1 }
x_k^{\sigma_\mu  } -x_{0}^{r'_{\mu}-h} x_{1}x_{k+1}^{p_{\mu}} \in I$. So Equation (\ref{caseIter}) becomes: 
\begin{equation}\label{caseII-1-b2}x_{0}^{r'_{\mu}-1} x_{1}x_{k+1}^{p_{\mu}}
x_k^{\sigma_\mu  } x_{k+1}^{2(p_{\mu +1}-p_{\mu}-1)}-x_1  x_k^{\sigma_\mu-\varepsilon } x_{k+1}^{p_{\mu +1}-1} \in I.\end{equation}
which implies $r'_{\mu}=h=1$. Since $2p_{\mu +1}-p_{\mu}-2-(p_{\mu +1}-1)=p_{\mu +1}-p_{\mu}-1\geq 0$ we get  
\begin{equation}\label{caseII-1-b3} 
x_k^{\varepsilon } x_{k+1}^{p_{\mu +1}-p_{\mu}-1}-1  \in I.\end{equation}
\\
This is possible if and only if $ \varepsilon =0, p_{\mu +1}=p_{\mu}+1$.
Since $ h=1$ we can suppose $d>0$, so the sequence $r_i$ is strictly decreasing. We have $1=r'_{\mu } =r_{\mu }+ \sigma_{\mu}+1 $ so $r_{\mu}=- \sigma_{\mu}, r_{\mu+1}<- \sigma_\mu  $. Set $\sigma =\sigma _\mu , p=p_{\mu}, l=s_{\mu +1}, r\defeq r_{\mu+1} $, then we have the table:
 \begin{center} \begin{tabular}{lll|l}
   s & p & r & r'\\
    \hline 
   $\cdots$& $\cdots$& $\cdots$&$\cdots$\\
   $\sigma k+l+2$ & $p$ & $- \sigma $& $1$\\
   $l$ & $p+1$ & $r $ & $<0$\\
\end{tabular}
\end{center} 
with $\sigma \geq 1,1\leq l<k-2, r<- \sigma $. We have $t(S)=l+1$.  By Lemma 2.2.4 of \cite{Mo2}  we get\\
$ \phantom{(1)....    }\hskip 0.2cm a=(\sigma k+l+2)(p+1)-lp, \ 
d=-(p+1)\sigma-p r,$\\
$ \phantom{(1)....    }\hskip 0.2cm c=-l\sigma -(\sigma k+l+2)r.  $
 \end{enumerate}
\end{proof}
\end{subsection}

\section{Formula for Frobenius number of  Almost Symmetric AAG-semigroups}
This section extends and generalizes all the results of \cite{R-G}.

 \begin{theorem}\label{almostsym1-2} Let $S$ be an AAG almost symmetric with $k\geq 3, t(S)\geq 2$ and either  $r'_\mu\geq h$ or $ \rho_\mu =0$. Then there is  a linear or quadratic formula for the Frobenius number in terms of $a,d,c,k$ and the type $t(S)$.
\end{theorem}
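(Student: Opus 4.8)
The plan is to read the formula off the complete classification already established in Theorems \ref{almostsym00}, \ref{almostsym0}, \ref{almostsym1} and \ref{almostsym2}. Together these split every almost symmetric AAG-semigroup satisfying our hypotheses (and with $t(S)\geq 2$) into a \emph{finite} list of explicit families; in each family the generators $a,d,c$ are given by explicit expressions in the internal parameters $\sigma,p,r,h,l$ (via Lemma 2.2.4 of \cite{Mo2} applied to a two-row table), the type $t(S)$ is given explicitly, and the monomial $\widetilde{\rm Frob(S)}$ is identified. Since by the definition of $\widetilde{\rm Frob(S)}$ and Corollary \ref{c10}(i) we have $F(S)=\varphi(\widetilde{\rm Frob(S)})-a$, it suffices to evaluate this expression family by family and rewrite it as a polynomial of degree at most $2$ in $a,d,c,k,t(S)$.

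In each family the Frobenius monomial has the shape $\widetilde{\rm Frob(S)}=x_i\,x_k^{\alpha}x_{k+1}^{\beta}$ with $i\in\{0,1\}$, $\alpha\in\{\sigma,\sigma-1,\sigma-2\}$ and $\beta\in\{p_{\mu+1}-1,\,p_{\mu+1}-p_\mu-1\}$. Expanding with $\deg x_i=ha+id$, $\deg x_k=ha+kd$, $\deg x_{k+1}=c$ gives, for $i\geq 1$,
\[
F(S)+a=(ha+id)+\alpha\,(ha+kd)+\beta c,
\]
the first term being dropped when $i=0$. The only terms a priori of degree larger than $2$ in the macroscopic invariants are the products $\alpha\,(ha+kd)$ and $\beta c$, because $\alpha,\beta$ are affine in $\sigma,p$ while $a,c$ themselves already contain products such as $\sigma k p$ and $\sigma r$. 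Thus the entire content of the theorem is the cancellation of these mixed products.

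The cancellation mechanism is uniform, and I would carry it out on Theorem \ref{almostsym0} as the model. There $a=(\sigma k+1)p-\sigma k$, $d=-h\sigma(p-1)+1$, $c=\sigma(h+k)+1$, $t(S)=k+1$ and $\widetilde{\rm Frob(S)}=x_1x_k^{\sigma-1}x_{k+1}^{p-2}$. A short computation from the formulas for $a$ and $d$ yields $ha+kd=hp+k$, eliminating the product $\sigma(ha+kd)$; a second one yields $c(p-1)=a-d$, eliminating the remaining occurrences of $\sigma h(p-1)=1-d$ and of $cp$. Substituting these into $F(S)=\varphi(x_1x_k^{\sigma-1}x_{k+1}^{p-2})-a$ collapses everything to
\[
F(S)=-d\,(k+1)=-d\,t(S),
\]
a degree $\leq 2$ formula; as a check, $h=2,\sigma=1,p=2,k=3$ gives $S=\langle 5,6,7,8,9\rangle$ with $t(S)=4$ and $F(S)=4$. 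Running the same elimination in the other families produces, after the analogous simplifications, a linear or quadratic expression; for instance Theorem \ref{almostsym00} yields $F(S)=kd$.

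The main obstacle is precisely this elimination, which must be done separately in each family. In every case $a,d,c$ and $\varphi(\widetilde{\rm Frob(S)})$ are all \emph{bilinear} in the six table entries $s_\mu,s_{\mu+1},p_\mu,p_{\mu+1},r_\mu,r_{\mu+1}$, so the difficulty is algebraic rather than conceptual: one must choose, in each family, the right combination of the three identities $s_\mu p_{\mu+1}-s_{\mu+1}p_\mu=a$, $s_{\mu+1}r_\mu-s_\mu r_{\mu+1}=c$, $p_{\mu+1}r_\mu-p_\mu r_{\mu+1}=d$ so that every quadratic monomial $\sigma p,\ \sigma c,\ pc,\ \sigma r,\ pr$ appearing in $\varphi(\widetilde{\rm Frob(S)})$ is replaced by an affine expression in $a,d,c,k$, with the value of $t(S)$ fixing the one remaining free discrete parameter ($l$, or $k$ itself). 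Since the list of families is finite and each reduction terminates in a polynomial of degree at most $2$ in $a,d,c,k,t(S)$, the theorem follows.
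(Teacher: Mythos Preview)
Your proposal rests on a misreading of the theorem. By ``quadratic formula'' the paper does \emph{not} mean a polynomial of degree $\leq 2$ in $a,d,c,k,t(S)$; it means a closed expression obtained by solving a quadratic equation, i.e.\ one involving a square root. Your elimination scheme succeeds in the two model cases you chose precisely because those families (Theorems \ref{almostsym00} and \ref{almostsym0}) have only \emph{two} free internal parameters $(\sigma,p)$, so $(a,d,c)$ already overdetermines them and the mixed products cancel. In every family of Theorems \ref{almostsym1} and \ref{almostsym2}, however, there are three free internal parameters $(\sigma,p,r)$, the map $(\sigma,p,r)\mapsto(a,d,c)$ is generically two--to--one, and the Frobenius number is \emph{not} polynomial in the macroscopic data. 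Concretely, in Theorem \ref{almostsym1}(ii) with $k=3$, $h=1$ one finds (as the paper does) that $p$ satisfies
\[
3c\,p^{2}-(3c+4a-3d)\,p-a(a+3d-5)=0,
\]
whose discriminant $(3c+4a-3d)^{2}+12ca(a+3d-5)$ has total degree $3$ in $a,c,d$ and is therefore not a perfect square as a polynomial. Hence $p$, and with it $F(S)=2(p-1)c-2a$, is genuinely algebraic of degree $2$ over $\mathbb{Q}(a,d,c)$; no amount of manipulation with the identities $s_\mu p_{\mu+1}-s_{\mu+1}p_\mu=a$, $s_{\mu+1}r_\mu-s_\mu r_{\mu+1}=c$, $p_{\mu+1}r_\mu-p_\mu r_{\mu+1}=d$ will collapse it to a polynomial.

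What the paper actually proves, family by family, is this: one first writes $F(S)$ as an affine expression in $p$ (or $\sigma$) with coefficients in the generators, then derives from the parametrisation a single quadratic equation $A p^{2}+Bp+C=0$ whose coefficients $A,B,C$ are explicit polynomials in $a,a_1,a_k,c,k$ and the integer $l$ determined by $t(S)$; the positive root then gives $p$, hence $F(S)$, as a closed ``quadratic formula''. Your plan captures the first half of this (expressing $F(S)$ affinely in the internal parameters and invoking the classification), but the second half---the fact that one must stop at a quadratic \emph{equation} rather than hope for a polynomial \emph{identity}---is exactly the missing idea.
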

\begin{proof} We will use the notations and the proofs of section 5. At first we study the cases where $\widetilde{\rm Frob(S)}\in \widetilde{PF(S)_i} , \card(\widetilde{PF(S)_i})\geq 2$, for some $i=1,2$.

In  Theorem \ref{almostsym00} we have that $F(S)= kd$, so there is nothing to prove in this case. 

In  Theorem \ref{almostsym0} we have 
$F(S)= a_1+(\sigma -1 )a_k+ (p-2)c-a$. From Equation (\ref{neg-case1}) we get $a_2+(p -1)c=a+a_1 $. But 
 $x_{k+1}^{p } - x_0 x_k^{\sigma}\in I$, so we have $p  c=a+ \sigma a_k$. By a simple computation we get
 $$F(S)=3a_1-2a_2-a_k.$$
 So we can assume that  $\widetilde{PF(S)_i} =\{\widetilde{\rm Frob(S)}\}$ for some $i=1,2$. 

\item{I) }  Suppose $\widetilde{PF(S)_2} =\{\widetilde{\rm Frob(S)}\}$. We have by Theorem \ref{almostsym1} that    $S$ is almost symmetric with $k\geq 3, t(S)\geq 2$ if and only if we have either one of the following conditions.
\begin{enumerate} 
\item $ \widetilde{PF(S)_1} =\{x_{k+1}^{p-1}\}$, $ \widetilde{PF(S)_2} =\{x_1 x_k^\sigma x_{k+1}^{p-2}\}, t(S)=2$. 
 Since $S$ is almost symmetric we have  $x_{k+1 }^{2(p-1)}-x_0 \widetilde{\rm Frob(S)}\in I,$
 so 
  $F(S)=2(p-1)c- 2a$, hence determining a formula for $F(S)$ consists of determining $p$ in terms of $a,d,c,k$.\\ 
We have $x_{k+1 }^p - x_0x_{1 }x_k ^{\sigma}\in I$. 
So
\begin{equation}\label{for405} c p=a+a_{1}+\sigma a_k.\end{equation}
Since $a=(\sigma k+2)p-(\sigma k+1),$ we have  \begin{equation}\label{for406}   \sigma k(p -1)=a+1-2p.  \end{equation}
 We multiply (\ref{for405}) by $k(p -1)$ and by using (\ref{for406}) we get 
 \begin{equation}\label{for4066} k(p -1)p c=k(p -1)(a+a_{1 })+  a_k( a+1-2p).  \end{equation}
 So we get a second order equation in the variable $p$ 
 \begin{equation}\label{for4067} kcp^2 -(k(c+a+a_{1})-2a_k  )p  -   a_k( a +1)+   k (a+a_{1 })=0,  \end{equation}
 so $$p= \displaystyle \frac{k(c+a+a_1) -2a_k+ \sqrt{(k(c+a+a_{1})-2a_k  )^2- 4kc(-   a_k( a +1)+   k (a+a_{1 }))}}{2kc}.$$
 
\item $ \widetilde{PF(S)_1} =\{x_i  x_{k+1}^{p-1}\mid i=1,\dots ,k-l\}$,$ \widetilde{PF(S)_2} =\{x_1 x_k^{\sigma}x_{k+1}^{p-2}\}$, $t(S)=k-l+1$ and $ h=1$. 
Since $S$ is almost symmetric we have  $x_1 x_{k-l}x_{k+1 }^{2(p-1)}-x_0 \widetilde{\rm Frob(S)}\in I$, 
  so  $  x_{k-l+1}x_{k+1 }^{2(p-1)}- \widetilde{\rm Frob(S)}\in I$
and   $F(S)=2(p-1)c+a_{k-l+1} - a$,  hence determining a formula for $F(S)$ consists of determining $p$ in terms of $a,d,c,k, l$. 
We have $x_{k+1 }^p - x_{l }x_k ^{\sigma-1}\in I$, hence 
\begin{equation}\label{for37} cp=a_{l }+\sigma a_k -a_k.\end{equation}
Since $a=(\sigma k+2)p-((\sigma-1)k+l)$ we have  \begin{equation}\label{for38}  \sigma k(p -1)=a-2p-k+l.  \end{equation}
 We multiply (\ref{for37}) by $k(p -1)$ and by using (\ref{for38}) we get 
 \begin{equation}\label{for39} k(p -1)p c=k(p -1)a_{l }+  a_k( a-2p-k+l) -k(p -1)a_k. \end{equation}
 So we get a second order equation in the variable $p$ 
 \begin{equation}\label{for40} kcp^2 -(k(c+a_{l }-a_k )  -2a_k )p  -   a_k( a +l)+   k a_{l }=0,  \end{equation}
 so $p= \displaystyle \frac{k(c+a_{l }-a_k )  -2a_k+ \sqrt{(k(c+a_{l }-a_k )  -2a_k )^2- 4kc(-   a_k( a +l)+   k a_{l })}}{2kc}.$
\end{enumerate}

\item{II) } Suppose $\widetilde{PF(S)_1} =\{\widetilde{\rm Frob(S)}\}$. We have by Theorem \ref{almostsym2} that   $S$ is almost symmetric with $k\geq 3, t(S)\geq 2$ if and only if either 
\begin{enumerate}
 \item $ \widetilde{PF(S)_1} =\{x_1  x_k^{\sigma}  x_{k+1}^{p-1 }\}$, $ \widetilde{PF(S)_2} =\{x_2  x_k^{\sigma } ,\dots ,x_{l+1}  x_k^{\sigma}  \}$, $t(S)=l+1 , h=1,l\leq k-3$. 
Since $S$ is almost symmetric we have  $x_2  x_{l+1}  x_k^{2\sigma}-x_0 \widetilde{Frob(S)}\in I$, so 
 we have $   x_{l+3}  x_k^{2\sigma}-  \widetilde{\rm Frob(S)}\in I$
and   $F(S)=a_{l+3}+2\sigma a_k- a$. On the other hand  $x_{l+2}  x_k^{\sigma}-x_0x_{k+1 }^{p-1} \in I $, 
 so 
\begin{equation}\label{for45}a_{l+2}+\sigma a_k=a+ c(p-1).\end{equation} Consequently   
  $F(S)=a_{l+3}+2(a+ c(p-1)- a_{l+2} )- a$. Determining a formula for $F(S)$ consists of determining $p $ in terms of $a,d,c,k,l$.
 Since $a=(\sigma k+l+2) p-l(p-1)$ we have \begin{equation}\label{for46}   \sigma kp=a-2p-l.  \end{equation}
 We multiply (\ref{for45}) by $kp$ and by using (\ref{for46}) we get 
 \begin{equation}\label{for47} ka_{l+2}p+k\sigma a_k p=kap+ kcp(p-1). \end{equation}
 So we get a second order equation in the variable $p$ 
 \begin{equation}\label{for401} kcp^2 -(k(c-a+a_{l+2})-2a_k  )p  -   a_k( a-l)=0, \end{equation}
 so $p= \displaystyle \frac{k(c-a+a_{l+2})-2a_k  + \sqrt{(k(c-a+a_{l+2})-2a_k  )^2+ 4kc a_k( a-l)}}{2kc}.$
 
\item $ \widetilde{PF(S)_1} =\{x_1  x_k^{\sigma -1 }  x_{k+1}^{p}\}$, $ \widetilde{PF(S)_2} =\{x_2  x_k^{\sigma -1} ,\dots ,x_{k -1}  x_k^{\sigma -1 }  \}$, $t(S)=k-1$ and $h = 1$.  
Since $S$ is almost symmetric we have $x_2 x_{k -1} x_k^{2\sigma-2}-x_0 \widetilde{\rm Frob(S)}\in I$
 that is $   x_{1}  x_k^{2\sigma-1}- x_0 \widetilde{\rm Frob(S)}\in I.$
So  $F(S)=a_{1}+2\sigma a_k-a_k- 2a$. On the other hand  $   x_k^{\sigma}-x_0x_{k+1 }^{p-1} \in I $, 
 so we have 
\begin{equation}\label{for451} \sigma a_k=a+ c(p-1).\end{equation} Consequently   
  $F(S)=a_{1}+2(a+ c(p-1))-a_k- 2a$. Determining a formula for $F(S)$ consists of determining $p $ in terms of $a,d,c,k$.
 Since $a=(\sigma k)(p+1)-(k-2)p$ we have \begin{equation}\label{for461}   \sigma k p=a +kp-2p-k-2. \end{equation}
 We multiply (\ref{for451}) by $kp$ and by using (\ref{for461}) we get 
 \begin{equation}\label{for471}  a_k( a +kp-2p-k-2)=kap+ kcp(p-1). \end{equation}
So we get a second order equation in the variable $p$ 
 \begin{equation}\label{for402} kcp^2 -(k(c-a+a_k)-2a_k  )p  -   a_k( a-k+2)=0,  \end{equation}
 and $p= \displaystyle \frac{k(c-a+a_k)-2a_k   + \sqrt{(k(c-a+a_k)-2a_k )^2+ 4kc  a_k( a-k+2)}}{2kc}.$
 
\item  $ \widetilde{PF(S)_1} =\{x_1  x_k^{\sigma -1 }  x_{k+1}^{p}\}$, $ \widetilde{PF(S)_2} =\{x_2  x_k^{\sigma -1} ,\dots , x_k^{\sigma }  \}$, $t(S)=k$.  
Since $S$ is almost symmetric we have  $x_2 x_k^{2\sigma-1}-x_0 \widetilde{\rm Frob(S)}\in I$, 
 so we have  $F(S)=a_{2}+2\sigma a_k-a_k- 2a$. On the other hand $   x_1x_k^{\sigma}-x_0^{h+1}  x_{k+1 }^{p-1} \in I $, 
 so we have 
\begin{equation}\label{for571} a_1+\sigma a_k=(h+1)a+ c(p-1)\end{equation} and  
  $F(S)=a_{2}+2(a+ c(p-1)-a_1)-a_k- 2a$. Determine a formula for $F(S)$ consist to determine $p $ in terms of $a,d,c,k$.
 By developing the formula $a=(\sigma k+1)(p+1)-(k-1)p$ we have \begin{equation}\label{for581}   \sigma k p=a+p(k-2)-k+1.  \end{equation}
 We multiply (\ref{for571}) by $kp$ and by using (\ref{for581}) we get 
 \begin{equation}\label{for481} ka_1p+ a_k(a+p(k-2)-k+1)=k(h+1)ap+ kcp(p-1). \end{equation}
So we get a second order equation in the variable $p$ 
 \begin{equation}\label{for491} kcp^2 -(k(a_1+c-(h+1)a+a_k)-2a_k)p  -   a_k( a-k+1)=0,  \end{equation} and
 $$p= \displaystyle \frac{k(a_1+c-(h+1)a+a_k)-2a_k + \sqrt{(k(a_1+c-(h+1)a+a_k)-2a_k)^2+ 4kc  a_k( a-k+1)}}{2kc}.$$   
             
\item $ \widetilde{PF(S)_1} =\{x_{k+1}^{p}\}$, $ \widetilde{PF(S)_2} =\{x_1 ,\dots ,x_k \}$, $t(S)=k+1$.
We have 
$F(S)= p c-a$ and $a =k+p+1$ so $F(S)= c( a-k-1)-a$. 

\item $ \widetilde{PF(S)_1} =\{x_1  x_k^{\sigma - 2 }  x_{k+1}^{p}\}$, $ \widetilde{PF(S)_2} =\{x_1  x_k^{\sigma -1 }  \}$, $t(S)=2$. 
Since $x_1^2 x_k^{2\sigma-2}-x_0 \widetilde{Frob(S)}\in I$ and  $x_1^2-x_0 x_2\in I$ 
 we have $F(S)=a_{2}+2\sigma a_k-2a_k- a$. From $ x_1x_k^{\sigma}-x_0 x_{k+1 }^p \in I $,  
 we get 
\begin{equation}\label{for58} a_1+\sigma a_k=a+ c(p-1).\end{equation} Hence  
  $F(S)=a_{2}+2(a+ c(p-1)-a_1 )-2a_k- a$. Determine a formula for $F(S)$ consist to determine $p $ in terms of $a,d,c,k$.
  
  By developing the formula  $a=( \sigma k+1)(p+1)-(2k-1)p$ we have \begin{equation}\label{for59}   \sigma k p=a+p(2k-2)-2k+1 . \end{equation}
 We multiply (\ref{for58}) by $kp$ and by using (\ref{for59}) we get 
 \begin{equation}\label{for60} ka_1p+ a_k(a+p(2k-2)-2k+1)=kap+ kcp(p-1). \end{equation}
So we get a second order equation in the variable $p$ 
 \begin{equation}\label{for61} kcp^2 -(k(c+d+2a_k)-2a_k)p  -   a_k( a-2k+1)=0 , \end{equation} and 
 $p= \displaystyle \frac{k(c+d+2a_k)-2a_k+ \sqrt{(k(c+d+2a_k)-2a_k)^2+ 4kc  a_k( a-2k+1)}}{2kc}.$
 \end{enumerate}
\end{proof}

\begin{remark}{\rm Given a AAG-semigroup $S$ with data $a,d,c,h,k$  we can determine if $S$ is almost symmetric satisfying the condition $r_\mu\geq h$ or $\rho _\mu=0$ by using the Euclidean algorithm for the GCD of three numbers as implemented in \cite{Mo3}, which gives us the table described in Lemma \ref{equations-s-p}. The table gives us the sequences $s_i,p_i,r_i, q_i$ and the number $\mu $, so that we can use our theorems in section 5 to check if the semigroup is almost symmetric or not. A faster algorithm consists of using Theorem \ref{almostsym1-2}, which 
 gives us  a system of $2k+2$ formulas for $p$, if $p>1$ is a natural number solution of one of them, then we also have the probably type of $S$, so the number $l$, which allows us to determine the possible case  of section 5. By using the formulas for $a$ in the corresponding case of section 5 we can find $\sigma $, and if   $\sigma $ is a natural number, then eventually solve the natural number $r$ from the formula for $d$ and finally check if we have the right value of $c$.}
\end{remark}
\begin{example}We have implemented the above algorithm. For 
$$150\leq   a\leq  165, -5\leq   d\leq 10,170  \leq   c\leq 186, 19 \leq k\leq  20,1\leq h\leq 4,$$ the  AAG-semigroup  such that $r_\mu\geq h$  is almost symmetric  
if and only if either
\item $1)\ a=  155, d= 1 ,  c= 177,  k=  20,  h=  4$. \\
 Theorem \ref{almostsym1} case 1),  $ p=  8, \sigma= 1, r = -1,  t(S)= 2$, or
\item $2)\ a=  163, d=  -2 ,  c= 170,  k=  19,  h=  1$. \\
 Theorem \ref{almostsym1} case 2),  $l = 14, p=  3, \sigma= 4, r = -2,  t(S)= 6$, or
\item $3)\ a=  165,d=  4, c= 170,k=  19, h=  1$. \\
  Theorem \ref{almostsym2} case 1),  $l=5, p=  4, \sigma= 2,r = -4, t(S)=  6$, or
\item $4)\ a=  165, d=  -2 ,  c= 174,  k=  19,  h=  1$. \\
Theorem \ref{almostsym1} case 2),  $l = 12, p=  3, \sigma= 4, r = -2,  t(S)= 8$, or
\item $5)\ a=  163, d=  7,  c= 179, k=  19, h=  1$. \\
Theorem \ref{almostsym2} case 5),  $p=  6, \sigma= 3, r= -5, t(S)=  2$, or
\item $6)\ a=  165, d=  7,  c= 183, k=  19, h=  3$.\\ 
Theorem \ref{almostsym2} case 3),   $p=  7, \sigma= 2, r = -7, t(S)=  19$, or 
\item $7)\ a=  165, d=  -1, c= 186, k=  19, h=  4$. \\
Theorem \ref{almostsym2} case 3),  $p=  7, \sigma= 2, r = -8, t(S)=  19$. 
\end{example}

\end{document}